\newcommand{\be}{\begin{eqnarray}}
\newcommand{\ben}{\begin{eqnarray*}}
\newcommand{\en}{\end{eqnarray}}
\newcommand{\enn}{\end{eqnarray*}}
\newtheorem{theorem}{Theorem}[section]
\newtheorem{lemma}[theorem]{Lemma}
\newtheorem{corollary}[theorem]{Corollary}
\newtheorem{assumption}{Assumption}
\newtheorem{remark}[theorem]{Remark}
\newtheorem{proposition}[theorem]{Proposition}
\begin{document}
\renewcommand{\theequation}{\arabic{section}.\arabic{equation}}

%\title[Near-field imaging of locally rough surfaces]
\title{\bf Elastic scattering problems by penetrable obstacles with embedded objects}

 \author{Chun Liu\thanks{School of Mathematical Sciences and LPMC, Nankai University, Tianjing 300071, China({\tt liuchun@nankai.edu.cn})}
 \and
 Jiaqing Yang\thanks{School of Mathematics and Statistics, Xi'an Jiaotong University,
 Xi'an, Shaanxi 710049, China ({\tt jiaq.yang@mail.xjtu.edu.cn})}
\and
Bo Zhang\thanks{LSEC, NCMIS and Academy of Mathematics and Systems Science, Chinese Academy
of Sciences, Beijing 100190, China and School of Mathematical Sciences, University of Chinese
Academy of Sciences, Beijing 100049, China ({\tt b.zhang@amt.ac.cn})}
}
\date{}

\maketitle

%\vspace{.2in}

\begin{abstract}
This paper considers 3-D elastic scattering problems by penetrable obstacles with embedded objects. The well-posedness of transmission
 problem is proved by employing integral equation method. Then the Inverse Problems  , which is to recover the obstacle
 by the far-field pattern measurement, is considered. It is shown that the inhomogeneous penetrable obstacle can be
 uniquely determined from the far-field pattern at a fixed frequency.

\vspace{.2in}

{\bf Keywords:} Inverse elastic scattering, inhomogeneous,
 integral equation.

\end{abstract}

\maketitle

\section{Introduction}

Elastic wave theory has a variety of applications in geophysics, nondestructive detection and seismology. The inverse scattering problem is to reconstruct the physical properties of elastic medium or to detect cracks in solids via the collection of wave fields, such as nondestructive evaluation of concrete structures \cite{JS1958,JH1989}, earthquake prediction and oil exploration \cite{JP1977,HP2008}.  

In this work we place the obstacle in a three-dimensional homogeneous and isotropic elastic
medium and we assume that it is penetrable (a so-called inclusion ) with some embedded objects. We consider as incident wave an elastic longitudinal or transversal wave that after interacting with the boundary of the medium is split into an interior and a scattered wave, propagating in the inclusion and the exterior, respectively. The scattered wave is also decomposed into a longitudinal and a transversal wave with different wave numbers which  makes the study of elastic wave more difficult. 

Before considering the Inverse Problems , we should have a good knowledge of the  \emph{direct problem}, which is to find the scattered field and its far-field patterns from the knowledge of the obstacle and the incident wave. In the case that there are no embedded objects, the direct problem is linear and well posed for smooth obstacles in 2-D\cite{MARTIN1990ON}, in 3-D \cite{Charalambopoulos2002On,Charalambopoulos2007On} by the boundary integral method or the variational method. If there exists embedded objects, the direct problem is also well-posed in acoustics and electromagnetics \cite{Liu2010Direct, Liu2010The,Yang2017Uniqueness}. The  \emph{Inverse Problems } is to find the shape and the position of the inclusion from the knowledge of the far-field patterns.

In the case that there are no embedded objects, many uniqueness results have been obtained in determining the inclusion in acoustics and electromagnetics. The first uniqueness result was established by Isakov\cite{Isakov1990On} in 1990, and the idea is to construct singular solutions of the scattering problem with respect to two different inclusions with same far-field patterns. Then \cite{KK1993} simplified Isakov's method by using the integral method to establish a priori estimate of the solution on interface and it was also extended to the case of impenetrable obstacles. Since then, the idea has been applied to establish uniqueness results for many other inverse
scattering problems\cite{Isakov2008On,Liu2010Direct, Liu2010The,Liu2010uni,Liu2009A,Liu2012in}(and the references therein). In elasticity, there are many uniqueness results about impenetrable obstacle, we refer to \cite{peter1993uniqueness,Elschner2010,Xiao2019,Xiao2017,Gintides2012,Sini2015} for an incomplete list. But for penetrable obstacles, there are little results as far as we know. \cite{Peter1993A,Peter2002} show the uniqueness of the density if the inclusion and the background medium are the same. \cite{JG2021} proved that the far-field patterns for all incident plane waves can uniquely determine the penetrable obstacle based on the mixed reciprocity relations.

In the case that there exists embedded objects, \cite{Liu2010Direct} proved the uniqueness determination of inclusion in acoustic with a known refractive index and \cite{Liu2010The} got the same result for electromagnetic waves with a known refractive index. But in \cite{Yang2017Uniqueness}, it gave a new method to establish uniqueness results for determining inclusion from the knowledge of the acoustic or electric far-field measurement. It's based on constructing a well-posed interior transmission problem in a small domain inside inclusion associated with Helmholtz or Maxwell equations. In this paper, inspired by idea in \cite{Yang2017Uniqueness}, we consider the inverse transmission problem for elastic waves. We use boundary integral equation method to get the well-posedness of the direct problem. Different form acoustics and electromagetics, integral kernels in elasticity are hyper-singular, so we introduce the  \emph{symbol} of integral system to overcome the hyper singularity to obtain the well-posedness. Moreover, we get a  $L^p(4/3<p<2)$ estimate of the solution. Then for Inverse Problems , we use the 
method in \cite{Yang2017Uniqueness} by choosing a proper incident point source.

The paper is organized as follows: In Section \ref{sec1}, we formulate the problem in three dimensions and in Section \ref{sec2} we present the direct scattering problem, the elastic potential and the equivalent system of integral equations. The Inverse Problems  is stated in Section \ref{sec3} where we introduce the  \emph{interior transmission problem} and the \emph{modified interior transmission problem}, and prove some important results which are essential for the proof of Inverse Problems  . 

\section{Problem formulation}\label{sec1}
Consider the scattering of a time-harmonic elastic plane wave $ u^{inc} $ by an inhomogeneous penetrable
obstacle which may contain some embedded objects.
\begin{figure}[htbp]
\centering
\includegraphics[scale=0.5]{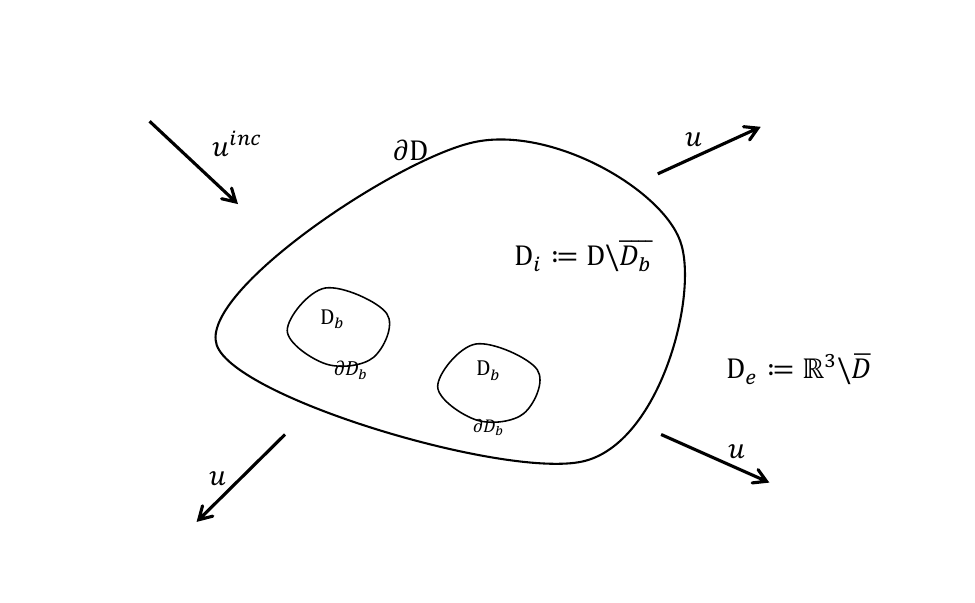}
\caption{ Scattering by inhomogeneous medium with embedded objects.}\label{f1}
\end{figure}

Here, $D\subset \mathbb{R}^3$ is a bounded open set with a $ C^2-  $boundary $\partial D$ and $D_b$ are bounded domains which are separate from each other. Let $D_i:= D\setminus \overline{D_b}$ be the inhomogeneous medium with Lam$\acute{e}$ constants $\lambda_i$, $\mu_i$ and density $\rho(x)\in  \mathbb{R}_{+} \cap L^{\infty}( D_i)$. Let $D_e:=\mathbb{R}^3 \setminus \overline{D}$ be the homogeneous medium with Lam$\acute{e}$ constants $\lambda_e$, $\mu_e$ and density $\rho_e$, which without loss of generality can be taken equal to one. In addition, parameters $\lambda_\alpha,\mu_\alpha \ (\alpha=i,e)$ satisfy $\mu_\alpha>0$ and $2\mu_\alpha+3\lambda_\alpha>0$.
 
Then, the propagation of $ u^{inc} $ can be described by following Navier equations
\begin{align}
\label{eq1} \mu_i\Delta v(x)+(\lambda_i+\mu_i)\nabla(\nabla\cdot v(x))+ \rho(x)\omega^2v(x)=&0, &&x\in D_i,\\
\label{eq2} \mu_e\Delta u(x)+(\lambda_e+\mu_e)\nabla(\nabla\cdot u(x))+ \omega^2 u(x)=&0,  &&x\in D_e,
\end{align} 
where $ v(x)  $ is the stationary field and $u(x)  $ is the scattered field, $\omega>0 $ is the angular  frequency. To assure
the continuity of displacement and stress through the surface $\partial D$, we need the following interfacial conditions
\begin{align}
\label{bd1} v(x)-u(x)= f(x), \qquad T_i v(x)-T_e u(x)=h(x), \qquad x\in \partial D , 
\end{align}
where  $f(x)=u^{inc}(x)$ and $h(x)=T_e u^{inc}(x)$.  We mention here that
 $T_\alpha (\alpha=i,e)$ is the surface stress (or traction) operator on the interior or exterior of the
 boundary $\partial D$
\begin{equation}\label{surface operator}
  T_\alpha = 2 \mu_\alpha \textbf{n} \cdot \text{grad}+\lambda_\alpha \textbf{n}\text{div} + \mu_\alpha \textbf{n} \times \text{curl} \qquad (\alpha=i,e),
\end{equation}
where $ \textbf{n}=(n_1,n_2,n_3)$ is unit normal vector on the boundary $\partial D$ at the point $x=(x_1,x_2,x_3)$ directed into $D_e$. For rigid bodies $D_b$, $ v(x) $ satisfies the first kind (Dirichlet) boundary condition
\begin{align}
 \label{bd3} v(x)= 0, \qquad x\in\partial D_b.
\end{align} 
The scattered field $u(x)$ can be decomposed into the sum $u(x)=  u_p(x)+ u_s(x)$, where 
\begin{equation}
 \nonumber
 u_p=-(k_e^p)^{-2}\nabla (\nabla\cdot u),\ \ u_s=-(k_e^s)^{-2}\nabla\times(\nabla\times u).
 \end{equation}
Here $u_p$ and $u_s$ are referred to as the longitudinal and transversal parts of the field $u$ respectively, satisfying the vector Helmholtz equation. Moreover, the wave numbers $k_e^p,  k_e^s>0$ are respectively given by
\begin{equation}
\nonumber
  k_e^p:=\frac{\omega}{\sqrt{2\mu_e+\lambda_e}},\ \ k_e^s:=\frac{\omega }{\sqrt{\mu_e}}.
\end{equation}
In this paper the incident wave is a plane pressure wave taking the form
\begin{equation}
\nonumber
u^{inc}(x)=d e^{ik_e^p d\cdot x},\qquad d\in  \mathbb{S}^2
\end{equation}
with the incident direction $d$. At last, the scattered field $u$ is required to satisfy the Kupradze radiation condition \cite{Kupradze}
\begin{align}
\label{rd} \lim_{r=|x|\rightarrow \infty }r\left[\frac{\partial u_\alpha(x)}{\partial r}-i k_e^\alpha u_\alpha(x)\right]=0,\qquad \alpha=p,s,
\end{align}
uniformly in all directions $\hat{x}=x/{|x|}\in \mathbb{S}^2$. The radiation conditions in (\ref{rd}) lead to the longitudinal part $u_p^{\infty}$ and the transversal part $u_s^{\infty}$ of the far-field pattern of $u$, given by the asymptotic behavior
\begin{align}
\label{far} u(x)=\frac{e^{i k_e^p|x|}}{4\pi(\lambda_e+\mu_e)|x|}u_p^{\infty}(\hat{x})+\frac{e^{i k_e^s|x|}}{4\pi\mu_e|x|}u_s^{\infty}(\hat{x})+\mathit{O}(\frac{1}{|x|^2}),\qquad |x|\rightarrow \infty,
\end{align}
where $ u_p^{\infty}(\hat{x}) $ is normal to $ \mathbb{S}^2 $ and $ u_s^{\infty}(\hat{x}) $ is tangential to $ \mathbb{S}^2 $, see \cite{Alves2002on} for instance. In this paper, we define the far-field pattern $ u^{\infty}(\hat{x}) $of the scattered field $u$, that is
\begin{equation}
\nonumber
u^{\infty}(\hat{x}):=u_p^{\infty}(\hat{x})+u_s^{\infty}(\hat{x}).
\end{equation}

In the case $\mu_i=\mu_e=\mu>0$ and $\lambda_i=\lambda_e=\lambda$ satisfies $3\lambda+2\mu>0$, then the problem (\ref{eq1})-(\ref{rd}) becomes the following scattering system
\begin{alignat}{4}
\label{m1} \Delta^* v(x)+ \rho(x)\omega^2 v(x)=&0,\  &x&\in \mathbb{R}^3 \setminus \overline{D_b},\\
\label{m2} v(x)=v^{sc}(x)+u^{inc}(x)&,\              &x&\in \mathbb{R}^3 \setminus \overline{D}, \\      
\label{mbd} v(x)=&0, \ &x&\in\partial D_b,\\
\label{mrd} \lim_{r=|x|\rightarrow \infty }r\left[\frac{\partial v^{sc}_\alpha(x)}{\partial r}-i k_e^\alpha v^{sc}_\alpha(x)\right]=&0,\ &\alpha &=p,s.
\end{alignat}
where $\Delta^*=\mu\Delta +(\lambda+\mu)\nabla(\nabla\cdot)$ and $v^{sc}(x)=  v_p(x)+ v_s(x)$.

In condition (\ref{rd}) and (\ref{mrd}), by Helmholtz decomposition theorem, we denoted by $v_p$ and $v_s$
 to be the longitudinal and transversal parts of the scattered field $v^{sc}$ 
 \begin{equation}
 \nonumber
 v_p=-(k_e^p)^{-2}\nabla (\nabla\cdot v^{sc}),\ \ v_s=-(k_e^s)^{-2}\nabla\times(\nabla\times v^{sc}).
 \end{equation}

The free space fundamental solution of the Lam\'{e} system in $\mathbb{R}^3$ is given by
\begin{equation}\label{fundamental}
  \Gamma_e(x,y):=\frac{(k_e^s)^{2}}{4\pi\omega ^2}\frac{e^{ik_e^s|x-y|}}{|x-y|}\emph{I}
  +\frac{1}{4\pi\omega^2}\nabla_x\nabla_x^{\mathsf T}\left[\frac{e^{ik_e^s|x-y|}}{|x-y|}-\frac{e^{ik_e^p|x-y|}}{|x-y|}\right]
\end{equation}
where $'\mathsf T'$ denotes transposition, \emph{I} is the identity matrix and
$\Phi_p(x,y)=\frac{e^{ik_e^p|x-y|}}{4\pi|x-y|}$ is fundamental solution to the Helmholtz equation.
\section{The direct scattering problem}\label{sec2}
\setcounter{equation}{0}
The direct scattering problem is to find the field $(v,u)$(or $v$) satisfying the conditions (\ref{bd1})-(\ref{rd})(or (\ref{mbd})-(\ref{mrd})) with given scatterer $(D, \rho, D_b)$ and incident plane wave $u^{inc}$. In this section, we deal with the well-posedness of the direct transmission problem (\ref{eq1})-(\ref{rd}) and medium scattering problem (\ref{m1})-(\ref{mrd}). Also, we establish a priori estimate of solution for both problems with boundary data in $L^p (1< p \leq 2)$ which are necessary later in the uniqueness proof of the Inverse Problem .
\begin{theorem}
   The transmission problem (\ref{eq1})-(\ref{rd}) has at most one solution.
\end{theorem}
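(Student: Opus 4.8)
The plan is to argue by linearity. If $(v_1,u_1)$ and $(v_2,u_2)$ are two solutions sharing the same data $(f,h)$ and the same incident field, then the difference $(v,u):=(v_1-v_2,u_1-u_2)$ solves the \emph{homogeneous} transmission problem: equations (\ref{eq1})--(\ref{eq2}) hold, the interfacial relations (\ref{bd1}) become $v-u=0$ and $T_iv-T_eu=0$ on $\partial D$, the rigid condition (\ref{bd3}) reads $v=0$ on $\partial D_b$, and $u$ satisfies the radiation condition (\ref{rd}). It therefore suffices to show that $v\equiv0$ in $D_i$ and $u\equiv0$ in $D_e$.

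First I would apply Betti's (second Green's) identity for the Lamé operator on $D_i$. Using the Navier equation (\ref{eq1}) and the fact that the interior strain-energy density $e_i(v,\overline{v})$ is real and nonnegative, the volume term $\int_{D_i}[\,e_i(v,\overline{v})-\rho\omega^2|v|^2\,]\,dx$ is real; since $v=0$ on $\partial D_b$, the boundary contribution reduces to $\partial D$, yielding $\mathrm{Im}\int_{\partial D}\overline{v}\cdot T_i v\,ds=0$ with the normal pointing into $D_e$. Invoking the homogeneous transmission relations $v=u$ and $T_iv=T_eu$ on $\partial D$ transfers this to the exterior trace, giving $\mathrm{Im}\int_{\partial D}\overline{u}\cdot T_e u\,ds=0$. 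Applying Betti's identity to $u$ on the shell $B_R\setminus\overline{D}$ for a large ball $B_R\supset\overline{D}$ and again discarding the real volume term, I obtain $\mathrm{Im}\int_{|x|=R}\overline{u}\cdot T_e u\,ds=0$ for all large $R$.

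The next step is to let $R\to\infty$. Decomposing $u=u_p+u_s$ and inserting the Kupradze radiation condition (\ref{rd}) together with the asymptotics (\ref{far}), the sphere integral converges to a combination $c_p\|u_p^\infty\|_{L^2(\mathbb S^2)}^2+c_s\|u_s^\infty\|_{L^2(\mathbb S^2)}^2$ with $c_p,c_s>0$; hence $u_p^\infty=u_s^\infty=0$. Since $u_p$ and $u_s$ are radiating solutions of vector Helmholtz equations with wavenumbers $k_e^p$ and $k_e^s$, Rellich's lemma forces $u_p=u_s=0$, so $u\equiv0$ in the unbounded component of $D_e$, and unique continuation gives $u\equiv0$ throughout $D_e$. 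Feeding this back into the homogeneous relations on $\partial D$ yields $v=u=0$ and $T_iv=T_eu=0$ there, i.e. $v$ carries vanishing Cauchy data on $\partial D$. The unique continuation principle for the Lamé system in $D_i$ then propagates $v\equiv0$ from a neighborhood of $\partial D$ throughout the connected domain $D_i$, which completes the argument.

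The hard part is the passage to the limit on the large sphere: unlike the scalar acoustic case, the elastic field carries two coexisting wave speeds, so one must separate the longitudinal and transversal contributions in $\mathrm{Im}\int_{|x|=R}\overline{u}\cdot T_e u\,ds$ and verify that the cross terms between $u_p$ (radial in the far field) and $u_s$ (tangential) do not survive in the limit, so that the imaginary part indeed dominates $\|u_p^\infty\|$ and $\|u_s^\infty\|$ with positive coefficients. A secondary subtlety is the interior step: because $\rho\in L^\infty$ is only bounded, one must appeal to a unique continuation theorem for second-order elliptic systems valid at this regularity rather than to Holmgren's theorem.
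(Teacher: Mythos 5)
Your proposal is correct and follows essentially the same route as the paper: both arguments reduce to the homogeneous problem, use Betti's (Gauss') identity to show that the imaginary part of the flux $\int_{\partial D}\overline{u}\cdot T_e u\,ds$ vanishes via the transmission conditions and the Dirichlet condition on $\partial D_b$, transfer this to the large sphere, invoke the Kupradze radiation condition together with the decay of the $u_p$--$u_s$ cross terms to conclude $\lim_{R\to\infty}\int_{S_R}|u_p|^2=\lim_{R\to\infty}\int_{S_R}|u_s|^2=0$, apply Rellich's lemma, and finish with a unique continuation principle (the paper cites H\"ormander, consistent with your remark about the $L^\infty$ density) for the interior field. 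The two subtleties you flag at the end are exactly the ones the paper handles via the asymptotics $\overline{u_p}u_s=\mathit{O}(|x|^{-3})$ and the regularity-robust unique continuation theorem.
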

\begin{proof}
Let $u^{inc}(x)=0$, which means $f(x)=h(x)=0$. Suppose $(v,u)$ is a solution of the transmission problem
(\ref{eq1})-(\ref{rd}), then from radiation condition (\ref{rd}), we have the following results \cite[p.127]{Kupradze}
\begin{align}
 \label{asym1} u_p(x)= \mathit{O}(|x|^{-1}), \qquad u_s(x)= \mathit{O}(|x|^{-1}), \\
  T_e u_p(x)-ik_p(\lambda_e+2\mu_e)u_p(x)= \mathit{O}(|x|^{-2}), \\
  T_e u_s(x)-ik_s\mu_eu_s(x)= \mathit{O}(|x|^{-2}), \\
 \label{asym2} \overline{u_p(x)}u_s(x)= \mathit{O}(|x|^{-3}), \qquad u_p(x)\overline{u_s(x)}= \mathit{O}(|x|^{-3}).
\end{align}
when $|x|\rightarrow\infty$.% Here $u_p(x)=-(k_e^p)^{-2}\nabla (\nabla\cdot u ), u_s(x)=-(k_e^s)^{-2}\nabla\times(\nabla\times u)$.

Set $B_R:=\{x\in\mathbb{R}^3: |x|<R\}$ is a ball containing $D$, $S_R :=\{x\in\mathbb{R}^3: |x|=R\}$ and $D_R:=B_R\setminus D$.
Then we apply Gauss' theorem in $D_R$ to get
\begin{equation}\label{int1}
\begin{split}
&\int_{D_R}\left( \overline{u(x)}\Delta_e^*u(x)-u(x)\Delta_e^*\overline{u(x)}\right) {\rm d}  x\\
  =&\int_{\partial D}\left( \overline{u(x)}T_e u(x)-u(x)T_e\overline{u(x)}\right) {\rm d} s(x)-\int_{S_R}\left( \overline{u(x)}T_e u(x)-u(x)T_e\overline{u(x)}\right) {\rm d} s(x).
\end{split}
\end{equation}
Here, $\Delta_e^*=\mu_e\Delta +(\lambda_e+\mu_e)\nabla(\nabla\cdot)$. Set
\begin{equation}
\nonumber
 A u_p(x):=T_e u_p(x)-ik_p(\lambda_e+2\mu_e)u_p(x),\quad B u_s(x):=T_e u_s(x)-ik_s\mu_eu_s(x).
\end{equation} 
Then direct calculation can obtain
\begin{align*}
 & \int_{\partial D}\left(\overline{u(x)}T_e u(x)-u(x)T_e\overline{u(x)}\right) {\rm d} s(x)=\int_{S_R}\left( \overline{u(x)}T_e u(x)-u(x)T_e\overline{u(x)}\right) {\rm d} s(x) \\
  = & \int_{S_R} \left\lbrace \overline{[u_p(x)+u_s(x)]}A u_p(x)+\overline{[u_p(x)+u_s(x)]}B u_s(x)\right\rbrace  {\rm d} s(x)\\
 & -\int_{S_R}\left\lbrace[u_p(x)+u_s(x)]\overline{A u_p(x)}+[u_p(x)+u_s(x)]\overline{B u_s(x)}\right\rbrace  {\rm d} s(x)\\
 & + 2ik_p(\lambda_e+2\mu_e)\int_{S_R}|u_p(x|^2{\rm d} s(x) + 2ik_s\mu_e \int_{S_R}|u_s(x|^2{\rm d} s(x)\\
  &+2i[k_s\mu_e+k_p(\lambda_e+2\mu_e)]\int_{S_R}\Re(u_p(x)\overline{u_s(x)}){\rm d}s(x)
\end{align*}
From interfacial conditions (\ref{bd1}) and boundary condition (\ref{bd3}), the left side of the above equation is
\begin{align*}
 & \int_{\partial D}\left( \overline{u(x)}T_e u(x)-u(x)T_e\overline{u(x)}\right) {\rm d}s(x)\\
= & \int_{\partial D}\left( \overline{u(x)}T_i u(x)-u(x)T_i\overline{u(x)}\right) {\rm d}s(x)+\int_{\partial D_b}\left( \overline{u(x)}T_i u(x)-u(x)T_i\overline{u(x)}\right) {\rm d}s(x)\\
= & \int_{\partial D_i}\left\lbrace \overline{u(x)}\left[ \Delta_i^*u(x)+ \rho \omega^2u(x)\right] -u(x)\left[ \Delta_i^*\overline{u(x)}+\rho \omega^2\overline{u(x)}\right] \right\rbrace {\rm d}s(x)\\
=&0.
\end{align*}
Together with (\ref{asym1})-(\ref{asym2}), it's easy to obtain
\begin{equation}\label{rel}
 \lim_{R\rightarrow \infty}\int_{S_R}|u_p(x)|^2{\rm d}s(x)=0,\quad  \lim_{R\rightarrow \infty}\int_{S_R}|u_s(x)|^2{\rm d}s(x)=0.
\end{equation}
Since $u_p(x), u_s(x)$ satisfy vector Helmholtz equation,   Relic lemma \cite{Colton2013} tells us that $u_p(x)= u_s(x)=0$ in $D_e$.
Then it is obvious that $u(x)=T_e u(x)=0$ on $\partial D$. According to interfacial conditions (\ref{bd1}), we can get $v(x)=T_i v(x)=0$ on $\partial D$.

In order to show that $v(x)=0$ in $D_i$, we employ a unique continuation principle. We choose a continuously differentiable
 extension of $\rho (x)$ into $\overline{B_R}$ and define $v(x)=0$ for $x\in B_R\setminus D$. Then
$v\in H^1(B_R\setminus \overline{D_b})$ is a weak solution to $\Delta_i^* v+\rho(x)\omega^2 v=0 $ in $B_R\setminus \overline{D_b}$
and therefore it is in $H^2(B_R\setminus \overline{D_b})$ by the usual regularity theorems. Now we apply the unique
continuation principle from \cite{Hormander1985The} to get $v(x)=0$ in $D_i$.
\end{proof}

To show existence we formulate the transmission problem (\ref{eq1})-(\ref{rd}) into systems of volume  and surface integral equations. Here, we recall the definitions of elastic single- and double-layer potentials
\begin{align*}
  (S_e\varphi)(x) & := \int_{\partial D}\Gamma_e(x,y)\varphi(y){\rm d}s(y) , &&  x\in D_e\setminus\partial D,\\
    (V_e\varphi)(x) & := \int_{\partial D}T_{e,y}[\Gamma_e(x,y)]^{\mathsf T}\varphi(y){\rm d}s(y) , && x\in D_e\setminus\partial D,
\end{align*}
where $\Gamma_e(x,y)$ is the fundamental solution of $\Delta_e^* u+\omega^2 u=0 $ and $T_{e,y}$ means $T_e$ applied at $y\in \partial D$. It is well known that $S_e$ and $T_{e,x}V_e$ are continuous in
$\mathbb{R}^3$ but $V_e$ and $T_{e,x}S_e$ have jumps on the boundary $\partial D$(details can be found in \cite{Kupradze}).
\begin{align*}
(S_e\phi)(x)& = \int_{\partial D}\Gamma_e(x,y)\varphi(y){\rm d}s(y) , &&  x\in \partial D,\\
T_{e,x}(S_e\phi)(x) &=\int_{\partial D}T_{e,x}[\Gamma_e(x,y)]^{\mathsf T}\varphi(y){\rm d}s(y))-\frac{1}{2}\varphi(x) , &&  x\in \partial D,\\
(V_e\phi)(x)& = \int_{\partial D}T_{e,y}[\Gamma_e(x,y)]^{\mathsf T}\varphi(y){\rm d}s(y)+\frac{1}{2}\varphi(x) , && x\in\partial D,\\
T_{e,x}(V_e\phi)(x) & =T_{e,x} \int_{\partial D}T_{e,y}[\Gamma_e(x,y)]^{\mathsf T}\varphi(y){\rm d}s(y), && x\in\partial D.
\end{align*}
Then, we introduce the integral operators
\begin{align*}
  (S_{ee}\phi)(x) & := \int_{\partial D}\Gamma_e(x,y)\varphi(y){\rm d}s(y) , &&  x\in \partial D,\\
    (K_{ee}\phi)(x) & := \int_{\partial D}T_{e,y}[\Gamma_e(x,y)]^{\mathsf T}\varphi(y){\rm d}s(y) , && x\in \partial D,\\
  (K_{ee}'\phi)(x) & := \int_{\partial D}T_{e,x}[\Gamma_e(x,y)]^{\mathsf T}\varphi(y){\rm d}s(y)) , && x\in \partial D,\\
   (N_{ee}\phi)(x) & :=T_{e,x} \int_{\partial D}T_{e,y}[\Gamma_e(x,y)]^{\mathsf T}\varphi(y){\rm d}s(y) , && x\in \partial D.
\end{align*}
All the above integrals are well defined in the sense of the principal value. And in particular the operator $S_{ee}$ for $x\in \partial D$ has a singularity of form $|x-y|^{-1}$, the operators ${K_{ee}}$, ${K_{ee}'}$ have singularity of form $|x-y|^{-2}$ and $N_{ee}$ admits a hyper singular kernel of form $|x-y|^{-3}$. Moreover, we introduce an antisymmetric matrix operator 
\begin{align*}
 (W^{\tau}_{ee}\phi)(x) &:= \int_{\partial D}T^{\tau}_{e,y}[\Gamma_e(x,y)]^{\mathsf T}\varphi(y){\rm d}s(y),
   && x\in \partial D,  \\
 (W^{'\tau}_{ee}\phi)(x) &:= T^{\tau}_{e,x}\int_{\partial D}[\Gamma_e(x,y)]^{\mathsf T}\varphi(y){\rm d}s(y),
   && x\in \partial D,  \\
 (V^{\gamma,\tau}_{ee}\phi)(x) &:= T^{\gamma}_{e,x}\int_{\partial D}T^{\tau}_{e,y}[\Gamma_e(x,y)]^{\mathsf T}\varphi(y){\rm d}s(y) , && x\in \partial D
\end{align*}
where $\tau,\gamma$ are arbitrary constants, and $T^{\tau}_e$ is a generalised traction operator\cite[H.\uppercase\expandafter{\romannumeral 1}, 13.5]{Kupradze} defined by
\begin{align*}
T_\alpha^{\tau} & := ||T_{\alpha,lm}^{\tau}||_{3\times3}=(\mu_\alpha+\tau) \textbf{n} \cdot \text{grad}+(\lambda_\alpha+\mu_\alpha-\tau )\textbf{n}\text{div} + \tau \textbf{n} \times \text{curl}, \\
 T_{\alpha,lm}^{\tau} & =\delta_{lm}\mu_\alpha\frac{\partial}{\partial n}+(\lambda_\alpha+\mu_\alpha-\tau)n_l\frac{\partial}{\partial x_m}+\tau n_m \frac{\partial}{\partial x_l},\ \alpha=i,e, \ l,m=1,2,3.
\end{align*}
Obviously, $T_\alpha^{\mu_\alpha}=T_\alpha$. $W^{\tau}_{ee}$ has the same jump and regularity properties 
as double-layer operator ${K_{ee}}$. Similarly, we also introduce the operators $S_{ii},K_{ii},K_{ii}'$ and $N_{ii}, W^{\tau}_{ii}$ defined on $\partial D_b$, it follows from Theorem 3.36 in \cite[H.\uppercase\expandafter{\romannumeral 4}]{Kupradze} that the operators $S_{\alpha\alpha},K_{\alpha\alpha},K'_{\alpha\alpha}$ with $\alpha=i,e$ are both bounded in $L^p(\partial D)(1<p<\infty)$.

Moreover, we define the operators $S_{ei},K_{ei},K_{ei}'$ and $N_{ei}, W^{\tau}_{ei}$ on $\partial D$ in the same way, but with $x\in \partial D_b$. The operators $S_{ie},K_{ie},K_{ie}'$ and $N_{ie}, W^{\tau}_{ie}$ are defined on $\partial D_b$, but $x\in \partial D$. All above operators are well-defined with no singularity.

Then we give the following existence result.  
\begin{theorem}\label{existence}
 For $f,h\in L^p(\partial D)$ with $4/3\leq p\leq 2$, the transmission problem (\ref{eq1})-(\ref{rd}) has a unique solution
   $(v,u)\in L^2(D\setminus \overline{D_b})\times L^2(\mathbb{R}^3 \setminus D)$ satisfying
  \begin{equation}\label{es}
     ||v||_{L^2 (D\setminus \overline{D_b})} +||u||_{L^2 (\mathbb{R}^3 \setminus \overline{D})} \leq C(||f||_{L^p (\partial D) } +||h||_{L^p{(\partial D)}})
  \end{equation}
 \end{theorem}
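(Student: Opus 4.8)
The plan is to recast the transmission problem (\ref{eq1})--(\ref{rd}) as an equivalent coupled system of \emph{volume and surface} integral equations, and then to prove unique solvability of that system by combining an ellipticity (symbol) argument with the uniqueness already established in the preceding theorem. I would represent the two fields by potentials adapted to their governing equations. Since the exterior field $u$ solves the constant-coefficient Lam\'e equation in $D_e$ and satisfies the Kupradze radiation condition, I represent it by a combined elastic single- and double-layer potential $V_e\phi - i\eta S_e\phi$ on $\partial D$ built from the radiating fundamental solution $\Gamma_e$ (the combination excluding interior-resonance densities). The interior field $v$ obeys a \emph{variable}-density equation and must vanish on $\partial D_b$, so I represent $v$ as an elastic volume (Lippmann--Schwinger) potential carrying the perturbation $(\rho(x)-\rho_0)\omega^2 v$, together with layer potentials supported on both $\partial D$ and $\partial D_b$. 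With this ansatz the Navier equations and the radiation condition hold automatically (the interior equation up to the volume identity), so only the interface relations remain.

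Substituting these representations into the two transmission conditions (\ref{bd1}) on $\partial D$ and the Dirichlet condition (\ref{bd3}) on $\partial D_b$, and using the jump relations recorded above for $S_e,V_e,T_{e,x}S_e,T_{e,x}V_e$ together with their interior analogues, I obtain a closed system $\mathcal{A}\Psi=F$ for the density vector $\Psi$ (the layer densities on $\partial D$ and $\partial D_b$ and the bulk unknown $v|_{D_i}$), with $F$ assembled from $(f,h)$. The displacement-matching equation contributes only operators of order $\le 0$, namely $S_{\alpha\alpha},K_{\alpha\alpha},K'_{\alpha\alpha}$ (bounded on $L^p(\partial D)$), and the coupling blocks $S_{ei},K_{ie},\dots$ are smoothing; but the traction-matching equation forces $T_{e,x}$ and $T_{i,x}$ onto the double-layer potential, so the hyper-singular operators $N_{ee},N_{ii}$ of kernel order $|x-y|^{-3}$ enter the principal part of $\mathcal{A}$.

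The hyper-singularity is the main obstacle: $\mathcal{A}$ is not a compact perturbation of a multiple of the identity, so the Riesz--Fredholm argument does not apply directly, and this is where the \emph{symbol} of the system is essential. Replacing the physical traction $T_\alpha=T_\alpha^{\mu_\alpha}$ by the generalised traction $T_\alpha^{\tau}$ with $\tau$ chosen as the pseudostress parameter renders the strongly singular parts of $K^{\tau}$ and $(K^{\tau})'$ weakly singular, hence compact; using the antisymmetric operators $W^{\tau}_{ee},W'^{\tau}_{ee}$ and integration by parts (G\"unter-type tangential derivatives) one then rewrites the block $N_{ee}$, modulo compact operators, as a composition of tangential derivatives with a weakly singular operator whose principal symbol is explicit. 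Computing the principal symbol $\sigma(\mathcal{A})(x,\xi)$ as a matrix-valued function on $T^*\partial D$ and invoking the ellipticity of the Lam\'e operator together with $\mu_\alpha>0$ and $2\mu_\alpha+3\lambda_\alpha>0$, one verifies that $\sigma(\mathcal{A})(x,\xi)$ is invertible for all $\xi\neq0$. Thus $\mathcal{A}$ is an elliptic system of singular integral operators, and therefore Fredholm of index zero on $L^p(\partial D)^3\times L^p(\partial D_b)^3$ for every $1<p<\infty$.

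Finally I would close by the Fredholm alternative. A density in $\ker\mathcal{A}$ generates, through the same potentials, a pair $(v,u)$ solving the \emph{homogeneous} transmission problem; by the uniqueness theorem above this pair vanishes, and tracking the jump relations backwards (using the combined representation to rule out resonance densities) forces $\Psi=0$, so $\mathcal{A}$ is injective. Index zero then upgrades injectivity to bijectivity with bounded inverse, giving $\|\Psi\|_{L^p}\le C(\|f\|_{L^p(\partial D)}+\|h\|_{L^p(\partial D)})$. The field estimate (\ref{es}) follows by transporting this density bound through the mapping properties of the potentials: the layer and volume potentials send the $L^p$ densities to fields whose $L^2$ norm over $D_i$ and over bounded exterior regions (the radiating part being controlled locally) is bounded, and the admissible range $4/3\le p\le 2$ in $\mathbb{R}^3$ is exactly the range for which the relevant Sobolev embeddings, and the self-consistent solvability of the volume Lippmann--Schwinger equation, hold.
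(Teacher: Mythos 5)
Your proposal is sound in outline and shares the paper's central machinery --- the symbol calculus for systems of singular integral equations to get Fredholmness despite the hyper-singular kernels, followed by the Fredholm alternative closed off by the uniqueness theorem --- but it routes the variable density differently. The paper never couples a volume unknown into the boundary system: in Step 1 it freezes $\rho(x)\omega^2\equiv\omega_1^2$ and solves the resulting constant-coefficient transmission problem by a pure layer ansatz (\ref{eq2.1}) built from generalized tractions $T^{\kappa_r}$, with the Kupradze constants $\alpha_r,\beta_r,\kappa_r,\gamma_r$ in (\ref{ab}) chosen precisely so that the hyper-singular parts cancel in the differences $\alpha_e W_{ee}'^{\gamma_e}-\alpha_i W_{ee}'^{\gamma_i,i}$ and $\alpha_i V^{\gamma_i,\kappa_i,i}_{ee}-\alpha_e V^{\gamma_e,\kappa_e}_{ee}$ and the symbolic determinant is explicitly computable and nonzero (Appendix B); in Step 2 the perturbation $(\omega_1^2-\rho(x)\omega^2)\widetilde v$ is absorbed into an inhomogeneous transmission problem with homogeneous interface data, solved by a separate $H^1$ variational argument, and the solution is assembled as $v=V+\widetilde v$. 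Your Lippmann--Schwinger coupling of $v|_{D_i}$ into the system is a legitimate alternative (the volume operator is compact on $L^2$, so it does not disturb the Fredholm property), and your pseudostress/G\"unter-derivative taming of $N_{ee}$ plays the same role as the paper's generalized-traction reformulation (\ref{bd2.1})--(\ref{bd2.3}); what the paper's splitting buys is that the symbol computation is done once for an explicit constant-coefficient system, while your route keeps everything in one operator at the price of a more delicate symbol analysis. Two points where you are glib: (i) ellipticity of the symbol alone does not give index zero for a system on the two-dimensional surface $\partial D$ --- the paper needs the homotopy argument of Theorem \ref{th5.2} (deforming $\lambda$ to $0$ through normal-type operators) to conclude $\mathrm{ind}=0$, and you would need the analogous step; (ii) with your indirect combined-field representation, injectivity of $\mathcal{A}$ does not follow from uniqueness of the transmission problem alone but requires the extra argument that vanishing fields force vanishing densities, which you mention but which is where such proofs usually require the most care. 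Neither point is fatal, but both need to be supplied.
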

 \begin{proof}
Step 1. Assume that $\rho(x)\omega^2\equiv\omega_1^2 $ is a constant. We seek a pair of solution for the
problem (\ref{eq1})-(\ref{rd}) in the form
\begin{equation}\label{eq2.1}
\begin{split}
    v(x)=&\int_{\partial D_b}T^{\kappa_i}_{i,y}[\Gamma_i(x,y)]^{\mathsf T}\eta(y){\rm d}s(y)+ \int_{\partial D}\Gamma_i(x,y)\alpha_i\psi(y){\rm d}s(y)\\
      &\ +\int_{\partial D}T^{\kappa_i}_{i,y}[\Gamma_i(x,y)]^{\mathsf T}\beta_i\varphi(y){\rm d}s(y),\qquad x\in  D_i\\
    u(x)=&\int_{\partial D}\Gamma_e(x,y)\alpha_e\psi(y){\rm d}s(y)+\int_{\partial D}T^{\kappa_e}_{e,y}[\Gamma_e(x,y)]^{\mathsf T}\beta_e\varphi(y){\rm d}s(y),\qquad x\in  D_e
\end{split}
\end{equation}
where $\alpha_r,\beta_r$ and $\kappa_r(r=i,e)$ are arbitrary constants to be determined below, $\varphi(y),
\psi(y)$ and $\eta(y)$ are the unknown vectors.

Taking into account the identities (see\cite[H.\uppercase\expandafter{\romannumeral 5}, 1.6]{Kupradze})
\begin{equation}\label{id1}
 \begin{split}
    T^{\gamma_e}_eu(x) &=T_eu(x)+(\gamma_e-\mu_e)\mathcal{M}u(x), \\
    T^{\gamma_i}_iv(x) &=T_iv(x)+(\gamma_i-\mu_i)\mathcal{M}v(x),
 \end{split}
\end{equation}
where $\gamma_e $ and $\gamma_i$ are arbitrary constants, and matrix $\mathcal{M}$ is defined as
\begin{equation}
\nonumber
\begin{split}
\mathcal{M} &:=||\mathcal{M}_{lm}||_{3\times3}, \\
\mathcal{M}_{lm}& =n_m \frac{\partial}{\partial x_l}- n_l\frac{\partial}{\partial x_m},\ l,m=1,2,3.
\end{split}
\end{equation}
then we have
\begin{equation}\label{bd2.1}
T^{\gamma_i}_iv(x)-T^{\gamma_e}_eu(x)=T_iv(x)-T_eu(x)+ \mathcal{M}[(\gamma_i-\mu_i)v(x)-(\gamma_e-\mu_e)u(x)]
\end{equation}
Let
\begin{equation}
\nonumber
\gamma_i-\mu_i=\gamma_e-\mu_e
\end{equation} 
then we get a equivalent form of interfacial conditions (\ref{bd1}) 
\begin{equation}\label{bd2.2}
  \begin{split}
     v(x)-u(x) &=F(x), \\
       T^{\gamma_i}_iv(x)-T^{\gamma_e}_eu(x)&=H(x),
  \end{split}
\end{equation}
where
\begin{equation}\label{bd2.3}
\begin{split}
   F(x)&=f(x), \\
   H(x)&=h(x)+(\gamma_i-\mu_i)\mathcal{M}F(x).
\end{split}
\end{equation}
We choose $\alpha_i, \beta_i, \kappa_i$ and $\gamma_i$ as (see \cite[H. \uppercase\expandafter{\romannumeral12}, \S 2]{Kupradze})
%%Due to the boundary properties of the potentials of (\ref{eq2.1}) and (\ref{eq2.2}) we have
\begin{gather}\label{ab}
   \alpha_r=\frac{\mu_i\mu_e}{\mu_r(\mu_i+\mu_e)} ,\qquad \beta_r=-\alpha_r, \qquad b_r=\frac{\lambda_r+\mu_r}{\lambda_r+2\mu_r}, \qquad r=i,e \\
    \kappa_e =\frac{\mu_e(b_i\mu_i-b_e\mu_e)}{b_i\mu_i+b_e\mu_e},\qquad  \kappa_i =\frac{\mu_i(b_e\mu_e -b_i\mu_i)}{b_i\mu_i+b_e\mu_e} \\
     \gamma_e=\frac{\mu_e(\mu_e-\mu_i)}{\mu_i+\mu_e},\qquad \gamma_i=\frac{\mu_i(\mu_i-\mu_e)}{\mu_i+\mu_e}
\end{gather}
Then, by the jump relations of the single- and double-layer potentials (see \cite{Kupradze}) and above
consequences, the problem (\ref{eq1})-(\ref{rd}) can be reduced to the system of integral equations
\begin{equation}\label{ind}
  \begin{pmatrix}
  \eta\\
  \varphi\\
  \psi
  \end{pmatrix}+ L\begin{pmatrix}
  \eta\\
  \varphi\\
  \psi
  \end{pmatrix}=\begin{pmatrix}
  0\\
  -2F\\
  -2H
  \end{pmatrix}\quad \mathrm{in}\ L^p(\partial D_b)\times L^p(\partial D)\times L^p(\partial D),
\end{equation}
where the operator $L$ is defined as 
\begin{equation}\label{ind}
 L=\begin{pmatrix}
 -2W_{ii}^{\kappa_i}&2\alpha_iW_{ei}^{\kappa_i,i}&\-2 \alpha_iS^i_{ei}\\
 -2W_{ie}^{\kappa_i}&2(\alpha_iW_{ee}^{\kappa_i,i}-\alpha_eW_{ee}^{\kappa_e})&2(\alpha_eS_{ee}-\alpha_iS^i_{ee})\\
 -2V^{\gamma_i,\kappa_i,i}_{ie}&2(\alpha_iV^{\gamma_i,\kappa_i,i}_{ee}-\alpha_eV^{\gamma_e,\kappa_e}_{ee})&2(\alpha_eW_{ee}^{'\gamma_e}-\alpha_iW_{ee}^{'\gamma_i,i})
 \end{pmatrix}.
\end{equation}

Here, the operator ${W}_{th}^{\kappa_i,i}, S^i_{th}, W'^{,i}_{th},$ and $ V^{\gamma_i,\kappa_i,i}_{ee}$ with $t,h=i,e$ are defined similarly as $W_{th}^{k_i}, S_{th}, W'_{th},$ and $ V^{\gamma_i,\kappa_i}_{ee} $ with the kernel $\Gamma_e(x,y)$ replaced by $\Gamma_i(x,y)$. Using the same procedure as in \cite{Kupradze}, we may calculate the symbolic determinant of (\ref{ind})(see details in Appendix B); we know that the system (\ref{ind}) is normally solvable and Fredholm's theorems hold for it. This together with the uniqueness of the scattering problem (\ref{eq1})-(\ref{rd}) implies that (\ref{ind}) has a unique solution $(\eta,\psi,\phi)\in L^p(\partial D_b)\times L^p(\partial D)\times L^p(\partial D)$ satisfying the estimate
\begin{equation}\label{ind.1}
  ||\eta||_{L^p{(\partial D_b)}}+||\varphi||_{L^p{(\partial D)}}+||\psi||_{L^p{(\partial D)}}\leq C( ||F||_{L^p{(\partial D)}} +||H||_{L^p{(\partial D)}})
\end{equation}
Then, by the fact that $\Gamma_i(x,y)$ has a singularity of form $|x-y|^{-1}$ and volume potential operator is bounded from $L^2(D)$ into $W^{2,2}(D)$ (see\cite[H.\uppercase\expandafter{\romannumeral 4},Theorem 2.3]{Kupradze}, \cite[Theorem 5.6]{peter1998On}), we have
\begin{equation}\label{Les1}
\begin{split}
&\left|\left|\int_{\partial D}\Gamma_i(x,y)\psi(y){\rm d}s(y)\right|\right|_{L^2(D)}\\
=& \mathop{\mathrm{sup}} \limits_{g\in L^2(D),||g||_{L^2(D)}=1}\left|\int_{D}\left\lbrace\int_{\partial D}\Gamma_i(x,y)\psi(y){\rm d}s(y)\right\rbrace g(x){\rm d}x\right|\\
=&\mathop{\mathrm{sup}} \limits_{g\in L^2(D),||g||_{L^2(D)}=1}\left|\int_{\partial D}\left\lbrace\int_{ D}\Gamma_i(x,y)g(x){\rm d}x\right\rbrace \psi(y){\rm d}s(y)\right|\\
\leq & C \mathop{\mathrm{sup}} \limits_{y\in \partial D}||\Gamma_i(x,y)||_{L^2(D)}||\psi||_{L^p(\partial D)}\\
\leq & C ||\psi||_{L^p(\partial D)}
\end{split}
\end{equation}
where $C$ is a constant depending on $D$. The same procedure, with the fact that the boundary trace operator is bounded from $W^{1,2}(D)$ into $L^q(D)$ for $2\leq q\leq 4$ (see\cite[Theorem 5.36]{Adams2003Sobolev}), we derive 
\begin{equation}\label{Les2}
\begin{split}
&\left|\left|\int_{\partial D}T^{\kappa_i}_{i,y}[\Gamma_i(x,y)]^{\mathsf T}\varphi(y){\rm d}s(y)\right|\right|_{L^2(D)}\\
=& \mathop{\mathrm{sup}} \limits_{g\in L^2(D),||g||_{L^2(D)}=1}\left|\int_{D}\left\lbrace \int_{\partial D}T^{\kappa_i}_{i,y}[\Gamma_i(x,y)]^{\mathsf T}\varphi(y){\rm d}s(y)\right\rbrace g(x){\rm d}x\right|\\
=&\mathop{\mathrm{sup}} \limits_{g\in L^2(D),||g||_{L^2(D)}=1}\left|\int_{\partial D}\left\lbrace \int_{ D}T^{\kappa_i}_{i,y}[\Gamma_i(x,y)]^{\mathsf T}g(x){\rm d}x\right\rbrace  \varphi(y){\rm d}s(y)\right|\\
\leq &  \mathop{\mathrm{sup}} \limits_{g\in L^2(D),||g||_{L^2(D)}=1}\left|\left|T^{\kappa_i}_{i,y}\int_{\partial D}[\Gamma_i(x,y)]^{\mathsf T}g(x){\rm d}x\right|\right|_{L^q(\partial D)}||\varphi||_{L^p(\partial D)}\\
\leq & C ||\varphi||_{L^p(\partial D)}
\end{split}
\end{equation}
where $C$ is a constant depending on $D.$ Further, we have
\begin{equation}\label{Les3}
\begin{split}
&\left|\left|\int_{\partial D_b}T^{\kappa_i}_{i,y}[\Gamma_i(x,y)]^{\mathsf T}\eta(y){\rm d}s(y)\right|\right|_{L^2(D)}\leq C ||\eta||_{L^p(\partial D_b)}\\
&\left|\left|\int_{\partial D}\Gamma_e(x,y)\psi(y){\rm d}s(y)\right|\right|_{L^2(D)}\leq C ||\psi||_{L^p(\partial D)}\\ 
&\left|\left|\int_{\partial D}T^{\kappa_e}_{e,y}[\Gamma_e(x,y)]^{\mathsf T}\varphi(y){\rm d}s(y)\right|\right|_{L^2(D)}\leq C ||\varphi||_{L^p(\partial D)}
\end{split}
\end{equation}
Then combine (\ref{eq2.1}) and (\ref{ind.1})-(\ref{Les3}), we obtain estimate (\ref{es}) in the case when $\rho(x)\omega^2\equiv\omega_1^2 $.

Step 2. For the general case $\rho(x)\in L^\infty {(D_i)}$, we consider the following problem
\begin{align}
\label{eq11} \mu_i\Delta V(x)+(\lambda_i+\mu_i)\nabla(\nabla\cdot V(x))+ \rho(x)\omega^2V(x)  =&g, \qquad \quad x\in  D_i,\\
 \mu_e\Delta U(x)+(\lambda_e+\mu_e)\nabla(\nabla\cdot U(x))+ \omega^2 U(x)  =&0, \qquad \quad  x\in  D_e,\\
\label{bd11} V(x)-U(x)  = &0, \qquad \quad  x\in \partial D , \\
\label{bd22} T_i V(x)-T_e U(x) =&0,\qquad \quad  x\in \partial D , \\
 \label{bd33} V(x) =  &0,\qquad \quad x\in \partial D_b,\\
\label{rdd} \lim_{r\rightarrow \infty }r(\frac{\partial U_\alpha}{\partial r}-i k_\alpha^e U_\alpha) =&0,\qquad\quad \alpha= p,s,
\end{align}
where $g:= (\omega_1^2-\rho(x)\omega^2) \widetilde{v} \in L^2 {( D_i)} $, and $( \widetilde{v}, \widetilde{u})$
 denotes the solution of the problem (\ref{eq1})-(\ref{rd}) with $\rho(x)\omega^2\equiv\omega_1^2 $. By step 1, we have
\begin{equation}\label{es1}
 ||\widetilde{v}||_{L^2{(D\setminus \overline{D_b})}}+|| \widetilde{u}||_{L^2{(\mathbb{R}^3 \setminus \overline{D})}}\leq C(||f||_{L^p{(\partial D)}} +||h||_{L^p{(\partial D)}})
\end{equation}
By using the variational method  \cite{Charalambopoulos2002On}, it is easy to prove that for every $g\in L^2_{( D_i)}$
the problem (\ref{eq11})-(\ref{rdd}) has a unique solution $(V,U)\in H^1_{(D\setminus \overline{D_b})}\times H^1_{(\mathbb{R}^3 \setminus D)} $
 satisfying the estimate
\begin{equation}\label{es2}
    ||V||_{H^1 {(D\setminus \overline{D_b})}}+||U||_{H^1{(\mathbb{R}^3 \setminus\overline{D})}}\leq C||g||_{L^2{(D\setminus \overline{D_b})}}.
\end{equation}
Define $v:=V+ \widetilde{v}$ and $u:=U+ \widetilde{u}$. Then from (\ref{es1}) and (\ref{es2}), we can easily get
that $(v,u)\in L^2 {(D\setminus \overline{D_b})}\times L^2 {(\mathbb{R}^3 \setminus\overline{D})}$ is the unique solution of the problem
 (\ref{eq1})-(\ref{rd}) satisfying the estimate (\ref{es}). The proof is thus complete.
 \end{proof}

Next, we give estimates of solutions to transmission problem (\ref{eq1})-(\ref{rd}) with  incident point sources.
\begin{corollary}\label{coro}
  For $z^*\in \partial D$ and for a sufficiently small constant $\delta>0$ , we define\\
  $ z_j:=z^*+ \frac{\delta}{j} n(z^*)\in \mathbb{R}^3\setminus\overline{D},\ (j=1,2,...)$. Let $(v_j,u_j)$
   be the solution of the transmission problem (\ref{eq1})-(\ref{rd}) corresponding to the incident point
   source $u_j^{inc}=\frac{\Gamma^p(x,z_j)\cdot q}{||\nabla_x\nabla_x^{\mathsf{T}}\Phi_p(x,z_j)\cdot q||_{L^2(\partial D)}}$
    where $\Gamma^p(x,z_j) :=\nabla_x\nabla_x^{\mathsf{T}}\cdot\Gamma_e(x,z_j) =\frac{(k_e^p)^2}{\omega^2}\nabla_x\nabla_x^{\mathsf{T}}\Phi_p(x,z_j), \ j\in \mathbb{N}$ and $q=n(z^*)$. Then
  \begin{equation}\label{coroes}
    ||v_j||_{L^2(D\setminus \overline{D_b})}\leq C
  \end{equation}
  uniformly for $j\in \mathbb{N}$.
\end{corollary}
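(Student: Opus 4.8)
The plan is to argue by duality and reciprocity rather than by inserting $(f_j,h_j)=(u_j^{inc},T_eu_j^{inc})$ directly into the a priori bound (\ref{es}). The reason is that the $L^2(\partial D)$-normalization in the definition of $u_j^{inc}$ only controls the Dirichlet data: one checks that $\|u_j^{inc}\|_{L^2(\partial D)}$ (and hence $\|u_j^{inc}\|_{L^p(\partial D)}$ for $4/3\le p\le 2$) stays bounded in $j$, whereas the traction $T_eu_j^{inc}$ carries a $|x-z_j|^{-4}$ singularity whose $L^p(\partial D)$ norm \emph{diverges} as $z_j\to z^*$. Thus (\ref{es}) is not directly usable, and I would instead test $v_j$ against an arbitrary $g\in L^2(D_i)$ and exploit the well-posed auxiliary problem (\ref{eq11})--(\ref{rdd}).

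Concretely, write $\|v_j\|_{L^2(D\setminus\overline{D_b})}=\sup_{\|g\|_{L^2(D_i)}=1}\big|\int_{D_i}v_j\cdot g\,dx\big|$ and, for each such $g$, let $(V,U)$ solve (\ref{eq11})--(\ref{rdd}) with right-hand side $g$, so that (\ref{es2}) gives $\|U\|_{H^1(\mathbb{R}^3\setminus\overline D)}\le C\|g\|_{L^2(D_i)}\le C$. Applying Betti's second identity in $D_i$ (using $\Delta_i^*v_j+\rho\omega^2v_j=0$ and $\Delta_i^*V+\rho\omega^2V=g$, together with $v_j=V=0$ on $\partial D_b$), then invoking the transmission conditions and Betti's identity in $D_e$ for the two radiating fields $u_j$ and $U$ (whose pairing over $S_R$ vanishes as $R\to\infty$), the volume term collapses to a single boundary pairing,
\begin{equation}
\nonumber
\int_{D_i}v_j\cdot g\,dx=\int_{\partial D}\big(u_j^{inc}\cdot T_eU-U\cdot T_eu_j^{inc}\big)\,ds.
\end{equation}

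It remains to bound this pairing uniformly. Since $u_j^{inc}$ is, up to the scalar $1/N_j$ with $N_j:=\|\nabla_x\nabla_x^{\mathsf T}\Phi_p(\cdot,z_j)q\|_{L^2(\partial D)}$, the longitudinal (curl-free) part of the point-source field and is generated by a second-order multipole at $z_j$, feeding it into the integral representation formula for $U$ (excising a small sphere about $z_j$ and letting its radius tend to $0$) identifies the pairing as a fixed low-order derivative of $U$ at the source point,
\begin{equation}
\nonumber
\int_{\partial D}\big(u_j^{inc}\cdot T_eU-U\cdot T_eu_j^{inc}\big)\,ds=\frac{c}{N_j}\,q\cdot\nabla\big(\nabla\cdot U\big)(z_j),
\end{equation}
where $\phi_U:=\nabla\cdot U$ solves the scalar Helmholtz equation with wave number $k_e^p$ in $D_e$. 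Two ingredients then finish the argument. First, flattening $\partial D$ near $z^*$ and integrating the $|x-z_j|^{-3}$ kernel gives $N_j\ge c\,d_j^{-2}$, where $d_j:=\mathrm{dist}(z_j,\partial D)=\delta/j$. Second, representing $\phi_U$ through the Green's tensor of (\ref{eq11})--(\ref{rdd}) (whose leading singularity coincides with the free-space tensor) and using that $z_j\in D_e$ stays at distance $d_j$ from $\partial D\supset\partial D_i$, one gets $|q\cdot\nabla\phi_U(z_j)|\le\|\,|z_j-\cdot|^{-3}\|_{L^2(D_i)}\,\|g\|_{L^2(D_i)}\le C\,d_j^{-3/2}$, because $\int_{D_i}|z_j-w|^{-6}\,dw\le C\,d_j^{-3}$. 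Combining, $\big|\int_{D_i}v_j\cdot g\,dx\big|\le C\,d_j^{-3/2}/d_j^{-2}=C\,d_j^{1/2}\le C$, uniformly in $j$ and $g$, which is (\ref{coroes}).

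The step I expect to be the main obstacle is precisely this last derivative estimate: the naive interior elliptic bound $|\nabla\phi_U(z_j)|\lesssim d_j^{-1}\|\phi_U\|_{L^\infty(B_{d_j/2}(z_j))}\lesssim d_j^{-5/2}$ is too lossy (it would only yield the divergent $d_j^{-1/2}$), so one must use the sharp $L^2(D_i)$-kernel representation, which forces control of the transmission Green's tensor near a point approaching the $C^2$ interface; the normalization $N_j\sim d_j^{-2}$ is calibrated exactly to beat the resulting $d_j^{-3/2}$ growth. As a sanity check, in the special case $\mu_i=\mu_e$, $\lambda_i=\lambda_e$ (the medium problem (\ref{m1})--(\ref{mrd})) the whole argument collapses to a one-line subtraction: $w_j:=v_j-u_j^{inc}$ extends $u_j$ across $\partial D$ with no jump in value or traction, solves a medium problem whose volume source is a fixed multiple of $u_j^{inc}$ and hence tends to $0$ in $L^2(D_i)$ (again by $\int_{D_i}|z_j-\cdot|^{-6}\le C d_j^{-3}$ and $N_j\gtrsim d_j^{-2}$), and whose only inhomogeneous boundary datum $-u_j^{inc}$ on $\partial D_b$ stays bounded because $z_j$ remains at a fixed positive distance from the embedded obstacle $D_b$; the variational estimate then bounds $\|w_j\|_{L^2(D_i)}$, so that $\|v_j\|_{L^2(D_i)}\le\|w_j\|_{L^2(D_i)}+\|u_j^{inc}\|_{L^2(D_i)}$ is uniformly bounded.
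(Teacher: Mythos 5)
Your diagnosis of why the a priori bound (\ref{es}) cannot be fed directly with $(f,h)=(u_j^{inc},T_eu_j^{inc})$ is correct, and the duality reduction is sound as far as it goes: Betti's second identity in $D_i$ and in $D_e$ (using the transmission conditions, the vanishing of $v_j$ and $V$ on $\partial D_b$, and reciprocity of the two radiating fields $u_j$ and $U$) does collapse $\int_{D_i}v_j\cdot g\,dx$ to the pairing $\int_{\partial D}(u_j^{inc}\cdot T_eU-U\cdot T_eu_j^{inc})\,ds$, which is a fixed second derivative of $U$ at $z_j$ divided by $N_j$, and $N_j\gtrsim d_j^{-2}$ with $d_j=\delta/j$ is right. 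The genuine gap is the final estimate $|q\cdot\nabla(\nabla\cdot U)(z_j)|\le C\,d_j^{-3/2}$. It rests on the pointwise kernel bound $|\nabla_z^2G(z,w)|\le C|z-w|^{-3}$ for the Green's tensor of the auxiliary transmission problem (\ref{eq11})--(\ref{rdd}), \emph{uniformly} as $z=z_j$ approaches the interface $\partial D$ from outside while $w$ ranges over $D_i$ on the other side. This is not an interior estimate (as you yourself note, interior regularity only yields the divergent $d_j^{-1/2}$); it is a second-order interface regularity estimate for the Lam\'{e} system with discontinuous Lam\'{e} parameters, an $L^\infty$ density in $D_i$, and an embedded Dirichlet obstacle. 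Nothing of the sort is established in the paper, which only proves the $H^1$ bound (\ref{es2}) for that problem and never constructs its Green's tensor; the assertion that its ``leading singularity coincides with the free-space tensor'' up to the boundary is precisely the hard point, so as written the argument does not close.

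The paper avoids this entirely by the method of images: it sets $y_j:=z^*-\frac{\delta}{j}n(z^*)\in D$ and considers $(V_j,U_j):=\bigl(v_j,\;u_j-\nabla_x\nabla_x^{\mathsf T}\Gamma_e(x,y_j)\cdot q/N_j\bigr)$, which solves the transmission problem with data $f_j,h_j$ built from the symmetric combination $\nabla_x\nabla_x^{\mathsf T}[\Phi_p(x,z_j)+\Phi_p(x,y_j)]\cdot q$; with $q=n(z^*)$ the leading singular part of the tractions cancels on $\partial D$, so $f_j,h_j$ remain uniformly bounded in $L^2(\partial D)$ and Theorem \ref{existence} applies verbatim to give $\|v_j\|_{L^2(D\setminus\overline{D_b})}=\|V_j\|_{L^2(D\setminus\overline{D_b})}\le C$. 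That image-source cancellation is exactly the elementary substitute for the interface Green's-function estimate your route requires; to salvage the duality argument you would need either to prove that kernel bound or to import the same mirror-source cancellation into your evaluation of $q\cdot\nabla(\nabla\cdot U)(z_j)$.
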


\begin{proof}
  Let $U_j:=u_j- \frac{\nabla_x\nabla_x^{\mathsf{T}}\Gamma_e(x,y_j)\cdot q}{||\nabla_x\nabla_x^{\mathsf{T}}\Gamma_e(x,z_j)\cdot q||_{L^2(\partial D)}}$ and
  $V_j:=v_j$ with $y_j:=z^*- \frac{\delta}{j} n(z^*)\in D\setminus \overline{D_b}$. Then $(V_j,U_j)$ satisfies
  \begin{align*}
 \Delta_i V_j(x)+ \rho(x)\omega^2V_j(x)=&0,   &x&\in D_i,\\
 \Delta U_j(x)+ \omega^2 U_j(x)=&0,     &x&\in D_e,\\
  V_j(x)-U_j(x)= &f_j(x),  &x&\in \partial D , \\
  T_i V_j(x)-T_e U_j(x))=&h_j(x),  &x&\in \partial D , \\
  V_j(x)= &0,  &x&\in\partial D_b,\\
  \lim_{r\rightarrow \infty }r(\frac{\partial U_{j,\alpha}}{\partial r}-i k_\alpha^e U_{j,\alpha})=&0,  &\alpha &=p,s,
  \end{align*}
with
\begin{align}\label{fh}
  f_j(x) & =\frac{1}{||\nabla_x\nabla_x^{\mathsf{T}}\Phi_p(x,z_j)\cdot q||_{L^2(\partial D)}}\nabla_x\nabla_x^{\mathsf{T}}[ \Phi_p(x,z_j) +\Phi_p(x,y_j)]\cdot q ,\\
  h_j(x) & =\frac{1}{||\nabla_x\nabla_x^{\mathsf{T}}\Phi_p(x,z_j)\cdot q||_{L^2(\partial D)}}T_e\nabla_x\nabla_x^{\mathsf{T}}[ \Phi_p(x,z_j) +\Phi_p(x,y_j)]\cdot q.
\end{align}
To estimate the boundedness of $f, h$, we first calculate two derivatives of $\Phi_p(x,z_j)$ and set $x=(x_1,x_2,x_3)$
\begin{equation}\label{derivative}
\frac{\partial^2\Phi_p(x,y)}{\partial x_l\partial x_m} =\frac{e^{ik_p|x-y|}}{4\pi |x-y|} \left\lbrace \frac{(ik_p-1)\delta_{lm}}{|x-y|^2}+\frac{(x_l-y_l)(x_m-y_m)}{|x-y|^2}[\frac{3(1-ik_p|x-y|)}{|x-y|^2}-k_p^2]\right\rbrace ,\ l,m=1,2,3
\end{equation}
Applying the mean-value theorem, we obtain the estimate
\begin{equation}
\left|(1-ik_p|x-y|)e^{ik_p|x-y|}-1\right|\leq {k_p}^2|x-y|^2
\end{equation}
Then, we can easily get that there exists a constant $C(\lambda_e,\mu_e)$ such that
\begin{equation}\label{singular}
   \left|\frac{\partial^2\Phi_p(x,y)}{\partial x_l\partial x_m}\right|\leq C(\lambda_e,\mu_e)|x-y|^{-1}
\end{equation}
Then it's clear that $f_j\in L^2(\partial D)$ is uniformly bounded for $j\in \mathbb{N}.$ Further, $\Gamma^p$ is part of the fundamental solution, by direct calculation  \cite{Athanasiadis20083D} we have the following estimate
\begin{equation}\label{singular1}
 T_e\Gamma^p(x,y)-ik_p(\lambda+2\mu)\Gamma^p(x,y)= \mathit{O}\left( |x-y|^{-2}\right) 
\end{equation}
Together with (\ref{singular}), we obtain that $h_j$ is uniformly bounded in $L^2(\partial D)$ for all $j\in \mathbb{N}.$
Then we get estimate (\ref{coroes}) from Theorem \ref{existence}.
\end{proof}

For scattering problem (\ref{m1})-(\ref{mrd}), the well-posedness was obtained for a more general case in \cite[Proposition 2.1]{Bai2021Effective}.
\begin{proposition}
There exists a unique solution $u \in H^1(\mathbb{R}^3 \setminus \overline{D})$ to the scattering problem $(\ref{m1})-(\ref{mrd})$. Furthermore, it holds that
\begin{equation}
\nonumber
||v||_{H^1(\mathbb{R}^3 \setminus \overline{D_b})}\leq C (||u^{inc}||_{H^{1/2}(\partial D)}+||T_e u^{inc}||_{H^{-1/2}(\partial D)})
\end{equation}
where $C$ is a positive constant.
\end{proposition}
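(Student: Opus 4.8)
The plan is to follow the variational method of \cite{Charalambopoulos2002On,Bai2021Effective}. Since the Lam\'e parameters coincide inside and outside ($\mu_i=\mu_e=\mu$, $\lambda_i=\lambda_e=\lambda$), the total field $v$ and its traction are continuous across $\partial D$ in the weak sense, the sole inhomogeneity being the density jump between $\rho(x)$ on $D_i$ and $\rho_e=1$ on $D_e$; equivalently, the scattered field $v^{sc}=v-u^{inc}$ satisfies the constant-coefficient Navier equation and the Kupradze condition (\ref{mrd}) in $D_e$, with $f=u^{inc}$ and $h=T_e u^{inc}$ playing the role of transmission data on $\partial D$. I would first truncate the exterior: fix a ball $B_R\supset\overline D$ and introduce the exterior Dirichlet-to-Neumann operator $\Lambda$ on $S_R=\partial B_R$, sending the Dirichlet trace of a radiating solution of $\Delta_e^* w+\omega^2 w=0$ in $\mathbb{R}^3\setminus\overline{B_R}$ to its traction $T_e w$ on $S_R$. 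This encodes (\ref{mrd}) exactly and is bounded $H^{1/2}(S_R)\to H^{-1/2}(S_R)$.

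Integrating $\Delta^* v$ by parts over $\Omega_R:=B_R\setminus\overline{D_b}$ (Betti's first identity) then recasts the problem as: find $v\in H^1(\Omega_R)$ with $v=0$ on $\partial D_b$ such that
\[
a(v,w)-\omega^2\int_{\Omega_R}\rho\,v\cdot\overline w\,{\rm d}x-\langle\Lambda v,w\rangle_{S_R}=\ell(w)
\]
for all $w\in H^1(\Omega_R)$ vanishing on $\partial D_b$, where
\[
a(v,w):=\int_{\Omega_R}\bigl[2\mu\,\varepsilon(v):\overline{\varepsilon(w)}+\lambda\,(\dive v)\,\overline{(\dive w)}\bigr]\,{\rm d}x
\]
is the elastic energy form, $\varepsilon$ the linearized strain, and $\ell$ the antilinear functional collecting the known incident-field contributions (note that $T_e v=\Lambda v+(T_e u^{inc}-\Lambda u^{inc})$ on $S_R$, so applying $\Lambda$ to the total field leaves exactly the known correction $T_e u^{inc}-\Lambda u^{inc}$ for $\ell$).

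The heart of the argument is a G{\aa}rding inequality for the form on the left. By Korn's inequality $a(\cdot,\cdot)$ is coercive on $H^1(\Omega_R)$ modulo $L^2$, the Dirichlet condition on $\partial D_b$ eliminating the rigid-motion kernel; the density term is a compact perturbation because $H^1(\Omega_R)\hookrightarrow L^2(\Omega_R)$ compactly on the bounded domain; and the Calder\'on operator splits as $\Lambda=\Lambda_c+\Lambda_k$, with $-\langle\Lambda_c v,v\rangle\ge0$ sign-definite and $\Lambda_k$ smoothing, so that $\langle\Lambda_k\cdot,\cdot\rangle$ is compact. Thus the sesquilinear form is coercive up to a compact operator, hence Fredholm of index zero, and by the Fredholm alternative existence follows from uniqueness. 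Uniqueness is exactly the Rellich-type argument of the opening theorem of this section: with $u^{inc}=0$, testing $\Delta_e^* v^{sc}$ against $\overline{v^{sc}}$ over $B_R\setminus\overline D$ and invoking the radiation asymptotics (\ref{asym1})--(\ref{asym2}) forces $\int_{S_R}|v^{sc}_p|^2\to0$ and $\int_{S_R}|v^{sc}_s|^2\to0$; Rellich's lemma \cite{Colton2013} then gives $v^{sc}=0$ in $D_e$, and the unique continuation principle \cite{Hormander1985The} propagates $v=0$ through $D_i$. With uniqueness established the Fredholm operator is boundedly invertible, and the stated estimate follows by bounding $\ell$ through the trace theorem and the continuity of $\Lambda$, giving $\|\ell\|\lesssim\|u^{inc}\|_{H^{1/2}(\partial D)}+\|T_e u^{inc}\|_{H^{-1/2}(\partial D)}$.

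I expect the main obstacle to be the treatment of the elastic Calder\'on operator $\Lambda$. Unlike the scalar Helmholtz case, the Navier system carries two wave numbers $k_e^p$ and $k_e^s$, so the explicit diagonalization of $\Lambda$ on $S_R$ requires vector spherical harmonics, and the coercive/compact splitting $\Lambda=\Lambda_c+\Lambda_k$ must be verified mode by mode. Pinning down the sign of $\Lambda_c$, so that $-\langle\Lambda_c v,v\rangle$ reinforces the Korn term rather than destroying coercivity, is the delicate point; once this is in place the remaining Fredholm and trace estimates are routine, and the result coincides with \cite[Proposition 2.1]{Bai2021Effective}.
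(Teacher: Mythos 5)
The paper does not actually prove this proposition: it simply invokes \cite[Proposition 2.1]{Bai2021Effective}, where the well-posedness is established for a more general configuration, so there is no in-paper argument to compare against line by line. Your variational sketch (truncation by a ball $B_R$, the elastic Dirichlet-to-Neumann operator $\Lambda$ on $S_R$, Korn plus a G{\aa}rding inequality, Fredholm alternative, and uniqueness via the Rellich/unique-continuation argument already carried out in the first theorem of Section 3) is the standard route and is essentially what the cited reference does, so as a reconstruction it is sound. Two points deserve care. First, you correctly identify the coercive/compact splitting $\Lambda=\Lambda_c+\Lambda_k$ with the right sign of $\langle\Lambda_c v,v\rangle$ as the delicate step; for the Navier system this must be checked via vector spherical harmonics and is genuinely more involved than in the Helmholtz case, so in a written-out proof this cannot be left as an expectation. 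Second, there is a small mismatch between your formulation and the stated estimate: you pose the variational problem for the total field $v$ with the inhomogeneity entering only through the correction $T_e u^{inc}-\Lambda u^{inc}$ on $S_R$, whereas the bound in the proposition is expressed in the norms $\|u^{inc}\|_{H^{1/2}(\partial D)}$ and $\|T_e u^{inc}\|_{H^{-1/2}(\partial D)}$ on $\partial D$. To obtain exactly that estimate one should instead take the unknown to be $v$ in $D\setminus\overline{D_b}$ and $v^{sc}$ in $D_e$, so that the incident field enters as transmission jump data $f=u^{inc}$, $h=T_e u^{inc}$ across $\partial D$ (as in the transmission problem (\ref{bd1}) treated earlier); for a plane incident wave the two versions are of course equivalent, but the bookkeeping of which norms appear on the right-hand side is cleaner in the transmission form.
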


Next, we will give a $L^2$ estimate for problem (\ref{m1})-(\ref{mrd}).
\begin{theorem}\label{medium}
Let $v$ be the solution of the scattering problem (\ref{m1})-(\ref{mrd}) corresponding the point source $u^{inc}=\nabla\Phi_p(x,y)$, where $z\in \mathbb{R}^3 \setminus \overline{D}$ and $\Phi_p(x,y)=e^{ik_p|x-y|}/(4\pi |x-y|)$. Then $v\in  L^2(D\setminus \overline{D_b})$ and $v-u^{inc}\in H^1(D\setminus \overline{D_b})$ such that
\begin{equation}
||v(x;z)||_{L^2(D\setminus \overline{D_b})}+||v(x;z)-u^{inc}||_{H^1(D\setminus \overline{D_b})}
\leq C(||u^{inc}(x;z)||_{ L^2(D\setminus \overline{D_b})}+||u^{inc}(x;z)||_{L^{\infty}(\partial {D_b})}) 
\end{equation}
where $C$ is a constant.
\end{theorem}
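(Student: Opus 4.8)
The plan is to subtract the incident field and reduce to a radiating problem that is forced only inside $D_i:=D\setminus\overline{D_b}$ and carries Dirichlet data only on $\partial D_b$, so that the singularity of the point source at $z$ (the obstruction to a uniform bound as $z\to\partial D$) enters exclusively through a square-integrable volume term. First I would record two facts about $u^{inc}=\nabla\Phi_p(\cdot,z)$. Since $\Phi_p$ is the outgoing fundamental solution of $\Delta+(k_e^p)^2$, a direct computation gives, for $x\neq z$,
\[
\Delta^* u^{inc}+\omega^2 u^{inc}=(\lambda+2\mu)\nabla\big(\Delta\Phi_p+(k_e^p)^2\Phi_p\big)=0,
\]
using $(\lambda+2\mu)(k_e^p)^2=\omega^2$; and because $z\in\mathbb{R}^3\setminus\overline{D}$, the field $u^{inc}$ is real-analytic on a fixed neighborhood of $\overline{D_i}$, in particular across $\partial D_b$.

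Next I set $w:=v-u^{inc}$. Combining $\Delta^* v+\rho\omega^2 v=0$ in $\mathbb{R}^3\setminus\overline{D_b}$ with $\Delta^* u^{inc}=-\omega^2 u^{inc}$ yields
\[
\Delta^* w+\rho\omega^2 w=(1-\rho)\omega^2 u^{inc}=:g \quad\text{in }\mathbb{R}^3\setminus\overline{D_b},
\]
where $g$ is supported in $D_i$ since $\rho\equiv 1$ in $D_e$; moreover $w=-u^{inc}$ on $\partial D_b$ (from $v=0$ there), while $w=v^{sc}$ satisfies the Kupradze radiation condition (the singularity of $u^{inc}$ at $z$ is cancelled, so $w$ is regular there). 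This is again a problem of the type $(\ref{m1})$–$(\ref{mrd})$, now with an $L^2$ volume source and inhomogeneous Dirichlet data on $\partial D_b$; its $H^1$ well-posedness follows from the variational framework behind the Proposition \cite{Bai2021Effective} (uniqueness as in the uniqueness theorem at the start of this section), giving the energy estimate
\[
\|w\|_{H^1(D_i)}\le C\big(\|g\|_{L^2(D_i)}+\|u^{inc}\|_{H^{1/2}(\partial D_b)}\big).
\]

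It then remains to convert the right-hand side into the claimed norms. The volume term is immediate: $\|g\|_{L^2(D_i)}\le C\|u^{inc}\|_{L^2(D_i)}$ because $\rho\in L^\infty(D_i)$, and this term absorbs the growth of the source near $\partial D$ as $z\to\partial D$. For the boundary term I would lift the datum by $\phi:=-\chi u^{inc}$, with $\chi\equiv 1$ near $\partial D_b$ and $\operatorname{supp}\chi\subset\subset D_i$ at fixed positive distance from both $\partial D$ and $z$; then $\tilde w:=w-\phi$ has homogeneous data on $\partial D_b$ and solves $\Delta^*\tilde w+\rho\omega^2\tilde w=g-(\Delta^*+\rho\omega^2)\phi$, whose extra forcing is supported on $\operatorname{supp}\nabla\chi$ and is a combination of $u^{inc}$ and $\nabla u^{inc}$ there. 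Since $u^{inc}$ solves the homogeneous Lamé system on a fixed neighborhood of $\operatorname{supp}\chi$ lying away from its singularity, interior elliptic regularity bounds these terms, hence $\|u^{inc}\|_{H^{1/2}(\partial D_b)}$ and $\|\phi\|_{H^1(D_i)}$, by $\|u^{inc}\|_{L^\infty}$ on a fixed compact neighborhood of $\partial D_b$; as this neighborhood and its distance to $z$ are independent of $z$, it is controlled by $\|u^{inc}\|_{L^\infty(\partial D_b)}$ up to a geometric constant. Putting the pieces together gives $\|v-u^{inc}\|_{H^1(D_i)}=\|w\|_{H^1(D_i)}\le C\big(\|u^{inc}\|_{L^2(D_i)}+\|u^{inc}\|_{L^\infty(\partial D_b)}\big)$, and finally $\|v\|_{L^2(D_i)}\le\|w\|_{L^2(D_i)}+\|u^{inc}\|_{L^2(D_i)}$ completes the estimate.

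I expect the main obstacle to be precisely this last norm-matching step: turning the natural trace norm $\|u^{inc}\|_{H^{1/2}(\partial D_b)}$ produced by the energy estimate into the crude $\|u^{inc}\|_{L^\infty(\partial D_b)}$ of the statement. Arbitrary $L^\infty$ boundary data cannot be lifted into $H^1$, so this step genuinely relies on the datum being the restriction of a solution of the homogeneous elastic equation that is analytic and at fixed distance from its singularity near $\partial D_b$; the delicate point is to make the interior-regularity constant uniform in $z$, which is exactly what renders the bound useful for the point-source family $z_j\to z^*\in\partial D$ in the uniqueness argument.
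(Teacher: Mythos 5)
Your argument is correct in substance but follows a genuinely different route from the paper's. The paper does not subtract the incident field and run an energy estimate; instead it introduces the Green function $G(x,y)$ of the auxiliary exterior Dirichlet problem (\ref{bdeq}) for $D_b$, uses the representation formula to recast the scattering problem as the Lippmann--Schwinger equation (\ref{Lip}), i.e.\ $(I+T)v=V^{inc}$ with $V^{inc}=\nabla\Phi_p(\cdot,z)+V(\cdot;z)$ the point source plus its reflection off $D_b$, and then invokes compactness of the volume potential $T$ on $L^2(D\setminus\overline{D_b})$ together with uniqueness (Riesz--Fredholm) to get $\|v\|_{L^2}\le C\|V^{inc}\|_{L^2}$; the $L^\infty(\partial D_b)$ term enters there naturally as the norm controlling the reflected field $V$ via the well-posedness of (\ref{bdeq}), and the $H^1$ bound on $v-u^{inc}$ comes from the smoothing property $T:L^2\to W^{2,2}$. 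Your decomposition $w=v-u^{inc}$, with volume source $(1-\rho)\omega^2u^{inc}$ and Dirichlet data $-u^{inc}$ on $\partial D_b$, buys a self-contained variational proof that avoids constructing $G$ and the representation formula and yields the $H^1$ estimate directly; what it costs is the final norm conversion, and there your justification is not quite adequate as stated. Interior elliptic regularity does control $\|u^{inc}\|_{C^1}$ on a compact neighbourhood of $\partial D_b$ by $\|u^{inc}\|_{L^\infty(N)}$ for a slightly larger open set $N$, but no general elliptic argument bounds $\|u^{inc}\|_{L^\infty(N)}$ by the supremum over the two-dimensional surface $\partial D_b$ alone (the operator $\Delta^*+\omega^2$ has no maximum principle). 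For the specific datum $u^{inc}=\nabla\Phi_p(\cdot,z)$ the step is nevertheless true, because $|\nabla\Phi_p(x,z)|$ is an explicit decreasing function of $|x-z|$ and $\mathrm{dist}(z,\overline{D_b})\ge\mathrm{dist}(\partial D,\overline{D_b})>0$ uniformly in $z\in\mathbb{R}^3\setminus\overline{D}$, so the two suprema are comparable with a purely geometric constant; you should argue this from the explicit kernel rather than from elliptic regularity. With that repair (and noting that the $H^1$ well-posedness you invoke concerns a problem with a volume source and Dirichlet data on $\partial D_b$, a routine but not literal application of the quoted Proposition), your proof establishes the same estimate as Theorem \ref{medium}.
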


To prove the estimate, we reformulate the scattering problem (\ref{m1})-(\ref{mrd}) as an equivalent Lippmann-Schwinger type equation. In the case $\rho(x)=1$, problem (\ref{m1})-(\ref{mrd}) changes into
\begin{equation}\label{bdeq}
\begin{cases}
\Delta^* w(x)+ \omega^2 w(x)=0,\  &x \in \mathbb{R}^3 \setminus \overline{D_b},\\
 w(x)= f, \ &x \in\partial D_b,\\
 \lim_{r=|x|\rightarrow \infty }r[\frac{\partial w_\alpha(x)}{\partial r}-i k_e^\alpha w_\alpha(x)]=0,\ &\alpha =p,s.
\end{cases}
\end{equation}
where $w(x)=w_p(x)+w_s(x)$, $ w_p(x)$ and $w_s(x) $ are longitudinal and transversal part of $w(x)$ respectively . It was proved in \cite{peter1993uniqueness} that the problem (\ref{bdeq}) is well-posed for every $f$ belonging to a suitable function space.
  
  Suppose $G(x,y)$ is the Green function of problem (\ref{bdeq}) in $\mathbb{R}^3 \setminus \overline{D_b}$ with 
  $f=0$ and $V(x,y)$ is the solution to the problem (\ref{bdeq}) with $f= \nabla\Phi_p(x,y)$ with  $y\in \mathbb{R}^3 \setminus \overline{D_b}$. Define 
 \begin{equation}
 \nonumber
 V^{inc}:=  \nabla \Phi_p(x,y) + V(x;y)\ \ \texttt{in}\ \mathbb{R}^3 \setminus \overline{D_b}.
 \end{equation}
Then applying representation formula \cite[Theorem 5.3]{peter1998On}, we have
\begin{equation} 
\begin{split}
v(x;z)=&\left( \int_{\partial D}-\int_{\partial D_b}\right) \left\lbrace  G(x,y)T_e v(y;z)-v(y;z)T_{e,y} G(x,y)\right\rbrace  \mathrm{d} s(y)\\
     & -\int_{D\setminus \overline{D_b}} \omega^2 \left[ 1-\rho(y)\right]  G(x,y)v(y;z)\mathrm{d} y,\ \ x\in D\setminus \overline{D_b}
\end{split}
\end{equation}
Since $G(x,y)=v(y;z)=0$ for $y \in \partial D_b$ and the fact that $v^{sc}(\cdot;z), G(x,\cdot)$ are solutions satisfying Kuparadze radiation condition, we have
\begin{equation}
\nonumber
\int_{\partial D}\left\lbrace  G(x,y)T_e v^{sc}(y;z)-v^{sc}(y;z)T_{e,y} G(x,y)\right\rbrace  \mathrm{d} s(y)=0.
\end{equation}
Then combine above equations, the solution $v$ of the scattering problem (\ref{m1})-(\ref{mrd}) with $ u^{inc}= \nabla \Phi_p(x,z) $ satisfies the following integral equation
\begin{equation}\label{Lip}
v(x;z)=V^{inc}(x;z)-\omega^2 \int_{D\setminus \overline{D_b}} \left[ 1-\rho(y)\right] G(x,y)v(y;z)\mathrm{d} y,\ \ x\in \mathbb{R}^3 \setminus \overline{D_b}.
\end{equation}

Conversely, by direct calculation, it is easy to prove that (\ref{Lip}) also satisfies the scattering problem (\ref{m1})-(\ref{mrd}).

\begin{remark}
The equivalence between the problem $(\ref{m1})-(\ref{mrd})$ and Lippmann-Schwinger type equation $(\ref{Lip})$ in 
the case $D_b=\emptyset$ was proved in \cite[Lemma 5.7]{peter1998On}.
\end{remark}

Next, we give the proof of Theorem \ref{medium}.
\begin{proof}
Define the volume operator $T$ in $L^p(D\setminus \overline{D_b})$ by 
\begin{equation}
\nonumber
(T\varphi)(x):=\omega^2 \int_{D\setminus \overline{D_b}} [1-\rho(y)] G(x,y)\varphi(y;z)\mathrm{d} y,\ \ x\in D\setminus \overline{D_b}.
\end{equation}
Then $(\ref{Lip})$ equals to
\begin{equation}\label{Lip1}
(I+T)v(x;z)=V^{inc}(x;z),  \ \ x\in D\setminus \overline{D_b}.
\end{equation}
Since $z\in \mathbb{R}^3 \setminus \overline{D}$ and the problem (\ref{bdeq}) is well-posed, we know that
$V^{inc}(x;z)\in L^2(D\setminus \overline{D_b})$ . Moreover, $\rho(x)\in L^{\infty}( D\setminus \overline{D_b})$ and 
$G(x,y)\in L^2(D\setminus \overline{D_b})$ show that the volume potential operator $T$ is bounded from $L^2( D\setminus \overline{D_b})$ into $W^{2,2}( D\setminus \overline{D_b})$ (see\cite[Ch.\uppercase\expandafter{\romannumeral 4},Theorem 2.3]{Kupradze}, \cite[Theorem 5.6]{peter1998On}) and therefore compact in $L^2( D\setminus \overline{D_b})$. Then by
the uniqueness result for the problem $(\ref{m1})-(\ref{mrd})$, the operator $ I+T $ is of Fredholm type with zero index,
which means the existence of a unique solution $v(x;z)\in L^2(D\setminus \overline{D_b})$ with the estimates
\begin{equation}
||v(x;z)||_{ L^2(D\setminus \overline{D_b})}\leq C||V^{inc}(x;z)||_{ L^2(D\setminus \overline{D_b})}\leq C(||u^{inc}(x;z)||_{ L^2(D\setminus \overline{D_b})}+||u^{inc}(x;z)||_{L^{\infty}(\partial {D_b})}).
\end{equation}
Then, the Sobolev imbedding theorems show that
\begin{equation}
||v(x;z)||_{L^2(D\setminus \overline{D_b})}+||v(x;z)-u^{inc}||_{H^1(D\setminus \overline{D_b})}
\leq C(||u^{inc}(x;z)||_{ L^2(D\setminus \overline{D_b})}+||u^{inc}(x;z)||_{L^{\infty}(\partial {D_b})}) 
\end{equation}
\end{proof}

\section{The inverse scattering problem}\label{sec3}
\setcounter{equation}{0}
Now we can state the Inverse Problem, which reads: Find the shape and the position of the inclusion
$D$ (i.e. reconstruct the boundary) from the knowledge of the far-field patterns $u^\infty(\hat{x})$ (or $ v{{sc},^\infty(\hat{x})}$ for all $\hat{x}\in\mathbb{S}^2$, for incident plane wave $u^{inc}(x)=de^{ik_e^px\cdot d}$ with direction $d \in\mathbb{S}^2$, disregarding its contents. The method is base on constructing a well-posed interior transmission problem in a \emph{small} domain associated with Navier equation or modified Navier equation. The smallness of the domain is key since it ensure that the given frequency $\omega$ is not a transmission eigenvalue of the constructed interior transmission problem.

\subsection{Interior transmission problem}
Assume that $ \Omega \in\mathbb{R}^3 $ is a non-empty, open, connected and bounded set with a $ \mathit{C}^2 $ boundary $ \partial \Omega $. The \emph{ modified interior transmission problem } (MITP) is stated as:
\begin{alignat}{3}
\label{MI1}   \mu_i\Delta U(x)+(\lambda_i+\mu_i)\nabla(\nabla\cdot U(x))-U(x)=&\rho_1, \qquad \qquad &x\in &\Omega,\\
    \mu_e\Delta V(x)+(\lambda_e+\mu_e)\nabla(\nabla\cdot V(x))-V(x)=&\rho_2, \qquad \qquad  &x\in &\Omega,\\
     U(x)-V (x)= &f, \qquad \quad \ &x\in &\partial \Omega, \\
 \label{MI2}    T_e U(x)-T_e V(x)=&h,\qquad \quad \ &x\in &\partial \Omega,  
\end{alignat}
where $\rho_1,\rho_2 \in (L^2(D))^3$, $f\in (H^{1/2}(\partial D))^3$ and $h\in (H^{-1/2}(\partial D))^3$. This problem for the case of an anisotropic nonhomogeneous inclusion $D$ was studied in Charalambopoulos \cite{Charalambopoulos2002On}. Then in isotropic homogeneous case, the following result also holds (see \cite[Theorem 4]{Charalambopoulos2002On}).
\begin{lemma}\label{Mes}
If $(\mu_i- \mu_e)[(3\lambda_i+2\mu_i)-(3\lambda_e+2\mu_e)]>0$, the modified interior transmission problem $(\ref{MI1})-(\ref{MI2})$ has a unique strong solution $ (U,V) $ that satisfies the priori estimate
\begin{equation}
||U||_{H^1(D)}+||V||_{H^1(D)}\leq C(||\rho_1||_{L^2(D)}+||\rho_2||_{L^2(D)}+||f||_{H^{1/2}(\partial D)}+||h||_{H^{-1/2}(\partial D)})
\end{equation}
where the constant $C$ is independent of $\rho_1,\rho_2,f,h$, and depends only on the Lam$\acute{e}$ constants $\lambda_e, \mu_e.$
\end{lemma}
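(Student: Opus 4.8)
The plan is to recast (\ref{MI1})--(\ref{MI2}) as a single variational problem on a product Sobolev space and to read off well-posedness from the sign hypothesis $(\mu_i-\mu_e)[(3\lambda_i+2\mu_i)-(3\lambda_e+2\mu_e)]>0$, which is precisely the condition that renders a difference of elastic energy densities sign-definite. First I would lift the boundary data: pick $U_f\in H^1(\Omega)^3$ with trace $f$ on $\partial\Omega$, replace $U$ by $U-U_f$, and thereby reduce to the homogeneous transmission constraint $U-V=0$ on $\partial\Omega$, pushing $f$ together with $h,\rho_1,\rho_2$ into a bounded functional of the test pair. The problem then lives on the closed subspace $\mathbb{H}_0:=\{(U,V)\in H^1(\Omega)^3\times H^1(\Omega)^3:\ U=V\ \text{on}\ \partial\Omega\}$; testing the two Navier equations against an admissible pair $(\Psi,\Phi)$, integrating by parts, and using that the two traces coincide on $\partial\Omega$ collapses the two boundary traction integrals into a single pairing with the datum $h$ from (\ref{MI2}). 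This yields a bounded sesquilinear form $B((U,V),(\Psi,\Phi))=a_i(U,\Psi)-a_e(V,\Phi)$, where $a_\alpha(U,\Psi)=\int_\Omega\mathcal{E}_\alpha(U,\Psi)\,dx+\int_\Omega U\cdot\overline{\Psi}\,dx$ and $\mathcal{E}_\alpha$ is the elastic energy density attached to $(\lambda_\alpha,\mu_\alpha)$.

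The crux is that $B$ is a difference of two nonnegative forms, hence indefinite, so neither Lax--Milgram nor a naive energy estimate applies. I would resolve this by T-coercivity. Setting $w:=U-V\in H^1_0(\Omega)^3$ and using the isomorphism $T(U,V):=(U,2U-V)$ of $\mathbb{H}_0$ onto itself, a short computation gives $\mathrm{Re}\,B((U,V),T(U,V))=\int_\Omega(\mathcal{E}_i-\mathcal{E}_e)(U,U)\,dx+a_e(w,w)$, the interfering cross terms and the two $\int_\Omega|U|^2$ contributions cancelling. Splitting the strain into spherical and deviatoric parts and recalling $3\lambda_\alpha+2\mu_\alpha=3K_\alpha$ (the bulk modulus), one has $\mathcal{E}_i-\mathcal{E}_e=(K_i-K_e)|\mathrm{tr}\,\epsilon|^2+2(\mu_i-\mu_e)|\mathrm{dev}\,\epsilon|^2$; the product hypothesis forces $K_i-K_e$ and $\mu_i-\mu_e$ to share a sign, so (positively, without loss of generality) this density is definite and controls $\|\epsilon(U)\|_{L^2}^2$. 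The term $a_e(w,w)$ controls $\|w\|_{H^1}^2$ via Korn's first inequality on $H^1_0$ and the zeroth-order contribution, while Korn's second inequality upgrades the strain of $U$ to its full $H^1$ norm up to a $\|U\|_{L^2}^2$ remainder that is compact by Rellich's theorem. Hence $B(\cdot,T\cdot)$ satisfies a Gårding inequality and $B$ is Fredholm of index zero on $\mathbb{H}_0$.

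It remains to establish injectivity, after which existence, uniqueness and the stated a priori estimate follow from the bounded invertibility supplied by the Fredholm alternative. For uniqueness I would take homogeneous data and test against $T(U,V)$: the identity above gives $\int_\Omega(\mathcal{E}_i-\mathcal{E}_e)(U,U)\,dx=0$ and $a_e(w,w)=0$, so $\epsilon(U)\equiv0$ and $w\equiv0$, whence $U$ is a rigid motion and $V=U$. Here the modification is decisive: a rigid motion lies in the kernel of $\Delta_i^*$, so $\mu_i\Delta U+(\lambda_i+\mu_i)\nabla(\nabla\cdot U)-U=0$ reduces to $-U=0$ and forces $U=V=0$. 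The main obstacle is the coercivity step: the difference structure of $B$ is genuinely indefinite, and only the combination of the T-coercivity sign flip, the Lam\'e sign condition read through the bulk/shear split, and Korn's inequalities (with the rigid-motion/$L^2$ remainder absorbed by compactness) delivers the Fredholm property. I should also verify that $\mathcal{E}_\alpha$ and the traction appearing in the boundary pairing are normalized consistently with the operator in (\ref{MI2}), since the cancellations above require the sign of that boundary term to match the energy identity.
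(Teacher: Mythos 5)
The paper does not actually prove this lemma: it is quoted verbatim from Charalambopoulos \cite[Theorem 4]{Charalambopoulos2002On} (stated there for anisotropic media and specialized here to the isotropic homogeneous case), so there is no in-paper argument to match yours against. Your blind proof is, however, essentially correct and self-contained, and it rests on the same mechanism that drives the cited result: the hypothesis $(\mu_i-\mu_e)[(3\lambda_i+2\mu_i)-(3\lambda_e+2\mu_e)]>0$ is exactly the statement that the difference of the two isotropic elasticity tensors is sign-definite, since $\mathcal{E}_i-\mathcal{E}_e=(K_i-K_e)|\mathrm{tr}\,\epsilon|^2+2(\mu_i-\mu_e)|\mathrm{dev}\,\epsilon|^2$ with $3K_\alpha=3\lambda_\alpha+2\mu_\alpha$. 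Charalambopoulos exploits this through a direct variational coercivity argument on the pair $(v,\,u-v)$; your T-coercivity packaging with $T(U,V)=(U,2U-V)$ is a cleaner modern variant of the same idea, and your algebra checks out: the cross terms and the two zeroth-order $\int_\Omega|U|^2$ contributions do cancel to give $\mathrm{Re}\,B((U,V),T(U,V))=\int_\Omega(\mathcal{E}_i-\mathcal{E}_e)(U,U)\,dx+a_e(U-V,U-V)$, Korn's inequalities then yield a G\aa rding estimate, and the $-U$ zeroth-order term of the \emph{modified} problem correctly eliminates rigid motions in the uniqueness step, exactly as you say.

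Two small points should be nailed down. First, your ``without loss of generality'' for the positive sign needs the mirrored test map (e.g.\ $T(U,V)=(2V-U,V)$ applied to $-B$) when $\mu_i<\mu_e$ and $K_i<K_e$; this is routine but should be written out. Second, the normalization issue you flag at the end is real: as printed, (\ref{MI2}) reads $T_eU-T_eV=h$, whereas the integration by parts of the first Navier equation (with Lam\'e constants $\lambda_i,\mu_i$) produces the boundary term $T_iU$, and indeed when the lemma is invoked in the proof of Theorem \ref{main} the traction condition used is $T_iU_j-T_eV_j=h_1$. Your proof goes through with the natural tractions $T_iU-T_eV=h$; with the condition literally as printed, the boundary terms would not collapse into a single pairing with $h$ and an extra term $\int_{\partial\Omega}(T_i-T_e)U\cdot\overline{\Psi}\,ds$ would survive. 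This is a typo in the paper rather than a gap in your argument, but it is the one place where the proof would genuinely fail if taken at face value.
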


In the case $\mu_i=\mu_e=\mu>0$ and $\lambda_i=\lambda_e=\lambda$ satisfies $3\lambda+2\mu>0$, the above result does not hold because the existence of the eigenvalue in $\Omega$. Then we consider the following \emph{ interior transmission problem }(ITP): 
\begin{alignat}{3} 
 \label{ITP} \Delta^* U(x)+\omega^2\rho(x)U(x)=&0, \qquad \qquad &\texttt{in}\ &\Omega,\\
 \label{IT2P}\Delta^* V(x)+\omega^2 V(x)=&0, \qquad \qquad  &\texttt{in}\ &\Omega,\\
 \label{ITP3}  U(x)-V (x)= &f, \qquad \quad \ &\texttt{on}\ &\partial \Omega, \\
  \label{ITP4}   T_e U(x)-T_e V(x)=&h,\qquad \quad \ &\texttt{on}\  &\partial \Omega,  
\end{alignat}
where $f\in (H^{1/2}(\partial\Omega))^3$ and $h\in (H^{-1/2}(\partial \Omega))^3$, $\rho(x)\in L^{\infty}(\Omega)$ is positive real valued. More over, we define constants
\begin{equation}
\nonumber
\rho_*:= \inf_{\Omega}\rho(x),\ \ \rho^*:= \sup_{\Omega}\rho(x).
\end{equation}

In \cite{Cakoni2021the}, they consider the interior transmission eigenvalue problem for the elastic waves propagating through an anisotropic inhomogeneous media of bounded support containing an obstacle and gave some important results about the eigenvalues. Similar to the method for Helmholtz equation in \cite{Cakoni2010The,Cakoni2012Transmission}, we extend the results in \cite{Cakoni2021the} to the problem (\ref{ITP})-(\ref{ITP4}). Define the Hilbert space 
\begin{equation}\label{space}
\mathcal{H}(\Omega)=\lbrace u\in (H^1(\Omega))^3 \mid \Delta^* u\in (L^2(\Omega))^3\rbrace
\end{equation}
with norm $||u||^2_{\mathcal{H}(\Omega)}=||u||^2_{H^1(\Omega)}+||\Delta^* u||^2_{L^2(\Omega)}$. For $u\in \mathcal{H}(\Omega)$, it's easy to prove that $\gamma_0u=u|_{\partial \Omega}\in (H^{1/2}(\partial\Omega))^3$ and  $\gamma_1u=T_eu|_{\partial \Omega}\in (H^{-1/2}(\partial\Omega))^3$. In particular, if $\gamma_0u=\gamma_1u=0$ for all $u\in \mathcal{H}(\Omega)$, then $\mathcal{H}(\Omega)=(H^2_0(\Omega))^3$.

\begin{lemma}\label{ITV}
Assume that $0\neq \omega \in \mathbb{R}$ and either $\rho_*>1 $ of $\rho^*<1 $. Suppose that $(U,V)\in (H^1(\Omega))^3\times (H^1(\Omega))^3$ is a solution to problem $(\ref{ITP})-(\ref{ITP4})$. Define $u:= U-V$, then
$u\in \mathcal{H}(\Omega) $ and satisfies
\begin{equation}\label{var}
a(u,h)=0 \ \ \texttt{for all}\ \ h\in (H^2_0(\Omega))^3,
\end{equation}
where
\begin{equation}
\nonumber
\begin{split}
a(u,h)=&\int_{\Omega}(1-\rho(x))^{-1}(\Delta^*+\omega^2)u\cdot(\Delta^* +\omega^2)\overline{h}\   \mathrm{d}x,\\
&+\omega^2\int_{\Omega}\nabla u:\mathbb{C}:\nabla \overline{h}\ \mathrm{d}x 
 -\omega^4\int_{\Omega}u\cdot \overline{h}\ \mathrm{d}x.
\end{split}
\end{equation}
Here $\mathbb{C}=(C_{jkmn})_{3\times 3}$ is elastic moduli, and $C_{jkmn}=\lambda \delta_{ij}\delta_{kl}+ \mu(\delta_{ik}\delta_{jl}+\delta_{il}\delta_{jl})$.
\end{lemma}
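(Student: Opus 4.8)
The plan is to use the two Navier equations to replace the second-order expression $(\Delta^*+\omega^2)u$ by a zeroth-order quantity, and then to collapse $a(u,h)$ to zero by two integrations by parts (Betti's identities), whose boundary terms vanish precisely because $h\in(H_0^2(\Omega))^3$.

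First I would record the pointwise consequences of (\ref{ITP})--(\ref{IT2P}). From $\Delta^* V=-\omega^2 V$ we get $(\Delta^*+\omega^2)V=0$, while $\Delta^* U=-\omega^2\rho U$ gives $(\Delta^*+\omega^2)U=\omega^2(1-\rho)U$. Subtracting and writing $u=U-V$ yields
\[
(\Delta^*+\omega^2)u=\omega^2(1-\rho)U\quad\text{in }\Omega .
\]
The hypothesis $\rho_*>1$ or $\rho^*<1$ forces $1-\rho$ to keep a fixed sign and to stay bounded away from $0$, so $(1-\rho)^{-1}\in L^\infty(\Omega)$ and the relation inverts to $(1-\rho)^{-1}(\Delta^*+\omega^2)u=\omega^2 U\in(L^2(\Omega))^3$. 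This simultaneously shows $u\in\mathcal{H}(\Omega)$: indeed $u\in(H^1(\Omega))^3$ is clear, and $\Delta^* u=\Delta^* U-\Delta^* V=\omega^2(V-\rho U)\in(L^2(\Omega))^3$.

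Next I would insert these identities into the three terms of $a(u,h)$. In the first term I replace $(1-\rho)^{-1}(\Delta^*+\omega^2)u$ by $\omega^2 U$, obtaining $\omega^2\int_\Omega U\cdot(\Delta^*+\omega^2)\overline h\,\mathrm{d}x$; applying the second Betti (Green) identity and using that $\gamma_0 h=0$ and $\gamma_1 h=T_e h=0$ on $\partial\Omega$ (so both surface integrals vanish) gives $\int_\Omega U\cdot\Delta^*\overline h\,\mathrm{d}x=\int_\Omega \Delta^* U\cdot\overline h\,\mathrm{d}x=-\omega^2\int_\Omega\rho\,U\cdot\overline h\,\mathrm{d}x$, so the first term equals $\omega^4\int_\Omega(1-\rho)U\cdot\overline h\,\mathrm{d}x$. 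For the elastic-energy term I use the first Betti identity $\int_\Omega\nabla u:\mathbb{C}:\nabla\overline h\,\mathrm{d}x=-\int_\Omega\Delta^* u\cdot\overline h\,\mathrm{d}x$ (boundary term gone since $\gamma_0 h=0$) together with $\Delta^* u=\omega^2(V-\rho U)$, turning it into $\omega^4\int_\Omega(\rho U-V)\cdot\overline h\,\mathrm{d}x$. The last term is simply $-\omega^4\int_\Omega(U-V)\cdot\overline h\,\mathrm{d}x$.

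Adding the three contributions, the integrand (up to the common factor $\omega^4$) is $(1-\rho)U+(\rho U-V)-(U-V)=0$ pointwise, whence $a(u,h)=0$ for every $h\in(H_0^2(\Omega))^3$, as claimed. The step needing the most care is the justification of the two integrations by parts: since $U$ and $u$ belong only to $\mathcal{H}(\Omega)$ and not to $(H^2(\Omega))^3$, the Betti identities must be read through the generalized traces $\gamma_0,\gamma_1$ introduced after (\ref{space}), and the vanishing of every boundary term must be charged to $\gamma_0 h=\gamma_1 h=0$ for $h\in(H_0^2(\Omega))^3$ rather than to any smoothness of $U$; once this duality pairing is set up, the remaining cancellation is purely algebraic.
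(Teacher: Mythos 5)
Your proof is correct and rests on the same ingredients as the paper's: the pointwise identities $(\Delta^*+\omega^2)u=\omega^2(1-\rho)U$ and $\Delta^*u=\omega^2(V-\rho U)$ (equivalently, the paper's formula $V=\frac{1}{\omega^2}(1-\rho)^{-1}(\Delta^*+\omega^2)u-u$), together with Betti/Green identities whose boundary terms vanish because $\gamma_0h=\gamma_1h=0$ for $h\in (H^2_0(\Omega))^3$. The only difference is organizational — the paper tests the $V$-equation against $h$ and substitutes to arrive at $a(u,h)=0$, whereas you expand $a(u,h)$ and cancel term by term — and your closing remark on reading the integrations by parts through the generalized traces is, if anything, more careful than the paper's treatment.
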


\begin{proof}
Let $u:= U-V$, from the equation (\ref{ITP}), it follows that
\begin{equation}\label{ITP5} 
\begin{split}
V(x)&=\frac{1}{\omega^2}(1-\rho(x))^{-1}(\Delta^*+\omega^2\rho(x))u\\
&=\frac{1}{\omega^2}(1-\rho(x))^{-1}(\Delta^*+\omega^2)u -u
\end{split}
\end{equation}
Testing equation (\ref{IT2P}) by $h\in (H^2_0(\Omega))^3$, we obtain that
\begin{equation}
\nonumber
0=\int_{\Omega}(\Delta^*V+\omega^2 V)\cdot \overline{h}\mathrm{d}x,
\end{equation}
Using Green's second vector theorem and the fact that $h=T_eh=0$ on $\partial \Omega$, we have
\begin{equation}
\nonumber
0=\int_{\Omega}V\cdot(\Delta^*\overline{h} +\omega^2 \overline{h} )\ \mathrm{d}x,
\end{equation}
Combine $(\ref{ITP5})$ and above identity, we obtain
\begin{equation}
\nonumber
\begin{split}
0=&\int_{\Omega}(1-\rho(x))^{-1}(\Delta^*+\omega^2)u\cdot(\Delta^* +\omega^2)\overline{h}\   \mathrm{d}x,\\
&+\omega^2\int_{\Omega}\nabla u:\mathbb{C}:\nabla \overline{h}\ \mathrm{d}x 
 -\omega^4\int_{\Omega}u\cdot \overline{h}\ \mathrm{d}x.
\end{split}
\end{equation}
Then it is not difficult to see that $u\in \mathcal{H}(\Omega) $ and thus the lemma follows.
\end{proof}

The opposite implication is easy to get by direct calculation.
\begin{lemma}\label{ITPin}
Assume that $0\neq \omega \in \mathbb{R}$ and either $\rho_*>1 $ of $\rho^*<1 $. Suppose that $u\in \mathcal{H}(\Omega)$ satisfies $(\ref{var})$. In $\Omega$, define  
\begin{equation}
\nonumber
\begin{split}
V(x)&=\frac{1}{\omega^2}(1-\rho(x))^{-1}(\Delta^*+\omega^2\rho(x))u,\\
U(x)&=V(x)+u(x),
\end{split}
\end{equation}
then $(U,V)\in (H^1(\Omega))^3\times (H^1(\Omega))^3$ solves the problem $(\ref{ITP})-(\ref{ITP4}).$
\end{lemma}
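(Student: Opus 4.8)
The plan is to reverse the computation of Lemma \ref{ITV}. Since either $\rho_*>1$ or $\rho^*<1$, the coefficient $(1-\rho)^{-1}$ lies in $L^\infty(\Omega)$, so from $u\in\mathcal{H}(\Omega)$ (hence $(\Delta^*+\omega^2\rho)u\in (L^2(\Omega))^3$) the function $V$ is a well-defined element of $(L^2(\Omega))^3$ and $U=V+u$. My first observation is purely algebraic: multiplying the definition of $V$ by $\omega^2(1-\rho)$ gives $\omega^2(1-\rho)V=(\Delta^*+\omega^2\rho)u$, and substituting $u=U-V$ one checks that, as distributions, $\Delta^* U+\omega^2\rho U-(\Delta^* V+\omega^2 V)=(\Delta^*+\omega^2\rho)u-\omega^2(1-\rho)V=0$ (the $\Delta^* V$ terms cancel and the remaining pieces are $L^2$). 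Thus the two field equations (\ref{ITP}) and (\ref{IT2P}) are equivalent, and it suffices to establish $\Delta^* V+\omega^2 V=0$.

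To extract this from (\ref{var}), I would rewrite the first term of $a(u,h)$ using the identity $(1-\rho)^{-1}(\Delta^*+\omega^2)u=\omega^2(V+u)$, which follows from splitting $(\Delta^*+\omega^2)u=(\Delta^*+\omega^2\rho)u+\omega^2(1-\rho)u$. This turns $a(u,h)$ into $\omega^2\int_\Omega V\cdot(\Delta^*\overline h+\omega^2\overline h)\,dx$ plus the $u$-terms $\omega^2\int_\Omega u\cdot\Delta^*\overline h\,dx+\omega^2\int_\Omega \nabla u:\mathbb{C}:\nabla\overline h\,dx$, the two $\omega^4\int_\Omega u\cdot\overline h$ contributions cancelling. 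Betti's first identity, together with $h=T_eh=0$ on $\partial\Omega$ for $h\in (H^2_0(\Omega))^3$, shows the last pair vanishes, leaving $a(u,h)=\omega^2\int_\Omega V\cdot(\Delta^*\overline h+\omega^2\overline h)\,dx$. Hence (\ref{var}) forces $\int_\Omega V\cdot(\Delta^*\overline h+\omega^2\overline h)\,dx=0$ for every admissible $h$, and testing against $h\in (C_0^\infty(\Omega))^3$ together with the formal self-adjointness of $\Delta^*$ yields $(\Delta^*+\omega^2)V=0$ in the distributional sense.

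It then remains to promote $V$ from $(L^2(\Omega))^3$ to $(H^1(\Omega))^3$; interior elliptic regularity for the strongly elliptic Lam\'e system makes $V$ smooth inside $\Omega$, and the regularity up to the $C^2$ boundary $\partial\Omega$ — which is what is genuinely needed for $(U,V)\in (H^1(\Omega))^3\times(H^1(\Omega))^3$ and for the traces in (\ref{ITP3})-(\ref{ITP4}) to be meaningful — follows from the standard regularity theory for $\Delta^*+\omega^2$. Once $V\in (H^1(\Omega))^3$, we get $U=V+u\in (H^1(\Omega))^3$, the distributional identities become genuine equations (\ref{ITP})-(\ref{IT2P}), and the boundary conditions hold by construction with $f=\gamma_0 u\in (H^{1/2}(\partial\Omega))^3$ and $h=\gamma_1 u\in (H^{-1/2}(\partial\Omega))^3$, both well defined because $u\in\mathcal{H}(\Omega)$. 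I expect this final regularity step to be the main obstacle: the pointwise formula only places $V$ in $L^2$, so the upgrade to $H^1$ must be read off from the equation $(\Delta^*+\omega^2)V=0$ rather than from the defining expression, and it is the one point where more than bookkeeping is required.
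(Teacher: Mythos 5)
The paper itself gives no argument here beyond the sentence ``the opposite implication is easy to get by direct calculation,'' and your computation is exactly the right direct calculation: the algebraic identity $\omega^2(1-\rho)V=(\Delta^*+\omega^2\rho)u$ makes the two field equations equivalent, the rewriting $(1-\rho)^{-1}(\Delta^*+\omega^2)u=\omega^2(V+u)$ collapses $a(u,h)$ to $\omega^2\int_\Omega V\cdot(\Delta^*+\omega^2)\overline h\,\mathrm{d}x$ after Betti's formula kills the $\nabla u:\mathbb{C}:\nabla\overline h$ term against $\int_\Omega u\cdot\Delta^*\overline h\,\mathrm{d}x$ (using $h=T_eh=0$ on $\partial\Omega$), and testing with $h\in (C_0^\infty(\Omega))^3$ gives $(\Delta^*+\omega^2)V=0$ distributionally. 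The transmission conditions are then read off from $U-V=u\in\mathcal H(\Omega)$. All of this is correct and is surely what the authors mean.

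The genuine gap is the last step, and you have correctly located it but your proposed fix does not work. From $V\in (L^2(\Omega))^3$ and $(\Delta^*+\omega^2)V=0$ you get interior smoothness, but there is no boundary regularity to invoke: $V$ satisfies no boundary condition of its own (only the difference $U-V=u$ carries trace data), and an $L^2$ solution of a constant-coefficient elliptic system on $\Omega$ need not lie in $H^1(\Omega)$ up to $\partial\Omega$ — the scalar analogue $\Delta V+\omega^2V=0$ already gives counterexamples on a ball. Nor does the defining formula help, since $\nabla V$ would require $\nabla\rho$ and third derivatives of $u$, neither of which is available. The honest conclusion of your argument is $(U,V)\in (L^2(\Omega))^3\times(L^2(\Omega))^3$ solving $(\ref{ITP})$--$(\ref{ITP4})$ in the distributional sense; this is in fact all that the paper uses downstream (Lemma \ref{ITPwell} only asserts an $L^2$ bound), so the $H^1$ claim in the statement of Lemma \ref{ITPin} should be regarded as an overstatement rather than something your proof is obliged to deliver. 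You should either weaken the conclusion to $L^2\times L^2$ or add a hypothesis (e.g.\ $\rho\in C^1$ and extra regularity of $u$) under which $V\in H^1$ actually follows from the pointwise formula.
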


Let us define the following auxiliary bounded sesquilinear forms on $ (H^2_0(\Omega))^3\times (H^2_0(\Omega))^3 $:
\begin{alignat*}{3}
&b(u,v);=\int_{\Omega}(\rho(x)-1)^{-1}(\Delta^*+\omega^2)u\cdot(\Delta^* +\omega^2)\overline{v}\   \mathrm{d}x+\omega^4\int_{\Omega}u\cdot \overline{v}\ \mathrm{d}x,\\
&\tilde{b}(u,v);=\int_{\Omega}\frac{\rho(x)}{1-\rho(x)}(\Delta^*+\omega^2)u\cdot(\Delta^* +\omega^2)\overline{v}\   \mathrm{d}x+\int_{\Omega}\Delta^*u\cdot \Delta^*\overline{v}\ \mathrm{d}x,\\
&c(u,v);=\int_{\Omega}\nabla u:\mathbb{C}:\nabla \overline{v}\ \mathrm{d}x .
\end{alignat*}
In terms of these operators we can rewrite (\ref{var}) as 
\begin{alignat*}{3}
&b(u,h)- \omega^2 c(u,h)=0 , \ \ &\texttt{for} \ & \rho_*>1, \forall h\in (H^2_0(\Omega))^3,\\
\texttt{or}\ &\tilde{b}(u,h)- \omega^2 c(u,h)=0 , \ \ &\texttt{for}\ & \rho^*<1, \forall h\in (H^2_0(\Omega))^3,
\end{alignat*}

In \cite{Bellis2013nature}, the author proved that the set of transmission eigenvalues affiliated with (\ref{ITP})-(\ref{ITP4})(or (\ref{var})) is countable, discrete and with infinity being the only possible accumulation point. Then, we can obtain a result in elastics similar to Lemma 2.4 in \cite{Cakoni2010The} for acoustics.
\begin{lemma}\label{coercive}
If  $\rho_*>1 $ of $\rho^*<1 $, then 
\begin{equation}
\nonumber
a(u,u)\geq C ||u||^2_{H^2_0(\Omega)}, \ \ \forall \ u\in (H^2_0(\Omega))^3 
\end{equation}
for $0<\omega^2<min\lbrace\frac{m \lambda_1(\Omega)}{\rho^*},  m \lambda_1(\Omega)\rbrace$, where $\lambda_1(D)$ is the first Dirichlet eigenvalue of the operator $-\Delta$ in $\Omega$ and $m=min\lbrace\mu,3\lambda+2\mu\rbrace$.
\end{lemma}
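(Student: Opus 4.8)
The plan is to split according to the two sign regimes and, in each, to reduce the quadratic form to its principal part $\|\Delta^* u\|_{L^2(\Omega)}^2$ plus lower-order terms, and then to absorb the lower-order terms using an elastic G\aa{}rding/Korn coercivity estimate together with the Poincar\'e inequality. Throughout I write $c(u,u)=\int_\Omega \nabla u:\mathbb{C}:\nabla\overline{u}\,\mathrm{d}x$ and use the integration-by-parts identity $\int_\Omega \Delta^* u\cdot\overline{u}\,\mathrm{d}x=-c(u,u)$, valid for $u\in(H^2_0(\Omega))^3$ since $u$ and $T_e u$ vanish on $\partial\Omega$. For $\rho^*<1$ one has $(1-\rho(x))^{-1}\ge 1$ pointwise, so dropping it in the first integral of $a(u,u)$ and expanding $|(\Delta^*+\omega^2)u|^2=|\Delta^* u|^2+2\omega^2\Re(\Delta^* u\cdot\overline{u})+\omega^4|u|^2$ gives, after the cancellation with the $\omega^2 c(u,u)-\omega^4\|u\|^2$ terms, the clean bound $a(u,u)\ge \|\Delta^* u\|_{L^2}^2-\omega^2 c(u,u)$. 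For $\rho_*>1$ the coefficient $(1-\rho)^{-1}$ is negative, so I would instead work with the companion positive form $b(u,u)-\omega^2 c(u,u)=-a(u,u)$ introduced just before the lemma, using $(\rho(x)-1)^{-1}\ge(\rho^*-1)^{-1}$.

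Two ingredients do the real work. First, an elastic coercivity estimate: extending $u$ by zero to $\mathbb{R}^3$ and decomposing $\widehat{u}(\xi)$ into its longitudinal and transversal parts (on which $\Delta^*$ acts as multiplication by $-(\lambda+2\mu)|\xi|^2$ and $-\mu|\xi|^2$), one computes $c(u,u)=\int |\xi|^2[(\lambda+2\mu)|\widehat{u}^{\,\parallel}|^2+\mu|\widehat{u}^{\,\perp}|^2]\,\mathrm{d}\xi$ and $\|\Delta^* u\|_{L^2}^2=\int|\xi|^4[(\lambda+2\mu)^2|\widehat{u}^{\,\parallel}|^2+\mu^2|\widehat{u}^{\,\perp}|^2]\,\mathrm{d}\xi$. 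Since $\mu>0$ and $\lambda+2\mu>0$ (both following from $\mu>0$, $3\lambda+2\mu>0$), this yields $c(u,u)\ge\min\{\lambda+2\mu,\mu\}\|\nabla u\|_{L^2}^2\ge m\|\nabla u\|_{L^2}^2$ with $m=\min\{\mu,3\lambda+2\mu\}$, and $\|\Delta^* u\|_{L^2}^2\ge \min\{\mu^2,(\lambda+2\mu)^2\}\|D^2u\|_{L^2}^2$, the latter giving $\|\Delta^* u\|_{L^2}\simeq\|u\|_{H^2_0(\Omega)}$. Second, the Poincar\'e inequality $\|\nabla u\|_{L^2}^2\ge\lambda_1(\Omega)\|u\|_{L^2}^2$ gives $c(u,u)\ge m\lambda_1(\Omega)\|u\|_{L^2}^2$; combined with Cauchy--Schwarz $c(u,u)=\langle-\Delta^* u,u\rangle\le\|\Delta^* u\|\,\|u\|$ this produces the two estimates I will need, namely $\|\Delta^* u\|\ge m\lambda_1\|u\|$ and $c(u,u)\le (m\lambda_1)^{-1}\|\Delta^* u\|^2$.

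To finish the case $\rho^*<1$, substitute $c(u,u)\le(m\lambda_1)^{-1}\|\Delta^* u\|^2$ into $a(u,u)\ge\|\Delta^* u\|^2-\omega^2 c(u,u)$ to get $a(u,u)\ge(1-\frac{\omega^2}{m\lambda_1})\|\Delta^* u\|^2$, which is $\ge C\|u\|_{H^2_0}^2$ precisely when $\omega^2<m\lambda_1$; this is the active branch of the stated minimum when $\rho^*<1$. For $\rho_*>1$, expanding $b(u,u)-\omega^2 c(u,u)$ as above and multiplying by $(\rho^*-1)>0$ reduces matters to positivity of $\|\Delta^* u\|^2-\omega^2(\rho^*+1)c(u,u)+\omega^4\rho^*\|u\|^2$. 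Writing $X=\|\Delta^* u\|$, $Y=\|u\|$ and using $c(u,u)\le XY$, this is bounded below by $X^2-\omega^2(\rho^*+1)XY+\omega^4\rho^* Y^2$; the crucial point is that the spectral constraint $X\ge m\lambda_1 Y$ from the previous step excludes the minimizing direction of this indefinite quadratic, and on the admissible cone $X\ge m\lambda_1 Y$ one checks (its discriminant in $\omega^2$ factoring as $\rho^*(\omega^2-m\lambda_1)(\omega^2-m\lambda_1/\rho^*)$) that the expression stays positive exactly for $\omega^2<m\lambda_1/\rho^*$. The two branches together give coercivity for $0<\omega^2<\min\{m\lambda_1/\rho^*,\,m\lambda_1\}$.

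The main obstacle, and the step I would spend the most care on, is getting the sharp $\rho^*$-dependent threshold in the case $\rho_*>1$: a plain Cauchy--Schwarz bound on $c(u,u)$ only yields the weaker condition $\omega^2<m\lambda_1/(\rho^*+1)$, and recovering the stated $m\lambda_1/\rho^*$ genuinely requires feeding back the lower spectral bound $\|\Delta^* u\|\ge m\lambda_1\|u\|$ to restrict to the cone $X\ge m\lambda_1 Y$ before estimating the indefinite quadratic. A secondary technical point is the elastic coercivity $c(u,u)\ge m\|\nabla u\|^2$ with explicit constant $m=\min\{\mu,3\lambda+2\mu\}$ in the regime where $\lambda$ may be negative (only $3\lambda+2\mu>0$ is assumed), which is why the Fourier/Helmholtz decomposition is the cleanest route; and finally the bookkeeping of signs relating $a$, $b$ and $\tilde{b}$, so that the positive form $b-\omega^2 c=-a$ is the one used in the regime $\rho_*>1$.
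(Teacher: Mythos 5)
Your proof is correct and lands on the same two thresholds as the paper, and its skeleton (expand $\|(\Delta^*+\omega^2)u\|^2$, reduce to a quadratic in $X=\|\Delta^*u\|_{L^2}$ and $Y=\|u\|_{L^2}$, and control $c(u,u)$ by $\frac{1}{m\lambda_1(\Omega)}\|\Delta^*u\|^2_{L^2}$) is the paper's skeleton too; but you execute two sub-steps genuinely differently. First, the paper obtains $c(u,u)\le\frac{1}{m\lambda_1}\|\Delta^*u\|^2$ by writing $c(u,u)\le\frac{1}{m}\|\mathbb{C}:\nabla u\|^2$ and then applying a Poincar\'e-type inequality to $\mathbb{C}:\nabla u$ with $\nabla\cdot(\mathbb{C}:\nabla u)=\Delta^*u$; you instead prove the lower bound $c(u,u)\ge m\|\nabla u\|^2\ge m\lambda_1\|u\|^2$ by Fourier/Helmholtz decomposition and combine it with the Cauchy--Schwarz bound $c(u,u)=-\langle\Delta^*u,u\rangle\le XY$ to get both $X\ge m\lambda_1 Y$ and $c(u,u)\le\frac{1}{m\lambda_1}X^2$. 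Your route is cleaner and avoids the paper's somewhat delicate claim that the $L^2$ norm of a matrix field in $H^1_0$ is controlled by the $L^2$ norm of its \emph{divergence} alone. Second, for $\rho_*>1$ the paper uses the $\epsilon$-perturbed completion of the square $\gamma X^2-2\gamma XY+(\gamma+1)Y^2\ge\gamma(1-\gamma/\epsilon)X^2+(\gamma+1-\epsilon)Y^2$ with $\epsilon\to\gamma+1$ to squeeze out the constant $\frac{\gamma}{\gamma+1}=\frac{1}{\rho^*}$, whereas you observe that $(\rho^*-1)\bigl(b(u,u)-\omega^2c(u,u)\bigr)\ge X^2-(\rho^*+1)\omega^2XY+\rho^*\omega^4Y^2=(X-\omega^2Y)(X-\rho^*\omega^2Y)$ factors exactly, and that both factors are bounded below by positive multiples of $X$ on the cone $X\ge m\lambda_1Y$ precisely when $\omega^2<m\lambda_1/\rho^*$; this is an equivalent but more transparent way to see why the sharp threshold is $m\lambda_1/\rho^*$ rather than $m\lambda_1/(\rho^*+1)$. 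You also make explicit two points the paper leaves implicit: the equivalence $\|\Delta^*u\|_{L^2}\simeq\|u\|_{H^2_0(\Omega)}$ needed to pass from $X^2$ to the stated norm, and the sign bookkeeping whereby the coercive form in the regime $\rho_*>1$ is $b-\omega^2c=-a$ rather than $a$ itself.
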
 
 
\begin{proof}
In the case $\rho_*>1$, set $\gamma=\frac{1}{\rho^*-1}$. Then for $u\in (H^2_0(\Omega))^3$
\begin{equation}
\nonumber
\begin{split}
b(u,u)&=\int_{\Omega}(\rho(x)-1)^{-1}(\Delta^*+\omega^2)u\cdot(\Delta^* +\omega^2)\overline{u}\   \mathrm{d}x+\omega^4\int_{\Omega}u\cdot \overline{u}\ \mathrm{d}x\\
&\geq \gamma ||(\Delta^*+\omega^2)u||^2_{L^2(\Omega)}+\omega^4||u||^2_{L^2(\Omega)}\\
&\geq \gamma ||\Delta^*u||^2_{L^2(\Omega)}- 2\gamma \omega^2 ||\Delta^*u||^2_{L^2(\Omega)}||u||^2_{L^2(\Omega)}+(\gamma +1)\omega^4||u||^2_{L^2(\Omega)}\\
&\geq \gamma (1-\frac{\gamma}{\epsilon})||\Delta^*u||^2_{L^2(\Omega)}+(\gamma +1-\epsilon)\omega^4||u||^2_{L^2(\Omega)}
\end{split}
\end{equation}
Here we use the following inequality(see \cite{Cakoni2012Transmission}), for each $\epsilon>0$
\begin{equation}
\nonumber
\begin{split}
&\gamma X^2- 2\gamma XY+(\gamma+1)Y^2\\
=&\epsilon(Y-\frac{\gamma}{\epsilon}X)^2+\gamma(1-\frac{\gamma}{\epsilon})X^2+(\gamma +1-\epsilon)Y^2\\
\geq &\gamma (1-\frac{\gamma}{\epsilon})X^2+(\gamma +1-\epsilon)Y^2.
\end{split}
\end{equation}
Choosing $\gamma<\epsilon<\gamma+1$ immediately shows that $b$ is coercive. Moreover, since $ (\mathbb{C}:\nabla u)\in (H_0^1(\Omega))^3$, by $Poincar\acute{e}$ inequality, we have
\begin{equation}
\nonumber
\begin{split}
c(u,u)&=\int_{\Omega}\nabla u:\mathbb{C}:\nabla \overline{u}\ \mathrm{d}x \\
&\leq \frac{1}{m} ||\mathbb{C}:\nabla u||^2_{L^2(\Omega)}\\
&\leq \frac{1}{m \lambda_1(\Omega)}||\nabla \cdot(\mathbb{C}:\nabla u)||^2_{L^2(\Omega)}\\
&= \frac{1}{m \lambda_1(\Omega)}||\Delta^*u||^2_{L^2(\Omega)}
\end{split}
\end{equation}
where $m=min\lbrace\mu,3\lambda+2\mu\rbrace, \lambda_1(D)$ is the first Dirichlet eigenvalue of the operator $-\Delta$ in $\Omega$.
Then, combine above inequalities, we have
\begin{equation}
a(u,u)\geq (\gamma -\frac{\gamma^2}{\epsilon}-\frac{\omega^2}{m \lambda_1(\Omega)})||\Delta^*u||^2_{L^2(\Omega)}+(\gamma +1-\epsilon)\omega^4||u||^2_{L^2(\Omega)}.
\end{equation}
Choosing $\epsilon $ arbitrary closed to $\gamma+1$, and $\omega^2<(\gamma -\frac{\gamma^2}{\epsilon})m \lambda_1(\Omega)<\frac{m \lambda_1(\Omega)}{\rho^*}$, then $a(u,u)$ is coercive which means
\begin{equation}
\nonumber
a(u,u)\geq C ||u||^2_{H^2_0(\Omega)}, \ \ \forall \ u\in (H^2_0(\Omega))^3.
\end{equation}
The same for the case $\rho^*<1$, we obtain that $a(u,u)$ is coercive when $\omega^2< m \lambda_1(\Omega)$.
\end{proof}

Next, we give an estimate of problem $(\ref{MI1})-(\ref{MI2})$. We assume that the data  $f\in (H^{1/2}(\partial\Omega))^3$ and $h\in (H^{-1/2}(\partial \Omega))^3$ satisfy the condition $(\mathbf{C})$ with some $u_0 \in \mathcal{H}(\Omega)$ such that  $\gamma_0u_0=f$ and  $\gamma_1u_0=h$. 
\begin{lemma}\label{ITPwell}
Assume that $f,h$ satisfy the condition $(\mathbf{C})$ with $u_0 \in \mathcal{H}(\Omega)$. For any fixed $\omega\neq 0$, if the diameter of $ \Omega $ is small enough such that $\omega^2<min\lbrace\frac{m \lambda_1(\Omega)}{\rho^*},\lambda_1(\Omega)\rbrace$, then the interior transmission problem $(\ref{MI1})-(\ref{MI2})$ has a unique solution $(U,V)\in (L^2(\Omega))^3\times (L^2(\Omega))^3$ with
\begin{equation}
||U||_{L^2(\Omega)}+||V||_{L^2(\Omega)}\leq C||u_0||_{\mathcal{H}(\Omega)}.
\end{equation}
\begin{proof}
For any fixed $\omega\neq 0$, if the diameter of $ \Omega $ is small enough such that $\omega^2<min\lbrace\frac{m \lambda_1(\Omega)}{\rho^*},\lambda_1(\Omega)\rbrace$, then define $u:=U-V \in \mathcal{H}(\Omega) , \tilde{u}=u-u_0 \in (H^2_0(\Omega))^3$.
By Lemma \ref{coercive}, we have
\begin{equation}
\nonumber
a(\tilde{u},\tilde{u})\geq C ||\tilde{u}||^2_{H^2_0(\Omega)},
\end{equation}
Since $u$ satisfied the variational problem (\ref{var}) which means
\begin{equation}
\nonumber
a(u,\tilde{u})=a(\tilde{u},\tilde{u})+a(u_0,\tilde{u})=0.
\end{equation}
Since $a(u,u)$ is bounded and coercive on $ (H^2_0(\Omega))^3\times (H^2_0(\Omega))^3 $, by Lax-Milgram theorem, the variational problem (\ref{var}) has a unique solution $ \tilde{u}\in H^2_0(\Omega) $ satisfying
\begin{equation}
\nonumber
 ||\tilde{u}||^2_{H^2_0(\Omega)}\leq C  ||u_0||^2_{\mathcal{H}(\Omega)}.
\end{equation}
Then by Lemma \ref{ITPin}, define $V(x):=\frac{1}{\omega^2}(1-\rho(x))^{-1}(\Delta*+\omega^2\rho(x))u,
U(x):=V(x)+u(x)$, it's easy to derive
\begin{equation}
\nonumber
||U||_{L^2(\Omega)}+||V||_{L^2(\Omega)}\leq C||u_0||_{\mathcal{H}(\Omega)}
\end{equation}
from the fact that $u=u_0+\tilde{u}.$
\end{proof}
\end{lemma}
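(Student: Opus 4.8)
The plan is to collapse the two-field transmission problem to the single coercive variational identity \eqref{var} for the difference $u=U-V$, remove the boundary data by subtracting the lift $u_0$, and then apply the Lax--Milgram theorem on $(H^2_0(\Omega))^3$. Concretely, by Lemma \ref{ITV} any solution $(U,V)$ of the interior transmission problem $(\ref{ITP})$--$(\ref{ITP4})$ yields $u:=U-V\in\mathcal{H}(\Omega)$ solving $a(u,h)=0$ for all $h\in(H^2_0(\Omega))^3$, with traces $\gamma_0u=f$, $\gamma_1u=h$ read off from $(\ref{ITP3})$--$(\ref{ITP4})$; conversely, Lemma \ref{ITPin} reconstructs $(U,V)$ from such a $u$ through $V=\omega^{-2}(1-\rho)^{-1}(\Delta^*+\omega^2\rho)u$ and $U=V+u$. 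Hence it suffices to produce $u$ in the affine class $\{u\in\mathcal{H}(\Omega):\gamma_0u=f,\ \gamma_1u=h\}$ solving \eqref{var}.

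Next I would set $\tilde u:=u-u_0$, which by condition $(\mathbf{C})$ (giving $\gamma_0u_0=f$, $\gamma_1u_0=h$) has vanishing traces, so $\tilde u\in(H^2_0(\Omega))^3$ since $\mathcal{H}(\Omega)$ with zero traces coincides with $(H^2_0(\Omega))^3$. The variational problem then becomes
\[
a(\tilde u,h)=-a(u_0,h),\qquad \forall\,h\in(H^2_0(\Omega))^3 .
\]
The antilinear functional $h\mapsto -a(u_0,h)$ is bounded: since $\rho_*>1$ or $\rho^*<1$ forces $(1-\rho)^{-1}\in L^\infty(\Omega)$, and $u_0\in\mathcal{H}(\Omega)$ controls $(\Delta^*+\omega^2)u_0\in L^2$ together with $\nabla u_0\in L^2$, one checks $|a(u_0,h)|\le C\|u_0\|_{\mathcal{H}(\Omega)}\|h\|_{H^2_0(\Omega)}$. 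The smallness hypothesis is exactly what activates the engine: as $\mathrm{diam}(\Omega)\to0$ the first Dirichlet eigenvalue $\lambda_1(\Omega)\to\infty$, so the fixed $\omega$ eventually satisfies $\omega^2<\min\{m\lambda_1(\Omega)/\rho^*,\lambda_1(\Omega)\}$, whence Lemma \ref{coercive} gives coercivity of $a$ on $(H^2_0(\Omega))^3$. With $a$ also bounded, Lax--Milgram furnishes a unique $\tilde u$ obeying $\|\tilde u\|_{H^2_0(\Omega)}\le C\|u_0\|_{\mathcal{H}(\Omega)}$.

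Finally I would recover $(U,V)$ via Lemma \ref{ITPin} from $u=u_0+\tilde u$. The triangle inequality together with the embedding $(H^2_0(\Omega))^3\hookrightarrow\mathcal{H}(\Omega)$ gives $\|u\|_{\mathcal{H}(\Omega)}\le C\|u_0\|_{\mathcal{H}(\Omega)}$; substituting into the formula for $V$, whose $L^2$ norm is dominated by $\|(\Delta^*+\omega^2\rho)u\|_{L^2(\Omega)}\le C\|u\|_{\mathcal{H}(\Omega)}$, and then using $U=V+u$, yields $\|U\|_{L^2(\Omega)}+\|V\|_{L^2(\Omega)}\le C\|u_0\|_{\mathcal{H}(\Omega)}$. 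Uniqueness of $(U,V)$ follows from the uniqueness clause of Lax--Milgram, since two solutions produce the same $\tilde u$. I expect the only genuinely delicate point to be the careful bookkeeping of the reduction in Lemmas \ref{ITV}--\ref{ITPin}: one must confirm it is a true equivalence that respects the Dirichlet and traction traces, so that solving the coercive problem for $\tilde u$ reconstructs a bona fide solution pair $(U,V)$ with the prescribed data; once that is pinned down, the coercivity and functional-analytic steps are routine.
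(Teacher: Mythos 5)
Your proposal is correct and follows essentially the same route as the paper: reduce to the variational problem \eqref{var} for $u=U-V$, subtract the lift $u_0$ to get $\tilde u\in (H^2_0(\Omega))^3$, apply Lax--Milgram using the coercivity from Lemma \ref{coercive}, and reconstruct $(U,V)$ via Lemma \ref{ITPin}. Your write-up is in fact slightly more careful than the paper's, since you state the Lax--Milgram problem properly as $a(\tilde u,h)=-a(u_0,h)$ for all test functions $h$ and check boundedness of that right-hand-side functional, whereas the paper only tests against $\tilde u$ itself.
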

\subsection{Uniqueness of the Inverse Problem }
Based on above results, we will prove the uniqueness results in determining the inhomogeneous penetrable obstacle
$D$ disregarding its contents $ \rho $ and $ D_b $. 

First, we give the following uniqueness result in the case that medium in $D_i$ and $D_e$ are different.
\begin{theorem}\label{main}
  Given $\omega>0$, suppose $(\mu_i- \mu_e)[(3\lambda_i+2\mu_i)-(3\lambda_e+2\mu_e)]>0$ and let $u^\infty(\hat{x},d)$ 
  and $ \widetilde{u}^\infty(\hat{x},d)$ be the far-field patterns of the solutions $u(x)$ and $\widetilde{u}(x)$ to the
  transmission problem $(\ref{eq1})-(\ref{rd})$ with respect to the scatterers $(D,\rho,D_b)$ and $( \widetilde{D},  \widetilde{\rho}, \widetilde{D_b})$. Then if $u^\infty(\hat{x},d)= \widetilde{u}^\infty(\hat{x},d)$
   for all $\hat{x},d \in \mathbb{S}^2$, then $D=\widetilde{D}$.
\end{theorem}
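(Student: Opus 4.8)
The plan is to prove uniqueness by contradiction, following the singular-source scheme that the earlier results (Corollary \ref{coro} and Lemma \ref{ITPwell}) were clearly designed to support. Suppose $D\neq\widetilde{D}$. Then the symmetric difference of the two domains is nonempty, so there is a point $z^*$ lying on one boundary, say $z^*\in\partial D$, with a small neighborhood contained in $\widetilde{D_e}\se\overline{D}$ (or the reverse). The idea is to probe this point with the sequence of incident point sources $u_j^{inc}$ concentrated at $z_j=z^*+\tfrac{\delta}{j}n(z^*)$, exactly as in Corollary \ref{coro}, and to exploit the fact that the singularity of the source approaches $z^*$ from the exterior side.

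First I would use Rellich's lemma together with the assumed equality $u^\infty=\widetilde u^\infty$ to conclude that the two scattered fields $u_j$ and $\widetilde u_j$ coincide in the common unbounded exterior component $G:=\mathbb{R}^3\se(\overline{D}\cup\overline{\widetilde D})$. Next I would transfer this information across $\partial D$ near $z^*$: since the source point $z_j$ lies outside both obstacles and the scattered fields are real-analytic away from the singularity, the interior field $v_j$ associated with $(D,\rho,D_b)$ and the interior/exterior field associated with $(\widetilde D,\widetilde\rho,\widetilde{D_b})$ can be compared on a small domain $\Omega$ straddling the interface. The construction should set $\Omega$ to be a small ball (or lens) around $z^*$ so small that $\omega^2<\min\{m\lambda_1(\Omega)/\rho^*,\lambda_1(\Omega)\}$, which is precisely the smallness hypothesis guaranteeing, via Lemma \ref{ITPwell} (or Lemma \ref{Mes} in the present $(\mu_i-\mu_e)[\cdots]>0$ regime), that $\omega$ is not a transmission eigenvalue and that the relevant interior transmission problem has a bounded solution operator.

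The contradiction then comes from a blow-up comparison. On one hand, Corollary \ref{coro} gives the uniform bound $\|v_j\|_{L^2(D\se\overline{D_b})}\leq C$ for the fields seen by the obstacle whose boundary passes through $z^*$. On the other hand, the source $u_j^{inc}=\Gamma^p(x,z_j)\cdot q/\|\cdots\|_{L^2(\partial D)}$ has been normalized so that, on the side where $z^*$ lies in the \emph{interior} of the other medium, the corresponding field must reproduce the singular behavior of $u_j^{inc}$ as $j\to\infty$ (its $L^2$ norm on $\Omega$ diverges because $z_j\to z^*$ and $y_j\to z^*$ both approach the interface). Feeding the difference of the two fields into the well-posed interior transmission problem on $\Omega$, the a priori estimate of Lemma \ref{ITPwell} bounds the interior field by $C\|u_0\|_{\mathcal H(\Omega)}$, which stays finite; but the explicit singularity of the point source forces the $L^2(\Omega)$ norm of that same field to blow up. This incompatibility between a uniform bound and a forced divergence is the desired contradiction, so $D=\widetilde D$.

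The hard part will be the matching step: correctly identifying which interior transmission problem (the genuine ITP \eqref{ITP}--\eqref{ITP4} or the modified MITP \eqref{MI1}--\eqref{MI2}) applies on $\Omega$, verifying that the boundary data $f_j,h_j$ restricted to $\partial\Omega$ satisfy the condition $(\mathbf C)$ with a controllable extension $u_0\in\mathcal H(\Omega)$, and ensuring the traction and continuity conditions across the portion of $\partial D$ inside $\Omega$ are genuinely inherited from the coincidence of $u_j,\widetilde u_j$ in $G$. One must also treat the two geometric cases ($z^*\in\partial D$ versus $z^*\in\partial\widetilde D$) symmetrically and handle the embedded objects $D_b$, $\widetilde{D_b}$ by shrinking $\Omega$ to avoid them entirely, so that the comparison is insensitive to the unknown contents. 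Getting the normalization of the point source to interact correctly with the finite a priori bound — so that divergence is genuinely forced on exactly one side — is where the argument must be made airtight.
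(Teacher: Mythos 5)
Your overall scheme --- probing $z^*\in\partial D$ with the normalized point sources $u_j^{inc}$ of Corollary \ref{coro}, transferring the resulting field identity into a small domain $D_0\subset D\setminus\overline{(\widetilde D\cup D_b)}$ near $z^*$, and contradicting a small-domain interior-transmission a priori bound with the blow-up of $\|u_j^{inc}\|_{H^1(D_0)}$ --- is exactly the paper's. But there is a genuine gap at your first step. Rellich's lemma applied to the hypothesis $u^\infty(\hat x,d)=\widetilde u^\infty(\hat x,d)$ only yields $u(\cdot,d)=\widetilde u(\cdot,d)$ in $G$ for \emph{plane-wave} incidence; it does not by itself give $u_j=\widetilde u_j$ in $G$ for the \emph{point-source} incidence you probe with, since far-field patterns of point sources are not among the given data. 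The paper closes this by approximating $\Gamma^p(\cdot,z_j)\cdot q$ on a ball $B\supset\overline{D\cup\widetilde D}$ with $z_j\notin\overline B$ by Herglotz wave functions $u^{inc}_{j,m}$ (superpositions of plane waves), applying the plane-wave identity to each $u^{inc}_{j,m}$, and letting $m\to\infty$ using the continuous dependence of the solution of (\ref{eq1})--(\ref{rd}) on the incident field. Without this denseness argument (or an equivalent mixed-reciprocity relation) the proof does not get off the ground.

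The second issue is that you defer the ``matching step,'' which is where the proof actually lives, and you remain undecided between the ITP and the MITP. In the regime $(\mu_i-\mu_e)[(3\lambda_i+2\mu_i)-(3\lambda_e+2\mu_e)]>0$ of this theorem the correct device is the \emph{modified} problem (\ref{MI1})--(\ref{MI2}) with Lemma \ref{Mes}; Lemma \ref{ITPwell} and its condition $(\mathbf{C})$ belong to the equal-Lam\'{e}-constant case of Theorem \ref{main1} and play no role here. Concretely the paper sets $U_j=v_j$ and $V_j=\widetilde u_j+u_j^{inc}$ on $D_0$, so the sources are $\rho_1=-(\rho\omega^2+1)v_j$ (uniformly bounded in $L^2(D_0)$ precisely by Corollary \ref{coro}) and $\rho_2=-(\omega^2+1)(\widetilde u_j+u_j^{inc})$ (bounded because $z^*$ has positive distance from $\widetilde D$), while the uniform $H^{1/2}\times H^{-1/2}$ bounds on the interface data come from a local elliptic estimate for $W_j=v_j-\widetilde u_j-u_j^{inc}$, which vanishes on $\partial D_0\cap\partial D$ by the equality established in $G$. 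Lemma \ref{Mes} then bounds $\|V_j\|_{H^1(D_0)}$ uniformly, contradicting $\|u_j^{inc}\|_{H^1(D_0)}\to\infty$ while $\|\widetilde u_j\|_{H^1(D_0)}$ stays bounded. These verifications do go through, but your sketch asserts rather than supplies them.
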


\begin{proof}
  Assume that $D\neq \widetilde{D}$. Without loss of generality, choose $z^*\in \partial D\setminus \partial {D_b}$ and define
 \begin{equation}
  \nonumber
 z_j:=z^*+ \frac{\delta}{j} n(z^*), \qquad j=1,2,3,...  
  \end{equation}  
with a small enough $\delta>0$ such that $z_j\in B$, where $B$ denotes a small ball centered at $z^*$
such that $B\cap(\tilde{D}\cup D_b)= \emptyset.$

\begin{figure}[htbp]
\centering
\includegraphics[scale=0.5]{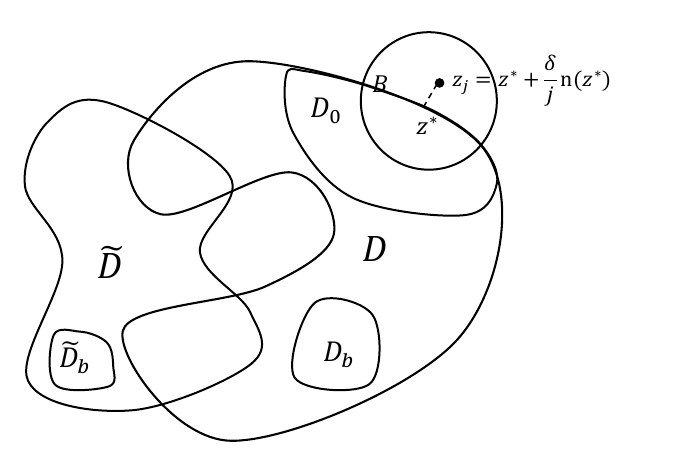}
\caption{Two different scattering obstacles.}\label{f1}
\end{figure}

Consider the transmission problem (\ref{eq1})-(\ref{rd}) with the boundary data $f$ and $h$ induced by
the longitudinal part of the incident point source
\begin{equation}
\nonumber
 u_j^{inc}=\frac{\Gamma^p(x,z_j)\cdot q}{||\nabla_x\nabla_x^{\mathsf{T}}\Phi_p(x,z_j)\cdot q||_{L^2(\partial D)}}=
   \frac{ {k_p}^2}{\omega^2}\frac{\nabla_x\nabla_x^{\mathsf{T}}\Phi_p(x,z_j)\cdot q}{{||\nabla_x\nabla_x^{\mathsf{T}}\Phi_p(x,z_j)\cdot q||_{L^2(\partial D)}}},\qquad j=1,2,3,...
\end{equation}
for $q\in \mathbb{R}^3$, where $\Phi_p(x,z_j)$ is the fundamental solution to the three-dimensional
Helmholtz equation given by
\begin{equation}
\nonumber
\Phi_p(x,z_j)= \frac{1}{4\pi}\frac{e^{ik_e^p|x-z_j|}}{|x-z_j|},\qquad x\neq z_j.
\end{equation} 
As assumption $u^\infty(\hat{x},d)=\tilde{u}^\infty(\hat{x},d)$ for all $\hat{x},d \in \mathbb{S}^2$, we can use
 Rellich's lemma \cite[Theorem 2]{Peter1993A} to get
\begin{equation}\label{in1}
 u(x,d)=\tilde{u}(x,d) \qquad \text{ in}\  \overline{G}
\end{equation}
for all $d \in \mathbb{S}^2$, where $G$ denotes the unbounded component of $\mathbb{R}^3\setminus\overline{ (D\cup\tilde{D})} $.

Then we choose a smooth and bounded domain $B$ such that $\mathbb{R}^3\setminus B$ is connected,
 $\overline{(D\cup\tilde{D})}\subset B$ and $z_j\notin \overline{B}$. From the classical result of denseness of the Herglotz wave operator \cite{Colton2013}, we can find a sequence of densities $(g_{j,m})_n\in L^2(\mathbb{S}^2)$ such that Herglotz wave function $H(g_{j,m})\rightarrow \Phi_p(x,z_j)$ in $L^2(\partial B)$. We define
   $g^1_{j,m}(\theta)=\theta^{\mathsf T}\theta g_{j,m}(\theta), \theta\in\mathbb{S}^2$, then it is clear
   that $u^{inc}_{j,m}=H(g^1_{j,m})\cdot q$ tends to $\nabla_x\nabla_x^{\mathsf{T}}\Gamma(x,z_j)\cdot q$
   in $(L^2(\partial B))^{3\times3}$. That is , for each fixed $j\in \mathbb{N}$ there is a sequence $(u^{inc}_{j,m})$ such that
\begin{equation}\label{in2}
   u^{inc}_{j,m}\rightarrow\frac{\Gamma^p(x,z_j)\cdot q}{||\nabla_x\nabla_x^{\mathsf{T}}\Phi_p(x,z_j)\cdot q||_{L^2(\partial D)}},\qquad \nabla  u^{inc}_{j,m}\rightarrow \nabla(\frac{\Gamma^p(x,z_j)\cdot q}{||\nabla_x\nabla_x^{\mathsf{T}}\Phi_p(x,z_j)\cdot q||_{L^2(\partial D)}}),\quad m\rightarrow\infty
\end{equation}
uniformly on $\overline{(D\cup\tilde{D})}$. Also, we denote $\Gamma_j(x, z_j)$ as $\nabla_x\nabla_x^{\mathsf{T}}\Gamma(x,z_j)\cdot q$.
 Since the $u^{inc}_{j,m}$ are linear combinations of plane waves, the corresponding scattered wave $u_{j,m}$ and $\tilde{u}_{j,m}$
  for the complex scatterers $(D,\rho,D_b)$ and $(\tilde{D},\tilde{\rho},\tilde{D}_b)$ in $G$, so we have
\begin{equation}\label{in3}
 \begin{split}
   u^{inc}_{j,m}+{u}_{j,m}- {v}_{j,m} &=0, \qquad T_e u^{inc}_{j,m}+T_e { u}_{j,m}-T_i {v}_ {j,m}=0,\quad \text{on} \ \partial D \cap  \partial G,\\
 u^{inc}_{j,m}+ \widetilde{u}_{j,m}- \widetilde{v}_{j,m} &=0, \qquad T_e u^{inc}_{j,m}+T_e \widetilde{u}_{j,m}-T_i \widetilde{v}_ {j,m}=0,\quad \text{on}\  \partial \widetilde{D} \cap \partial G,
 \end{split}
\end{equation}
where $({v}_ {j,m}, {u}_{j,m})$ and $(\tilde{{v}}_ {j,m}, \tilde{{u}}_{j,m})$ are the unique solution to the
transmission problem (\ref{eq1})-(\ref{rd}) with respect to the complex scatterers $(D,\rho,D_b)$ and $(\widetilde{D},\widetilde{\rho},\widetilde{D}_b)$ with incident wave $u^{inc}_{j,m}$. Noting that
 \begin{equation}\label{in4}
 \begin{split}
  u^{inc}_{j}+{u}_{j }- {v}_{j } &=0, \qquad T_e u^{inc}_{j }+T_e { u}_{j }-T_i {v}_ {j }=0,\quad \text{on} \ \partial D  ,\\
 u^{inc}_{j}+ \widetilde{u}_{j }- \widetilde{v}_{j } &=0, \qquad T_e u^{inc}_{j }+T_e \widetilde{u}_{j }-T_i \widetilde{v}_ {j }=0,\quad \text{on}\  \partial \widetilde{D}  ,
 \end{split}
\end{equation}
and by the well-posedness of the transmission problem (\ref{eq1})-(\ref{rd}), the transmission conditions
(\ref{in3}) and the convergence
 (\ref{in2}), we obtain that for each $j\in \mathbb{N}$,
\begin{equation}
\nonumber
  u_{j,m}\rightarrow u_j,\qquad \tilde{u}_{j,m}\rightarrow \tilde{u}_j,  \qquad m\rightarrow\infty
\end{equation}
uniformly on compact subsets of $G$. Thus 
\begin{equation}\label{equal}
 u_j=\tilde{u}_j,\qquad \texttt{in}\quad \overline{G},\quad j=1,2,3,...
\end{equation}

Since $z^*\in \partial D\setminus \partial {D_b}$ and $\partial D\in C^2,$ there is a small
smooth $C^2$ domain
 $D_0$ such that $B\cap D\subset D_0\subset {D\setminus\overline{(  \widetilde{D} \cup D_b)}}$.
Define $U_j= v_j, V_j=\widetilde{u}_j+u_j^{inc}$ in $D_0$. Then $(U_j,V_j)$ satisfies the modified
  interior transmission problem as follows:
\begin{alignat}{3}\label{MITP}
   \mu_i\Delta U_j(x)+(\lambda_i+\mu_i)\nabla(\nabla\cdot U_j(x))-U_j(x)=&\rho_1, \qquad \qquad &x\in & D_0,\\
    \mu_e\Delta V_j(x)+(\lambda_e+\mu_e)\nabla(\nabla\cdot V_j(x))-V_j(x)=&\rho_2, \qquad \qquad  &x\in &D_0,\\
     U_j(x)-V_j(x)= &f_1, \qquad \quad \ &x\in &\partial D_0 , \\
     T_i U_j(x)-T_e V_j(x)=&h_1,\qquad \quad \ &x\in &\partial D_0 ,  
\end{alignat} 
with
\begin{align*}
  \rho_1(j)&:= -(\rho(x)\omega^2+1)v_j|_{D_0},\  &\rho_2(j)&:=  -( \omega^2+1) (\widetilde{u }_j+u_j^{inc})|_{D_0} , \\
  f_1(j)&:=(v_j-{\widetilde{u}}_j-u_j^{inc})|_{\partial D_0},\  &h_1(j)&:= (T_i v_j-T_e {\widetilde{u}}_j-T_e u_j^{inc})|_{\partial D_0}.
\end{align*}
From the result $u_j=\tilde{u}_j$ in $\overline{G}$, it is clear that
$f_1(j)=f_2(j)$ on $\Gamma_1:=\partial D_0\cap\partial D$. Since $z^*$ has a positive distance
 from $ \widetilde{D}$, the well-posedness of the transmission problem (\ref{eq1})-(\ref{rd}) implies that
\begin{equation}\label{uniform1}
  ||\widetilde{u}_j||_{H^2(D_0)}\leq C \quad \text{uniformly for}\  j\in \mathbb{N}.
\end{equation}
This means that $\rho_2(j)\in L^2(D_0)$ is uniformly bounded for $j\in \mathbb{N}$. From Corollary \ref{coro}, $\rho_1(j)$
 is uniformly bounded in $  L^2(D_0)$ for $j\in \mathbb{N}$.

We now prove the boundedness of $f_1(j)$ and $h_1(j)$. In this end, we define $W_j=v_j-{\widetilde{u}}_j-u_j^{inc}$.
Then $W_j$ satisfies
\begin{equation}
\nonumber
\Delta_e^* W_j= g_j,\ \ x\in {D\setminus\overline{(  \widetilde{D} \cup D_b)}},\ \ W_j|_{\Gamma_1}=0,
\end{equation} 
where $g_j:=\omega^2({\widetilde{u}}_j+u_j^{inc})-\Delta_e^*v_j \in L^2({D\setminus\overline{( \widetilde{D} \cup D_b)}})$.
 Since Lam\'{e} operator $\Delta^*$ is strong elliptic and $z_j\in \mathbb{R}^3\setminus D$, with help of a local boundary
estimate in \cite{Gilbarg2001} we can obtain that $W_j\in H^2(D\setminus\overline{(  \widetilde{D} \cup D_b)})$ which satisfies
\begin{equation}\label{elliptices}
  ||W_j||_{ H^2(D_0)}\leq C(||W_j||_{L^2({D\setminus\overline{(  \widetilde{D} \cup D_b)}})}+||g_j||_{L^2({D\setminus\overline{(  \widetilde{D} \cup D_b)}})})\leq C'
\end{equation}
uniformly for $j\in \mathbb{N}.$ Since
\begin{equation}
\nonumber
f_1(j):= W_j|_{\partial D_0},\qquad  h_1(j):= [T_e W_j+(T_i-T_e) v_j ]|_{\partial D_0},
\end{equation}
by using (\ref{uniform1}), (\ref{elliptices}) and the fact that $f_1(j)|_{\Gamma_1}=h_1(j)|_{\Gamma_1}=0$,
it's easy to get that $f_1(j)$ and $h_1(j)$ are uniformly bounded in $H^{\frac{1}{2}}(\partial D_0)$
and $H^{-\frac{1}{2}}(\partial D_0)$, respectively, for $j\in \mathbb{N}.$ Thus, by well-posedness of
modified interior transmission problem (\ref{MITP}), Lemma \ref{Mes} shows that
\begin{equation}
\nonumber
||U_j||_{H^1{(D_0)}}+||V_j||_{H^1{(D_0)}}\leq C(||\rho_{1,j}||_{L^2{(D_0)}} +||\rho_{2,j}||_{L^2{(D_0)}}+||f_1||_{H^{(\frac{1}{2}}{(\partial D_0)}}+||h_1||_{H^{-\frac{1}{2}}{(\partial D_0)}})
\end{equation}
which means that
\begin{equation}\label{contra}
 ||u_j^{  inc }||_{H^1{(D_0)}}-||\widetilde{u}_j||_{H^1{(D_0)}}\leq ||V_j||_{H^1{(D_0)}}\leq C
\end{equation}
%begin{equation}\label{contradic}
%_{H^1(D_0)}-||\widetilde{u}_j||_{H^1(D_0}}\leq ||V_j||_{H^1(D_0)} \leq C.
%\end{equation}
However, since $u_j^{  inc }=\frac{\Gamma^p(x,z_j)\cdot q}{||\nabla_x\nabla_x^{\mathsf{T}}\Phi_p(x,z_j)\cdot q||_{L^2(\partial D)}}$,
it's obviously that $||u_j^{  inc }||_{H^1{(D_0)}}\rightarrow\infty$ as $ j\rightarrow \infty$. Then (\ref{contra})
 contradicts with the fact that $||\widetilde{u}_j||_{H^1{(D_0)}}$ is uniformly bounded. Hence, $D=\widetilde{D}$.

\end{proof}

If medium in $D_i$ and $D_e$ are the same, that is $\mu_i=\mu_e=\mu>0$, $\lambda_i=\lambda_e=\lambda$ satisfying $3\lambda+2\mu>0$ and $\rho(x)$, we can also obtain the uniqueness result for Inverse Problem of (\ref{m1})-(\ref{mrd}). But, the density $\rho (x)$ needs to satisfy the following Assumption  $\mathbf{1}$ (see \cite{Yang2017Uniqueness}).
 \begin{assumption}
 There exists an open neighbourhood of $\partial D$, $ \mathcal{O}\Subset D \setminus \overline{D_b}$, and a positive 
 constant $\varepsilon_0>0$ such that $|\rho (x)-1|\geq \varepsilon_0$ for a..e. $x\in \mathit{O}.$
 \end{assumption}

Next, we give the following uniqueness result. 
\begin{theorem}\label{main1} 
  Given $\omega>0$, let $u^{{sc},\infty}(\hat{x},d)$ 
  and $ \widetilde{u}^{{sc},\infty}(\hat{x},d)$ be the far-field patterns to the solutions $u^{sc}(x)$ and 
  $  \widetilde{u}^{sc}(x) $ to the scattering problem $ (\ref{m1})-(\ref{mrd}) $ with respect to scatterers $(D,\rho,D_b)$ and $( \widetilde{D},  \widetilde{\rho}, 
  \widetilde{D_b})$. Then if $u^{{sc},\infty}(\hat{x},d)= \widetilde{u}^{{sc},\infty}(\hat{x},d)$
   for all $\hat{x},d \in \mathbb{S}^2$, then $D=\widetilde{D}$.
\end{theorem}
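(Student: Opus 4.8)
The plan is to reproduce the contradiction scheme of Theorem \ref{main}, but with the interior transmission problem (\ref{ITP})-(\ref{ITP4}) playing the role that the modified problem (\ref{MI1})-(\ref{MI2}) played there. Suppose, for contradiction, that $D\neq\widetilde{D}$. Since the two scatterers differ, I would select a point $z^*\in\partial D\setminus\partial D_b$ lying in the neighbourhood $\mathcal{O}$ provided by Assumption 1, so that on a small one-sided neighbourhood of $z^*$ inside $D$ the contrast $\rho-1$ keeps a definite sign, i.e. either $\rho_*>1$ or $\rho^*<1$. This sign condition is exactly what Lemmas \ref{coercive} and \ref{ITPwell} require, and it replaces the Lam\'e-parameter hypothesis that made the modified problem unconditionally solvable in Theorem \ref{main}. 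I would then set $z_j:=z^*+(\delta/j)\,n(z^*)\in\mathbb{R}^3\setminus\overline{D}$ and take as incident fields the point sources generated by $\nabla_x\Phi_p(x,z_j)$, normalized (as in Theorem \ref{medium}) so that the corresponding total field $v_j$ of the scatterer $(D,\rho,D_b)$ remains uniformly bounded.

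The first substantive step is to pass from the plane-wave hypothesis to the point-source fields. From $u^{{sc},\infty}=\widetilde{u}^{{sc},\infty}$ and Rellich's lemma, the two scattered fields coincide in the unbounded component $G$ of $\mathbb{R}^3\setminus\overline{D\cup\widetilde{D}}$. Approximating $\Phi_p(\cdot,z_j)$ on a large sphere by Herglotz wave functions and invoking the denseness of finite superpositions of plane waves together with the well-posedness of (\ref{m1})-(\ref{mrd}), I would transfer this equality to the point-source incidence, obtaining $v_j^{sc}=\widetilde{v}_j^{sc}$ in $\overline{G}$ for every $j$. Since $z^*$ has positive distance from both $\widetilde{D}$ and $D_b$, Theorem \ref{medium} applied to $D$ and the well-posedness for $\widetilde{D}$ give uniform bounds on $v_j$ and on $\widetilde{v}_j^{sc}$ over the fixed small set constructed next.

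The heart of the argument is the local interior transmission problem. Using $\partial D\in C^2$ and $z^*\in\partial D\setminus\partial D_b$, I would fix a small $C^2$ domain $D_0$ with $B\cap D\subset D_0\subset D\setminus\overline{(\widetilde{D}\cup D_b)}$ whose diameter is so small that $\omega^2<\min\{m\lambda_1(D_0)/\rho^*,\ \lambda_1(D_0)\}$. By Lemma \ref{coercive} this forces the sesquilinear form $a(\cdot,\cdot)$ to be coercive, so $\omega$ is not a transmission eigenvalue of $D_0$ and the well-posedness in Lemma \ref{ITPwell} is available. Setting $U_j:=v_j$ and $V_j:=\widetilde{v}_j^{sc}+u_j^{inc}$, one verifies that $(U_j,V_j)$ solves (\ref{ITP})-(\ref{ITP4}) in $D_0$: indeed $U_j$ obeys $\Delta^* U_j+\omega^2\rho U_j=0$ while, because $\nabla\Phi_p$ and $\widetilde{v}_j^{sc}$ satisfy the background Navier equation on $D_0$, $V_j$ obeys $\Delta^* V_j+\omega^2 V_j=0$. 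The Cauchy data are $f_1=W_j|_{\partial D_0}$ and $h_1=T_e W_j|_{\partial D_0}$, where $W_j:=v_j-\widetilde{v}_j^{sc}-u_j^{inc}$ and the extra stress term present in Theorem \ref{main} drops out because $T_i=T_e$ when the media coincide. The equality $v_j^{sc}=\widetilde{v}_j^{sc}$ in $G$ gives $W_j=0$ on $\Gamma_1:=\partial D_0\cap\partial D$; since $\Delta^* W_j=g_j\in L^2$ with $g_j$ uniformly bounded, the local elliptic boundary estimate of \cite{Gilbarg2001} yields $\|W_j\|_{H^2(D_0)}\le C$ uniformly in $j$, hence $\|W_j\|_{\mathcal{H}(D_0)}$ and the data $f_1,h_1$ are uniformly bounded.

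Finally, Lemma \ref{ITPwell} provides a uniform a priori bound on the transmission solution $(U_j,V_j)$ in terms of $\|W_j\|_{\mathcal{H}(D_0)}$, hence a uniform control of $V_j=\widetilde{v}_j^{sc}+u_j^{inc}$ on $D_0$. Since $\widetilde{v}_j^{sc}$ is uniformly bounded there whereas the normalized source $u_j^{inc}$ concentrates at $z^*\in\partial D_0$ and blows up as $j\to\infty$ (the same blow-up exploited in Theorem \ref{main}), the two are incompatible, and this contradiction yields $D=\widetilde{D}$. I expect two points to require the most care. The structural obstacle is the avoidance of transmission eigenvalues in the equal-media regime: this is what the smallness of $D_0$ secures, through the coercivity of $a(\cdot,\cdot)$ in Lemma \ref{coercive}, and it is why Assumption 1 (the definite sign of $\rho-1$ near $\partial D$) must be imposed. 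The delicate technical point is to run the normalization so that the Cauchy datum $W_j$ stays bounded in $\mathcal{H}(D_0)$ while the a priori estimate of Lemma \ref{ITPwell} is read in a norm strong enough to register the divergence of $u_j^{inc}$ on $D_0$, exactly as the $H^1$-bound does in the proof of Theorem \ref{main}.
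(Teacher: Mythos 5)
Your overall architecture (contradiction, Rellich plus Herglotz approximation to pass to point sources, a well-posed interior transmission problem on a small $D_0$ whose diameter keeps $\omega^2$ below the coercivity threshold of Lemma \ref{coercive}) coincides with the paper's. But there is a genuine gap in the step where you bound the Cauchy data. You set $W_j:=v_j-\widetilde{v}_j^{sc}-u_j^{inc}=u_j-\widetilde u_j$ and claim $\Delta^*W_j=g_j$ with $g_j$ uniformly bounded in $L^2$, so that a local elliptic estimate gives $\|W_j\|_{H^2(D_0)}\le C$. In the equal-media setting this fails: since $\Delta^*u_j+\omega^2\rho u_j=0$ and $\Delta^*\widetilde u_j+\omega^2\widetilde u_j=0$ on $D_0$, one gets $\Delta^*W_j=-\omega^2W_j+\omega^2(1-\rho)u_j$, and the term $(1-\rho)u_j^{inc}=(1-\rho)\nabla_x\Phi_p(\cdot,z_j)$ blows up in $L^2(D_0)$ as $z_j\to z^*\in\partial D_0$ \emph{precisely because} Assumption 1 forces $|\rho-1|\ge\varepsilon_0$ near $\partial D$. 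So $W_j$ is not a uniformly bounded lifting of the data, and the $H^2$ (or $\mathcal{H}(D_0)$) bound you feed into Lemma \ref{ITPwell} is unavailable. The paper's proof avoids this with a device you omit: a cut-off $\chi$ equal to $1$ on a small ball $B_1$ around $z^*$ and supported in $B$, and the lifting $u_{0j}:=(1-\chi)(u_j-\widetilde u_j)$, which still carries the traces $f_j,h_j$ on $\partial D_0$ (because $u_j-\widetilde u_j=0$ on $\Gamma_1$ and $\chi=0$ on the rest of $\partial D_0$) but kills the singular region, so that only $\|u_j^{inc}\|_{L^2(D_0\setminus\overline{B_1})}\le C$ is needed to verify condition $(\mathbf{C})$ with $\|u_{0j}\|_{\mathcal{H}(D_0)}\le C$.

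A second, related problem is your normalization. The paper does \emph{not} normalize the source in Theorem \ref{main1}: it takes $u_j^{inc}=\nabla_x\Phi_p(\cdot,z_j)$ as is, obtains from Lemma \ref{ITPwell} the uniform bound $\|V_j\|_{L^2(D_0)}=\|\widetilde u_j\|_{L^2(D_0)}\le C$, and contradicts this with the explicit computation $\|\nabla_x\Phi_p(\cdot,z_j)\|_{L^2(D_0)}^2\gtrsim\int_{D_0}|x-z_j|^{-4}\,\mathrm{d}x\to\infty$ while $\widetilde u_j^{sc}$ stays bounded. If, as you propose, you normalize so that the total field $v_j$ is uniformly bounded, then the source is bounded in the very $L^2(D_0)$ norm that Lemma \ref{ITPwell} controls (unlike Theorem \ref{main}, whose Lemma \ref{Mes} gives an $H^1$ bound against an $L^2(\partial D)$ normalization, leaving room for $H^1$ blow-up), and the final contradiction evaporates. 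You flag this tension yourself but do not resolve it; the resolution is to drop the normalization and use the cut-off lifting instead. Also note that Theorem \ref{medium} involves no normalization, so citing it for that purpose is not accurate.
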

\begin{proof}
  Assume that $D\neq \widetilde{D}$. Without loss of generality, choose $z^*\in \partial D\setminus \partial {D_b}$ and define
 \begin{equation}
  \nonumber
 z_j:=z^*+ \frac{\delta}{j} n(z^*), \qquad j=1,2,3,...  
  \end{equation}  
with a small enough $\delta>0$ such that $z_j\in B$, where $B$ denotes a small ball centered at $z^*$
such that $B\cap(\tilde{D}\cup D_b)= \emptyset.$

Consider the transmission problem (\ref{m1})-(\ref{mrd}) with the boundary data $f$ and $h$ induced by
the incident point source
\begin{equation}
\nonumber
 u_j^{inc}=\nabla_x \Phi_p(x,z_j),\qquad j=1,2,3,...
\end{equation}
where $\Phi_p(x,z_j)$ is the fundamental solution to the three-dimensional Helmholtz equation given by
\begin{equation}
\nonumber
\Phi_p(x,z_j)= \frac{1}{4\pi}\frac{e^{ik_e^p|x-z_j|}}{|x-z_j|},\qquad x\neq z_j.
\end{equation} 
Let $u_j$ and $\tilde{u}_j$ be the unique solution to the scattering problem (\ref{m1})-(\ref{mrd}) with respect to
scatterers $(D,\rho,D_b)$ and $( \widetilde{D},  \widetilde{\rho},\widetilde{D_b})$, respectively, corresponding to the incident wave $u^{inc}(x)=u_j^{inc}(x)$. As assumption $u^{sc,\infty}(\hat{x},d)=\tilde{u}^{sc,\infty}(\hat{x},d)$ for all $\hat{x},d \in \mathbb{S}^2$, we can use Rellich's lemma \cite[Theorem 2]{Peter1993A} to get
\begin{equation}
 u^{sc}(x,d)=\tilde{u}^{sc}(x,d) \qquad \text{ in}\  \overline{G}
\end{equation}
for all $d \in \mathbb{S}^2$, where $G$ denotes the unbounded component of $\mathbb{R}^3\setminus\overline{ (D\cup\tilde{D})} $. Similarly as in the proof of (\ref{equal}) in Theorem \ref{main}, we obtain
\begin{equation} 
 u^{sc}_j(x)=\tilde{u}^{sc}_j(x),\qquad \texttt{in}\quad \overline{G},\quad j=1,2,3,...
\end{equation}

Since $z^*\in \partial D\setminus \partial {D_b}$ and $\partial D\in C^2,$ there is a small
smooth $C^2$ domain $D_0$ such that $B\cap D\subset D_0\subset {D\setminus\overline{(\widetilde{D} \cup D_b)}}$.
Define $U_j= u_j, V_j=\widetilde{u}_j$ in $D_0$. Then $(U_j,V_j)$ satisfies the interior transmission problem as follows:
\begin{alignat}{3} 
   \Delta^* U_j(x)+\omega^2\rho(x)U_j(x)=&0, \qquad \qquad &\texttt{in}\ &D_0,\\
 \Delta^* V_j(x)+\omega^2 V_j(x)=&0, \qquad \qquad  &\texttt{in}\ &D_0,\\
   U_j(x)-V_j(x)= &f_j, \qquad \quad \ &\texttt{on}\ &\partial D_0, \\
    T_e U_j(x)-T_e V_j(x)=&h_j,\qquad \quad \ &\texttt{on}\  &\partial D_0,  
\end{alignat}
with
\begin{equation}
\nonumber
 f_j:=(u_j-{\widetilde{u}}_j)|_{\partial D_0},\  h_j:= (T_e u_j-T_e {\widetilde{u}}_j)|_{\partial D_0}.
\end{equation}
From the result $u^{sc}_j=\tilde{u}^{sc}_j$ in $\overline{G}$, it is clear that
$f_j=h_j$ on $\Gamma_1:=\partial D_0\cap\partial D$.  

Next, we prove that $f_j, h_j$ satisfy the condition $(\mathbf{C})$ (see Lemma \ref{ITPwell}). To this end, we choose 
a cut-off function $\chi(x)\in C_0^{\infty}(\mathbb{R}^3)$ such that 
\begin{equation}
\nonumber
\chi(x)=\begin{cases}
0,\qquad & x\in \mathbb{R}^3\setminus B,\\
1,\qquad & x\in  B_1,
\end{cases}
\end{equation}
where $B_1$ is a small ball centered at $ z^* $ satisfying that $B_1\subsetneq B$. Define the function
\begin{equation}
\nonumber
u_{0j}(x):=[1-\chi(x)](u_j-{\widetilde{u}}_j)(x).
\end{equation}
Then from well-posedness of the problem (\ref{m1})-(\ref{mrd}), we know that $u_{0j}\in  \mathcal{H}(D_0)$ for all
$j\in \mathbb{N}$ with $u_{0j} \in \mathcal{H}(\Omega)$ such that  $u_{0j}|_{\partial D_0}=f_j$ and  $T_eu_{0j}|_{\partial D_0}=h_j$. 

We now prove $ ||u_{0j}||_{\mathcal{H}(D_0)} $ is uniformly bounded for $j\in \mathbb{N}$. Since $z^*$ has a positive 
distance from $ \tilde{D} $ and scattering problem (\ref{m1})-(\ref{mrd}) is well-posed, we can get
\begin{equation}
||\widetilde{u}^{sc}_j||_{\mathcal{H}(D_0)}\leq C
\end{equation}
uniformly for $j\in \mathbb{N}$. Also from Theorem \ref{medium}, we have
\begin{equation}
||{u}^{sc}_j||_{{H}^1(D_0)}\leq C
\end{equation}
uniformly for $j\in \mathbb{N}$. Combine above two equalities, we have
\begin{equation}\label{uniform}
||u_{0j}(x)||_{{H}^1(D_0)}\leq ||{u}^{sc}_j-\widetilde{u}^{sc}_j||_{{H}^1(D_0)}\leq C.
\end{equation}
Then, we need to prove that $\Delta^*u_{0j}$ is uniformly bounded in $(L^2(D_0)^3$. By direct calculation, it is 
found that
\begin{equation} 
\begin{split}
 \Delta^*u_{0j}&=a(x)\Delta^*b(x)+(\lambda+3\mu)\nabla a(x)\cdot\nabla b(x)^{\mathsf T}+
  (\mu\Delta a(x)\\
  &+(\lambda+\mu)\nabla(\nabla a(x))^{\mathsf T})\cdot b(x)^{\mathsf T}+(\lambda+\mu)(\nabla\cdot b(x))\nabla a(x) 
\end{split}
\end{equation}
where $a(x):=1-\chi(x), b(x):=u_j(x)-\widetilde{u}_j(x)$. Since $ || u_j^{inc}||_{{L}^2(D_0)\setminus \overline{B_1}}\leq C$, it is easy to derive that
\begin{equation} 
\begin{split}
\Delta^*(u_j-\widetilde{u}_j)=\omega^2({u}^{sc}_j-\widetilde{u}^{sc}_j)+(1-\rho(x))\omega^2 {u}_j 
\end{split}
\end{equation}
is uniformly bounded in $ {L}^2(D_0\setminus \overline{B_1}) $ for $j\in \mathbb{N}$ from (\ref{uniform}) and Theorem \ref{medium}. This, together with $\chi(x)\in C_0^{\infty}(\mathbb{R}^3)$ and $ \chi(x)|_{B_1}=0 $ , it is easy to get 
$ ||\Delta^*u_{0j}||_{{L}^2(D_0)}\leq C$ uniformly for $j\in \mathbb{N}$. Then $ u_{0j} $ is uniformly bounded in $\mathcal{H}(D_0)$. Thus $f_j, h_j$ satisfy the condition $(\mathbf{C})$ with $ u_{0j}\in \mathcal{H}(D_0) $ for all
 $j\in \mathbb{N}$.
  
Since the smallest Dirichlet eigenvalue $\lambda_1(D_0)$ of $-\bigtriangleup$ in $D_0$ tends to $+\infty$ as the diameter
of $D_0$ goes to zero. Then we can choose $D_0$ small enough such that  $0<\omega^2<min\lbrace\frac{m \lambda_1(\Omega)}{\rho^*},  m \lambda_1(\Omega)\rbrace$. This together with Lemma \ref{ITPwell} shows that 
$||V_j||_{{L}^2(D_0)}=||\widetilde{u}_j||_{{L}^2(D_0)}\leq C$  uniformly for $j\in \mathbb{N}$. Then
\begin{equation}\label{contract}
||\nabla_x \Phi_p(x,z_j)||_{{L}^2(D_0)}-||\widetilde{u}^{sc}_j||_{{L}^2(D_0)}=||\widetilde{u}_j||_{{L}^2(D_0)}\leq C
\end{equation}
 uniformly for $j\in \mathbb{N}$. But 
 \begin{equation}
 \nonumber
 \begin{split}
 ||\nabla_x \Phi_p(x,z_j)||^2_{{L}^2(D_0)}&=\int_{D_0}|(\frac{1}{|x-z_j|
}-ik_p)\frac{e^{ik_p|x-z_j|}}{4\pi|x-z_j|}\cdot \frac{x-z_j}{|x-z_j|}|^2 \ \mathrm{d}x\\
&\geq (4\pi)^{-2}\int_{D_0}\frac{1}{|x-z_j|^4}\ \mathrm{d}x\\
&=\textit{O}(j).
\end{split}
\end{equation}
This is a contradiction to (\ref{contract}) when $j\rightarrow \infty$. Then, $D=\tilde{D}$.
\end{proof}

\begin{remark}(i) Theorem \ref{main} and Theorem \ref{main1} are right when the incident plane waves $u^{inc}(x)=d e^{ik_e^s d\cdot x},\quad d\in  \mathbb{S}^2$; and for objects $D_b $ with Neumann boundary condition $(T_iv=0$ on $\partial D_b)$, the Theorems are also right.\\
(ii) Besides, it's easy to get the unique determination of obstacle in two-dimension case in the same way.
\end{remark}

\appendix
 \renewcommand{\appendixname}{Appendix~\Alph{section}}
\section{System of singular integral equations}

We sketch the theory and introduce some important results; see  for details. Consider a linear singular integral system as
\begin{equation}\label{A.1}
 \textbf{L}(u)(x)=A(x)u(x)+\int_{S}B(x,x-y)u(y){\rm d}S(y)+\textbf{T}(u)(x)=F(x)
\end{equation}
where $A(x)=||a_{ij}(x)||_{3\times3}\in L^p(S)$, $B(x,y)=||b_{ij}(x,y)||_{3\times3}$ is the matrix of singular kernels and $\textbf{T}$ is the matrix of compact (weakly-singular) operators on smooth surface $S$. Then
\begin{equation}\label{A.2}
  \chi(x,\theta)=A(x)+B(x,\frac{x-y}{|x-y|}), \ \ (x,y)\in S\times S
\end{equation}
will be called the characteristic(matrix) of $\textbf{L}$. $\theta$ is the argument of the point $\frac{x-y}{|x-y|}$ with
$\theta\in (-\pi,\pi].$

Expand the characteristic in Fourier series
\begin{equation}
\nonumber
\chi(x,\theta)=\Sigma_{-\infty}^{+\infty} A_n(x)e^{in\theta} .
\end{equation} 

Obviously, $A_0(x)=A(x).$ Then, the symbol(matrix) of $\textbf{L}$, $\sigma(\textbf{L})$, is defined by
\begin{equation}\label{A5}
  \sigma(\textbf{L})=\Sigma_{-\infty}^{+\infty} A^*_0(x)e^{in\theta}
\end{equation}
where
\begin{equation}
\nonumber
 A^*_n(x)=\frac{2\pi i^{|n|}}{|n|}A_n(x) \ \text{for}\  n\neq0,\  \text{and}\  A^*_n(x)=A_0(x)=A(x).
\end{equation}

As compact terms in $\textbf{L}$ do no contribution to $\sigma(\textbf{L})$. The following results can be found in Mikhlin \cite[Theorem 2.34-Theorem 2.36] {Mikhlin1965}.
\begin{theorem}\label{th5.1}
  Suppose operator $\textbf{L}$ is bounded in $L^{p}(S)$, and its symbol satisfies
\begin{equation}
  \nonumber
   \mathrm{inf}| \mathrm{det}\ \sigma(\textbf{L})|>0,
  \end{equation} 
  then $\textbf{L}$ is of normal (or regular) type.
\end{theorem}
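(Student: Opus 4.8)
The plan is to establish the theorem by constructing a two-sided regularizer (parametrix) for $\textbf{L}$ through the symbolic calculus, thereby exhibiting $\textbf{L}$ as a Noether operator on $L^p(S)$ -- which is precisely what being of normal type asserts. The argument rests on three structural properties of the symbol map $\sigma$ defined in (\ref{A5}) on the algebra of operators of the form (\ref{A.1}), all of which I would quote from Mikhlin \cite{Mikhlin1965}: that $\sigma$ is multiplicative modulo compact operators, $\sigma(\textbf{L}_1\textbf{L}_2)=\sigma(\textbf{L}_1)\,\sigma(\textbf{L}_2)$; that a compact operator has vanishing symbol while the identity has symbol equal to the identity matrix $I$; and that every admissible matrix symbol is realized, up to a compact perturbation, by some operator of the form (\ref{A.1}).

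First I would use the hypothesis $\inf|\det\sigma(\textbf{L})|>0$ to conclude that $\sigma(\textbf{L})(x,\theta)$ is invertible at every $(x,\theta)\in S\times(-\pi,\pi]$ and that, by Cramer's rule, its pointwise inverse $[\sigma(\textbf{L})]^{-1}$ is again a bounded admissible symbol: each cofactor entry is bounded and the determinant is bounded away from zero. Using the realization property I would then build an operator $\textbf{R}$ of the form (\ref{A.1}) with $\sigma(\textbf{R})=[\sigma(\textbf{L})]^{-1}$, whose $L^p(S)$-boundedness follows from Mikhlin's boundedness criterion applied to this admissible symbol.

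The regularization is then immediate from multiplicativity. The compositions satisfy
\begin{equation}
\nonumber
\sigma(\textbf{R}\textbf{L})=[\sigma(\textbf{L})]^{-1}\,\sigma(\textbf{L})=I,\qquad \sigma(\textbf{L}\textbf{R})=\sigma(\textbf{L})\,[\sigma(\textbf{L})]^{-1}=I,
\end{equation}
so that both $\textbf{R}\textbf{L}-I$ and $\textbf{L}\textbf{R}-I$ have vanishing symbol and are therefore compact on $L^p(S)$. Hence $\textbf{R}$ is a two-sided regularizer of $\textbf{L}$, and the standard Noether theory yields that $\textbf{L}$ has finite-dimensional kernel and cokernel and closed range; this is exactly the assertion that $\textbf{L}$ is of normal (regular) type.

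I expect the main obstacle to lie not in the formal symbol algebra but in its two analytic underpinnings, which I would cite rather than reprove: the Calder\'on--Zygmund--Mikhlin $L^p(S)$-boundedness of singular operators with admissible symbols on the smooth surface $S$, and the dictionary relating vanishing of the symbol to compactness and relating admissible symbols to operators of the form (\ref{A.1}) modulo compact terms. The delicate point is to verify that $[\sigma(\textbf{L})]^{-1}$ genuinely remains in the admissible class -- one must check that its dependence on $x$ retains the regularity required in (\ref{A.1}) and that its Fourier expansion in $\theta$ decays fast enough for the realization step -- and it is here that the uniform bound $\inf|\det\sigma(\textbf{L})|>0$, rather than mere pointwise invertibility, is essential.
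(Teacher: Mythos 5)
The paper offers no proof of this statement at all: it is quoted verbatim as a known result from Mikhlin \cite{Mikhlin1965} (Theorems 2.34--2.36), so there is no in-paper argument to compare against. Your sketch is the standard regularizer construction that underlies Mikhlin's proof --- invert the symbol pointwise using $\inf|\det\sigma(\textbf{L})|>0$, realize the inverse symbol as an operator $\textbf{R}$ of the form (\ref{A.1}), and use multiplicativity of $\sigma$ modulo compacts to conclude that $\textbf{R}$ is a two-sided regularizer, whence $\textbf{L}$ is Noether --- and you correctly isolate the one genuinely delicate point, namely that $[\sigma(\textbf{L})]^{-1}$ stays in the admissible symbol class; so the proposal is correct and consistent with the cited source.
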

If operator $\textbf{L}$ is a normal operator from $L^{p}(S)$ into $L^{p}(S)$, $p>1$, then its adjoint
 operator $\textbf{L}^*$ is also a normal operator from $L^{p'}(S)$ into $L^{p'}(S)$,
 $\frac{1}{p}+\frac{1}{p'}=1.$ The index of $\textbf{L}$ is defined as
\begin{equation}
 \nonumber
 \mathrm{ind}\textbf{L}= \mathrm{dim}\mathrm{N}(\textbf{L})-\mathrm{dim}\mathrm{N}(\textbf{L}^*)
 \end{equation}
where $\mathrm{N}(\textbf{L})$ and $\mathrm{N}(\textbf{L}^*)$ are zero spaces of the operators $\textbf{L}$
and $\textbf{L}^*$, respectively.

  Consider the operator
 \begin{equation}
  \nonumber
   \textbf{L}_\lambda(u)(x)=A(x)u(x)+\lambda\int_{S}B(x,x-y)u(y){\rm d}S(y)
  \end{equation} 
where $\lambda$ is a complex number, while $A,B,u$ and $S$ are the same as above. Then we have a result about its index\cite[Ch.\uppercase\expandafter{\romannumeral 4}, Theorem 6.1-6.7]{Kupradze}.
\begin{theorem}\label{th5.2}
If $\Gamma$ is a continuous curve on the complex plane connecting the origin of coordinates with the point $\lambda_0$ and $\textbf{L}$ is a normal type operator for any $\lambda\in \Gamma$, then
\begin{equation}
\nonumber
\mathrm{ind}\mathbf{L}_{\lambda_0}=0.
\end{equation}
\end{theorem}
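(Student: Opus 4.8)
The plan is to prove this by the homotopy invariance of the index for operators of normal (Noether) type. Since $\Gamma$ connects the origin to $\lambda_0$ and, by Theorem \ref{th5.1}, the normal-type hypothesis makes $\textbf{L}_\lambda$ a Noether operator with a well-defined finite index at every point of $\Gamma$, it suffices to show that $\mathrm{ind}\,\textbf{L}_\lambda$ is constant along $\Gamma$ and to evaluate it at the endpoint $\lambda=0$.

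First I would anchor the argument at $\lambda=0$. There the singular term drops out, so $\textbf{L}_0$ reduces to pure multiplication by the matrix $A(x)$, and its characteristic—hence its symbol—coincides with $A(x)$ itself. The normal-type condition at $\lambda=0$ then reads $\inf_{x\in S}|\det A(x)|>0$, which renders $A(x)$ boundedly invertible for a.e. $x\in S$. Consequently $\textbf{L}_0$ is an isomorphism of $L^p(S)$ onto itself, with inverse given by multiplication by $A(x)^{-1}$, and therefore $\mathrm{ind}\,\textbf{L}_0=0$.

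Next I would establish constancy of the index along $\Gamma$. The assignment $\lambda\mapsto\textbf{L}_\lambda$ is affine and Lipschitz in the operator norm, since $\textbf{L}_\lambda-\textbf{L}_\mu=(\lambda-\mu)\textbf{B}$, where $\textbf{B}$ denotes the bounded singular-integral part $u\mapsto\int_{S}B(x,x-y)u(y)\,{\rm d}S(y)$. Invoking the stability of the Noether index under small bounded perturbations (the classical Atkinson-type theorem, in the singular-integral setting of \cite{Mikhlin1965,Kupradze}), each point of $\Gamma$ admits a neighbourhood on which the integer-valued function $\lambda\mapsto\mathrm{ind}\,\textbf{L}_\lambda$ is unchanged. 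Thus this function is locally constant on $\Gamma$; as $\Gamma$ is connected and $\textbf{L}_\lambda$ is normal type throughout, the index is globally constant, whence $\mathrm{ind}\,\textbf{L}_{\lambda_0}=\mathrm{ind}\,\textbf{L}_0=0$.

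The delicate point is the local-constancy step: it rests on the perturbation-stability theory for normal-type singular integral operators, whose applicability in turn requires the uniform lower bound $\inf|\det\sigma(\textbf{L}_\lambda)|>0$ to persist under small variations of $\lambda$. The hypothesis that $\textbf{L}_\lambda$ is of normal type for \emph{every} $\lambda\in\Gamma$ is precisely what secures a well-defined finite index at each point and permits the stability result to be applied uniformly along the curve; the explicit computation at $\lambda=0$ then pins the common value to zero.
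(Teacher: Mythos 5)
The paper does not actually prove this statement: it is quoted as a known result, with the proof delegated to the cited reference (Kupradze, Ch.~IV, Theorems 6.1--6.7; see also Mikhlin), so there is no in-text argument to compare yours against. Your proposal reconstructs precisely the classical argument used in those sources: at $\lambda=0$ the operator degenerates to multiplication by $A(x)$, whose symbol is $A(x)$ itself, so the normal-type condition $\inf|\det A|>0$ (together with boundedness of $A$) makes $\textbf{L}_0$ invertible with inverse given by multiplication by $A^{-1}=\mathrm{adj}(A)/\det A$, hence of index zero; and since $\textbf{L}_\lambda-\textbf{L}_\mu=(\lambda-\mu)\textbf{B}$ with $\textbf{B}$ bounded on $L^p(S)$, the Atkinson-type stability of the index under small norm perturbations makes $\lambda\mapsto\mathrm{ind}\,\textbf{L}_\lambda$ locally constant, hence constant on the connected set $\Gamma$, which pins $\mathrm{ind}\,\textbf{L}_{\lambda_0}$ to the value $0$ computed at the origin. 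Both steps are sound; the only point worth stating explicitly is that the normal-type hypothesis at every $\lambda\in\Gamma$ is what guarantees the index is defined and finite along the whole curve, so that local constancy can be chained across $\Gamma$ --- which you do note. In short, the proof is correct and coincides with the argument of the cited source.
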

The main result of the theory is that if $\mathrm{ind}\mathbf{L}=0$, then the Fredholm theorems hold, which
means system(\ref{A.1}) has a unique solution.

\section{Symbolic}
In this section, we will prove that the system (\ref{ind}) in Section 2 is normally solvable and Fredholm theorems hold for it.
Consider the problem
\begin{align*}
 \mu_i\Delta v(x)+(\lambda_i+\mu_i)\nabla(\nabla\cdot v(x))+ \rho\omega^2v(x)=&0, &&x\in D_i,\\
 \mu_e\Delta u(x)+(\lambda_e+\mu_e)\nabla(\nabla\cdot u(x))+ \omega^2 u(x)=&0,  &&x\in D_e,\\
 v(x)-u(x)=&F(x),    &&x\in \partial D , \\
 T^{\gamma_i}_iv(x)-T^{\gamma_e}_eu(x)=&H(x), &&x\in \partial D , \\
 v(x)= &0,  &&x\in\partial D_b,\\
 \lim_{r\rightarrow \infty }r[\frac{\partial u_\alpha(x)}{\partial r}-i k_e^\alpha u_\alpha(x)]=&0,&& \alpha=p,s,
\end{align*}
we seek solutions of problem in the form
\begin{equation}\label{app1}
\begin{split}
    v(x)=&\int_{\partial D_b}T^{\kappa_i}_{i,y}[\Gamma_i(x,y)]^{\mathsf T}\eta(y){\rm d}s(y)+ \int_{\partial D}\Gamma_i(x,y)\alpha_i\psi(y){\rm d}s(y)\\
      &\ +\int_{\partial D}T^{\kappa_i}_{i,y}[\Gamma_i(x,y)]^{\mathsf T}\beta_i\varphi(y){\rm d}s(y),\qquad x\in  D_i\\
    u(x)=&\int_{\partial D}\Gamma_e(x,y)\alpha_e\psi(y){\rm d}s(y)+\int_{\partial D}T^{\kappa_e}_{e,y}[\Gamma_e(x,y)]^{\mathsf T}\beta_e\varphi(y){\rm d}s(y),\qquad x\in  D_e
\end{split}
\end{equation}
where $\alpha_r,\beta_r$ and $\kappa_r(r=i,e)$ are constants the same as in (\ref{ab}), $\varphi(y),
\psi(y)$ and $\eta(y)$ are the unknown vectors. Taking account into boundary conditions and the properties
of single- and double-layer potentials, we obtain the following system of integral equations
\begin{equation}\label{B}
  \begin{pmatrix}
    -\frac{1}{2}+W_{ii}^{\kappa_i} & \alpha_iS_{ei}^{i} &  -\alpha_iW_{ei}^{\kappa_i,i}\\
    W_{ie}^{\kappa_i} & \alpha_iS^i_{ee}-\alpha_eS_{ee}&  -\frac{1}{2}+(\alpha_eW_{ee}^{\kappa_e}-\alpha_iW_{ee}^{\kappa_i,i})  \\
    V^{\gamma_i,\kappa_i,i}_{ie}&  -\frac{1}{2}+(\alpha_iW_{ee}^{'\gamma_i,i}-\alpha_eW_{ee}^{'\gamma_e}) & \alpha_eV^{\gamma_e,\kappa_e}_{ee}-\alpha_iV^{\gamma_i,\kappa_i,i}_{ee}
  \end{pmatrix}\cdot\begin{pmatrix}
                      \eta \\
                      \psi \\
                      \varphi
                    \end{pmatrix}=\begin{pmatrix}
                                    0 \\
                                    F \\
                                    H
                                  \end{pmatrix}
\end{equation}

In the following, we shall consider the symbolic matrix $\sigma$ of the system (\ref{B1}). Since operators
$ S_{ei}^{i}, W_{ei}^{\kappa_i,i}, W_{ie}^{\kappa_i}$ and $V^{\gamma_i,\kappa_i,i}_{ie}$ are compact with zero symbols. Moreover, $S^i_{ee}$ and $S_{ee}$ are weak singular operators with symbols in  $L^{p}(\partial D), p>1.$  Note that $\sigma$ has the form
\begin{equation}
\nonumber
\sigma=\begin{pmatrix}
           \sigma_{11} & 0 & 0 \\
           0 & 0 &\sigma_{23} \\
           0 & \sigma_{32} & \sigma_{33}
         \end{pmatrix}
\end{equation}
where $\sigma_{kj}$ are also $3\times3$ symbol matrix of the singular operators in (\ref{B}), respectively.
It's obvious that $\sigma_{33}$ is not useful for the determinant of the $\sigma$, so we will only calculate the symbol of $L_1= -\frac{1}{2}+W_{ii}^{\kappa_i},
L_2= -\frac{1}{2}+(\alpha_eW_{ee}^{\kappa_e}-\alpha_iW_{ee}^{\kappa_i,i})$ and
$L_3= -\frac{1}{2}+(\alpha_iW_{ee}^{'\gamma_i,i}-\alpha_eW_{ee}^{'\gamma_e}).$ Then,we first introduce some notations which will be useful.

Kelvin’s matrix $\Gamma_{0i}(x,y)$ and $\Gamma_{0e}(x,y)$ are fundamental solutions to
\begin{align*}
  \mu_i\Delta v(x)+(\lambda_i+\mu_i)\nabla(\nabla\cdot v(x)) =&0, \\
 \mu_e\Delta u(x)+(\lambda_e+\mu_e)\nabla(\nabla\cdot u(x)) =&0.   \\
\end{align*}
respectively. Then denote $\Gamma_{1i}(x,y)$ and $\Gamma_{1e}(x,y)$ as
\begin{align*}
 \Gamma_{1i}(x,y)&=\Gamma_{i}(x,y)-\Gamma_{0i}(x,y),  \\
 \Gamma_{1e}(x,y)&=\Gamma_{e}(x,y)-\Gamma_{0e}(x,y).
\end{align*}
Then by virtue of Theorem II, 1.3 and 1.5 in \cite{Kupradze}, each elements and their first derivatives in matrix $\Gamma_{1i}(x,y)$ and $\Gamma_{1e}(x,y)$ are bounded. Theorem II, 1.3
\cite{Kupradze} shows that second derivatives of the elements of the matrix $\Gamma_{1i}(x,y)$ and $\Gamma_{1e}(x,y)$ with respect to the Cartesian coordinates of $x$ has a singularity of form $l/|x-y|$. Then we know that all single- and double-layer operators with kernels replaced by $\Gamma_{1i}(x,y)$ and $\Gamma_{1e}(x,y)$ respectively are compact operators,
which means they have zero symbols.

For operator $L_2= -\frac{1}{2}+(\alpha_eW_{ee}^{\kappa_e}-\alpha_iW_{ee}^{\kappa_i,i})$, we know that kernels $T^{\kappa_e}_{e,y}[\Gamma_e(x,y)]^{\mathsf T}$, $T^{\kappa_i}_{i,y}[\Gamma_i(x,y)]^{\mathsf T}$ have the same symbols with $T^{\kappa_e}_{e,y}[\Gamma_{0e}(x,y)]^{\mathsf T}, T^{\kappa_i}_{i,y}[\Gamma_{0i}(x,y)]^{\mathsf T}$, respectively. Then
\begin{equation} \label{B2}
  \begin{split}
      (T^{\kappa_e}_{e,y}[\Gamma_{0e}(x,y)]^{\mathsf T})_{mn}= &- \frac{\delta_{mn}}{2\pi}\sum_{l=1}^3\frac{n_l(y)(x_l-y_l)}{|x-y|^3}+\sum_{l=1}^3
  \mathcal{M}_{ml}(\partial_y)\\
   \times & [(\kappa_e\lambda'_e-\mu_e\mu'_e)\frac{\delta_{ln}}{|x-y|}+
   \mu'_e(\kappa_e+\mu_e)\frac{(x_l-y_l)(x_n-y_n)}{|x-y|^3}],\ m,n=1,2,3,
  \end{split}
\end{equation}

where $\mathcal{M}_{ml}(\partial_y)=n_n(y)\frac{\partial}{\partial y_m}-n_m(y)\frac{\partial}{\partial y_n}$, defined the same as in Section
2. Moreover,\\ $\lambda'_e=(\lambda_e+3\mu_e)[4\pi\mu_e(\lambda_e+2\mu_e)]^{-1}$ and
$\mu'_e=(\lambda_e+\mu_e)[4\pi\mu_e(\lambda_e+2\mu_e)]^{-1}$. Since boundary $\partial D$ is $\mathcal{C}^2$ smooth, we have the unit normal
vector function $n(x)\in \mathcal{C}^1(\partial D)$ and
\begin{equation}\label{B1}
  |n(x)-n(y)|=\mathcal{O}(|x-y|), \ \ |\sum_{l=1}^3 n_l(y)(x_l-y_l)|=\mathcal{O}(|x-y|^2).
\end{equation}
by direct calculation, we have according to (\ref{B2}) and  (\ref{B1})
\begin{equation}\label{B3}
  (T^{\kappa_e}_{e,y}[\Gamma_{0e}(x,y)]^{\mathsf T})_{mn}=E_{mn}(x,y)+
  (\kappa_e\lambda'_e-\mu_e\mu'_e) \mathcal{M}_{mn}(\partial_y)\frac{ 1}{|x-y|}
\end{equation}
where $E_{mn}(x,y)$ has a singularity of type $\frac{ 1}{|x-y|}$. So the same procedure applied to $T^{\kappa_i}_{i,y}[\Gamma_{0i}(x,y)]^{\mathsf T}$, we have
\begin{equation}\label{B4}
  (T^{\kappa_e}_{e,y}[\Gamma_{0e}(x,y)]^{\mathsf T})_{mn}=E'_{mn}(x,y)+
  (\kappa_i\lambda'_i-\mu_i\mu'_i) \mathcal{M}_{mn}(\partial_y)\frac{ 1}{|x-y|}
\end{equation}
where $E'_{mn}(x,y)$ has a singularity of type $\frac{ 1}{|x-y|}$. To calculate the symbol of $\mathcal{M}_{mn}(\partial_y)\frac{ 1}{|x-y|}$,
we use a local coordinate system at $y\in \partial D$, then we have the
formula
\begin{equation}\label{B5}
  x_m=y_m+\sum_{l=1}^3 a_{ml}(y)\eta_l
\end{equation}
where $\eta_1, \eta_2, \eta_3$ are the coordinate of $x$ in the system.

Applying transformation (\ref{B5}), we find that the representation
\begin{equation}
\nonumber
\mathcal{M}_{mn}(\partial_y)\frac{ 1}{|x-y|}=\sum_{l=1}^3\varepsilon_{nml}(a_{l2}\eta_1-a_{l1}\eta_2)|\eta|^3
\end{equation}
where $\varepsilon_{nml}$ is the Levi-Civita symbol, $\eta=(\eta_1,\eta_2,0)$, is valid for $x\in \partial D$.

In view of (\ref{B2})-(\ref{B5}), the characteristic of the operator $L_2$ is
\begin{align*}
  \chi_{mm} =&-\frac{1}{2},\ \ m=1,2,3\\
  \chi_{mn} =&(2\pi)^{-1} p\sum_{l=1}^3\varepsilon_{nml}(a_{l2}\frac{\eta_1}{|\eta|}-a_{l1}\frac{\eta_2}{|\eta|})\\
           =& -\chi_{nm},\ m\neq n,
\end{align*}
where $p=(c_eb_i\mu_i-c_ib_e\mu_e)(b_i\mu_i+b_e\mu_e)^{-1}$ with $c_r=\mu_r(\lambda_r+2\mu_r)^{-1}, b_r=(\lambda_r+\mu_r)(\lambda_r+2\mu_r)^{-1}, r=i,e.$

Now the symbol of the operator can be calculated by (\ref{A5})
\begin{align*}
  (\delta_{23})_{mm} =&-\frac{1}{2},\ \ m=1,2,3\\
  (\delta_{23})_{mn} =&i2\pi \chi_{mn} \ \ m\neq n, \ m,n=1,2,3.
\end{align*}
 Then $\mathrm{det} \delta_{23}= p^2-\frac{1}{4}$. Similarly, for operator $L_1, L_3$, we have $\mathrm{det} \delta_{11}=c_i^2-\frac{1}{4}, \mathrm{det} \delta_{32}= q^2-\frac{1}{4}, q=(\mu_i^2-\mu_e^2+(c_i-c_e)\mu_i\mu_e)(\mu_i+\mu_e)^{-2}.$ Finally, we have the determinant of system (\ref{B})
\begin{equation}
 \nonumber
  |\mathrm{det} \delta|=(\frac{1}{4}-c_i^2)(\frac{1}{4}-p^2)(\frac{1}{4}-q^2)>0.
 \end{equation} 
Thus it follows Theorem \ref{th5.1} and Theorem \ref{th5.2}, system (\ref{B}) is normally solvable and have a zero index, which means Fredholm
theorems valid for it.


\begin{thebibliography}{10}

\bibitem{Adams2003Sobolev}  R.~A. Adams and J.~J.~F. Fournier, Sobolev spaces(Second Ed), Elsevier, Amsterdam, 2003.  

\bibitem{Alves2002on} C.~J. Alves and R.~Kress, On the far-field operator in elastic
  obstacle scattering, {\em IMA J. Appl. Math. \bf67} (2002), 1-21.
  
\bibitem{HP2008}H. Ammari, P. Calmon and E. Iakovleva, 
\newblock Elastic wave propagation in a semi-infinite solid medium,
\newblock {\em SIAM J. Imaging Sci. \bf 1} (2008), 169–187.

\bibitem{Athanasiadis20083D} C.~E. Athanasiadis, V.~Sevroglou and I.~G. Stratis, 3d elastic scattering theorems for point-generated dyadic fields, {\em Math. Methods
   Appl. Sci. \bf31} (2008), 987-1003.
   
\bibitem{Bai2021Effective} Z.~Bai, H.~Diao, H.~Liu, and Q.~Meng, Effective medium theory for embedded obstacles in elasticity with applications to , {\em SIAM J. Appl. Math., \bf78} (2022), 720-749.
   
\bibitem{Bellis2013nature} C.~Bellis, F.~Cakoni and B.~B. Guzina, Nature of the transmission eigenvalue spectrum for elastic bodies, {\em IMA J. Appl. Math. \bf78} (2013), 895-923.

\bibitem{Cakoni2012Transmission}
 F.~Cakoni, A.~Cossonni\`ere and H.~Haddar, Transmission eigenvalues
 for inhomogeneous media containing obstacles, {\em Inverse Probl. Imaging \bf6} (2012), 373-398.

\bibitem{Cakoni2010The}
 F.~Cakoni, D.~Gintides and H.~Haddar, The existence of an infinite
  discrete set of transmission eigenvalues,{\em SIAM J. Math. Anal. \bf42} (2010),
   237-255.

\bibitem{Cakoni2021the}
F. Cakoni, P. Z. Kow and J. N. Wang, The interior transmission
  eigenvalue problem for elastic waves in media with obstacles, {\em Inverse
  Probl. Imaging \bf15} (2021), 445-474.
  
\bibitem{Charalambopoulos2002On}
 A.~Charalambopoulos, On the interior transmission problem in
  nondissipative, inhomogeneous, anisotropic elasticity, {\em  J. Elast. \bf67} 
  (2002), 149-170.

\bibitem{Charalambopoulos2007On} A.~Charalambopoulos, A. Kirsch, K. A. Anagnostopoulos, D. Gintides and K. Kiriaki,
\newblock The factorization method in inverse elastic scattering from penetrable bodies,
\newblock {\em Inverse Problems \bf23} (2006), 27–51.

\bibitem{Charalambopoulos2002smaple} A.~Charalambopoulos, D. Gintides and K. Kiriaki,
\newblock The linear sampling method for the transmission problem in three-dimensional linear elasticity,
\newblock {\em Inverse Problems \bf18} (2002), 547-558.

\bibitem{Colton2013}
 D.~Colton and R.~Kress, Inverse acoustic and electromagnetic
  scattering theory (Third Ed), Springer, 2013.
 
\bibitem{Elschner2010}
  J. Elschner and M. Yamamoto, Uniqueness in inverse elastic scattering with finitely many
incident waves, {\em  Inverse Problems \bf26} (2010), 045005, 8pp.


\bibitem{JP1977}J. T. Fokkema and P. M. Van den Berg,
\newblock  Elastodynamic diffraction by a periodic
rough surface (stress-free boundary),
\newblock {\em J. Acoust. Soc. Am. \bf 62} (1977), 1095–1101.
  
\bibitem{Gilbarg2001}
  D.~Gilbarg and N.~S. Trudinger, Strong Solutions, Elliptic Partial Differential Equations of Second Order, 
  Berlin, 2001, 219-257.  

\bibitem{Gintides2012} D. Gintides and M. Sini, Identification of obstacles using only the scattered P-waves or the scattered S-waves, {\em   Inverse Probl. Imaging \bf6} (2012), 39-55.

\bibitem{Peter1993A}
  P.~H\"{a}hner, A uniqueness theorem in inverse scattering of elastic
  waves, {\em  IMA J. Appl. Math. \bf51}   (1993), 201-215.

\bibitem{Peter2002}
  P.~H\"{a}hner, On uniqueness for an Inverse Problem in inhomogeneous elasticity, {\em  IMA J. Appl. Math. \bf67}   (2002), 127-143.  
 
\bibitem{Hahner_1993}
 P.~H\"{a}hner, A uniqueness theorem for a transmission problem in inverse
  electromagnetic scattering, {\em Inverse Problems  \bf9} (1993), 667-678.

\bibitem{peter1998On}
 P.~H\"{a}hner,  On acoustic,
  electromagnetic, and elastic scattering problems in inhomogeneous media,
  Habilitation Thesis, (1998).

\bibitem{peter1993uniqueness}
  P.~H\"{a}hner and G.~C. Hsiao, Uniqueness theorems in inverse
  obstacle scattering of elastic waves, {\em  Inverse Problems \bf9}  (1993),
   525-534.
   
\bibitem{Hormander1985The}
 L.~Hormander, The analysis of linear partial differential operators,
  Springer-Verlag, 1985.
 
 \bibitem{isakov1988}
 V.~Isakov, On uniqueness of recovery of a discontinuous conductivity
  coefficient, {\em Comm. Pure Appl. Math. \bf41 } (1988), 865-877.


\bibitem{Isakov1990On}
 V.~Isakov, On uniqueness in the inverse transmission scattering problem, {\em Commun. Partial Differ. Equations \bf15 } (1990).
 
\bibitem{Isakov2008On}
 V.~Isakov, On uniqueness in the general inverse transmisson problem, {\em Comm. Math. Phys \bf280 } (2008).

  
\bibitem{Sini2015}
 M. Kar and M. Sini, On the inverse elastic scattering by interfaces using one type of scattered
waves, {\em J. Elasticity \bf118 } (2015), 15-38.

\bibitem{KK1993}
 A.~Kirsch and R.~Kress, Uniqueness in inverse obstacle scattering
  (acoustics), {\em Inverse Problems \bf9 } (1993), 285-299.

\bibitem{Kupradze}
 V.~D. Kupradze, Three-dimensional problems of the mathematical theory of
  elasticity and thermoelasticity, North-Holland, Amsterdam, 1979.

\bibitem{Kwon_2004}
  K.~Kwon, Identification of anisotropic anomalous region in Inverse Problems,{\em Inverse Problems \bf20} (2004), 1117-1136. 
  
\bibitem{Xiao2019}
  J. J. Lai, H. Y. Liu, J. N. Xiao and Y. F. Xu, The decoupling of elastic waves from a weak
formulation perspective, {\em   East Asian Journal on Applied Mathematics \bf9}  (2019),
   241-251. 
   
   \bibitem{Liu2009A}
 X.~Liu and B.~Zhang, A uniqueness result for the inverse electromagnetic scattering problem in a piecewise homogeneous medium, {\em Appl. Anal. \bf 88} (2009), 1339–1355.

\bibitem{Liu2010Direct}
 X.~Liu and B.~Zhang, Direct and inverse obstacle scattering problems
  in a piecewise homogeneous medium, {\em Siam J. Appl. Math. \bf70}
  (2010),  3105-3120.

\bibitem{Liu2012in}
 X.~Liu and B.~Zhang, Inverse scattering by an inhomogeneous penetrable obstacle in a piecewise homogeneous medium, {\em Acta Math. Sci. Ser. B \bf 32} (2012),  1281–1297.
 
 \bibitem{Liu2010uni}
 X. Liu, B. Zhang and G. Hu, Uniqueness in the inverse scattering problem in a piecewise homogeneous medium, {\em Inverse Problems \bf26} (2010), 015002.

\bibitem{Liu2010The}
 X.~Liu, B.~Zhang and J.~Yang, The inverse electromagnetic
  scattering problem in a piecewise homogeneous medium, {\em Inverse Problems \bf26}
  (2010), 1339-1355.
  
\bibitem{Xiao2017}
  H. Y. Liu and J. N. Xiao, Decoupling elastic waves and its applications, {\em J.Differ. Equ. \bf263}  (2017), 4442-4480.

\bibitem{MARTIN1990ON}
 P.~A. Martin, On the scattering of elastic waves by an elastic
  inclusion in two dimensions, {\em Quart. J. Mech. Appl. Math. \bf43 } (1990), 275-291.
  
  
 \bibitem{Mikhlin1965}
S.~G. Mikhlin,   Multidimensional singular integrals and integral
  equations, Pergamon Press, 1965.
  
 \bibitem{muskelishvili} N.~I. Muskhelishvili, Singular integral equations:
 Boundary Problems of Function Theory and their Applications to Mathematical Physics. Groningen-Hollang, 1953.

\bibitem{JH1989} J. H. Rose and N\'{e}d\'{e}lec,
\newblock Elastic wave inverse scattering in nondestructive evaluation,
\newblock {\em PAGEOPH \bf 131} (1989), 715–739.

\bibitem{JS1958} J. W. C. Sherwood, 
\newblock Direct Elastic Imaging of a Small Inclusion,
\newblock {\em Proc. Phys. Soc. \bf 131} (1958), 207–219.

\bibitem{JG2021}
J. Xiang and G. Yan, The uniqueness of the inverse elastic wave scattering problem based on the mixed reciprocity relation,{\em Inverse Probl. Imaging \bf15} (2021), 539-554.

\bibitem{Yang2017Uniqueness}
 J.~Yang, B. Zhang and H.~Zhang, Uniqueness in inverse acoustic and
  electromagnetic scattering by penetrable obstacles with embedded objects, {\em J.Differ. Equ. \bf 265} (2017),  6352-6383.


\end{thebibliography}
\end{document}